\newcommand{\be}{\begin{equation}}
\newcommand{\ee}{\end{equation}}
\newcommand{\beq}{\begin{eqnarray}}
\newcommand{\eeq}{\end{eqnarray}}
\newtheorem{claim}{Claim}[section]
\newtheorem{thm}{Theorem}[section]
\newtheorem{lma}{Lemma}[section]
\newtheorem{prop}{Proposition}[section]
\newtheorem{cor}{Corollary}[section]
\newtheorem{defn}{Definition}[section]
\theoremstyle{remark}
\newtheorem{rem}{Remark}[section]
\numberwithin{equation}{section}
\newcommand*\owedge{\mathpalette\@owedge\relax}
\newcommand*\@owedge[1]{%
  \mathbin{%
    \ooalign{%
      $#1\m@th\bigcirc$\cr
      \hidewidth$#1\m@th\wedge$\hidewidth\cr
    }%
  }%
}
\def\det{\mathrm{det}}
\def\p{\partial}
\def\C{\mathcal{C}}
\def\R{\mathbb{R}}
\def\p{\partial}
\def\lf{\left}
\def\ri{\right}
\def\e{\epsilon}
\def\ol{\overline}
\def\R{\Bbb R}
\def\Pi{\overline{\displaystyle{\mathbb{II}}}}
\def\heat{\lf(\Delta -\frac{\p}{\p t}\ri)}
\def\K{K\"ahler }
\def\heat{\lf(\frac{\p}{\p t}-\Delta\ri)}
\def\lf{\left}
\def\ri{\right}
\def\ol{\overline}
\def\e{\epsilon}
\def\p{\partial}
\def\C{\Bbb C}
\def\R{\Bbb R}
\def\K{K\"ahler\ }
\def\be{\begin{equation}}
\def\ee{\end{equation}}
\def\ijb{i\bar j}
\def\bee
\def\eee{\end{equation*}}
\def\bee{\begin{equation*}}
\def\eee{\end{equation*}}
\def\ol{\overline}
\def\e{\epsilon}
\def\lf{\left}
\def\heat{\lf(\frac{\p}{\p t}-\Delta\ri)}
\def\ijb{i\bar{j}}
\def\ri{\right}
\def\p{\partial}
\def\jbar{{\bar\jmath}}
\def\K{K\"ahler }
\def\KR{K\"ahler-Ricci }
\def\be{\begin{equation}}
\def\ee{\end{equation}}
\def\ol{\overline}
\def\lf{\left}
\def\ri{\right}
\def\e{\epsilon}
\def\ijb{{i\jbar}}
\def\p{\partial}
\def\C{\Bbb C}
\def\p{\partial}
\def\p{\partial}
\def\C{\Bbb C}
\def\bee{\begin{equation*}}
\def\eee{\end{equation*}}
\def\ol{\overline}
\def\e{\epsilon}
\def\lf{\left}
\def\heat{\lf(\frac{\p}{\p t}-\Delta\ri)}
\def\ijb{i\bar{j}}
\def\ri{\right}
\def\p{\partial}
\def\jbar{{\bar\jmath}}
\def\K{K\"ahler }
\def\KR{K\"ahler-Ricci }
\def\be{\begin{equation}}
\def\ee{\end{equation}}
\def\ol{\overline}
\def\lf{\left}
\def\ri{\right}
\def\e{\epsilon}
\def\ijb{{i\jbar}}
\def\p{\partial}
\def\C{\Bbb C}
\def\p{\partial}
\def\p{\partial}
\def\C{\Bbb C}
\def\bee{\begin{equation*}}
\def\eee{\end{equation*}}
\def\ol{\overline}
\def\e{\epsilon}
\def\lf{\left}
\def\heat{\lf(\frac{\p}{\p t}-\Delta\ri)}
\def\ijb{i\bar{j}}
\def\ri{\right}
\def\p{\partial}
\def\jbar{{\bar\jmath}}
\def\K{K\"ahler }
\def\KR{K\"ahler-Ricci }
\def\be{\begin{equation}}
\def\ee{\end{equation}}
\def\ol{\overline}
\def\lf{\left}
\def\ri{\right}
\def\e{\epsilon}
\def\ijb{{i\jbar}}
\def\p{\partial}
\def\C{\Bbb C}
\def\p{\partial}
\def\p{\partial}
\def\C{\Bbb C}
\def\bee{\begin{equation*}}
\def\eee{\end{equation*}}
\def\ol{\overline}
\def\e{\epsilon}
\def\lf{\left}
\def\heat{\lf(\frac{\p}{\p t}-\Delta\ri)}
\def\ijb{i\bar{j}}
\def\ri{\right}
\def\p{\partial}
\def\jbar{{\bar\jmath}}
\def\K{K\"ahler }
\def\KR{K\"ahler-Ricci }
\def\be{\begin{equation}}
\def\ee{\end{equation}}
\def\ol{\overline}
\def\lf{\left}
\def\ri{\right}
\def\e{\epsilon}
\def\ijb{{i\jbar}}
\def\p{\partial}
\def\C{\Bbb C}
\def\p{\partial}
\def\p{\partial}
\def\C{\Bbb C}
\def\KE{K\"ahler-Einstein }
\def\ddb{\sqrt{-1}\partial\bar\partial}
\begin{document}

\title[]
{second Ricci flow on noncompact Hermitian manifolds}

 \author{Man-Chun Lee}
\address[Man-Chun Lee]{Department of Mathematics, Northwestern University, Evanston, IL 60208, USA}
\email{mclee@math.northwestern.edu}

\renewcommand{\subjclassname}{
  \textup{2010} Mathematics Subject Classification}
\subjclass[2010]{Primary 53C55; Secondary 53C44
}

\date{\today}

\begin{abstract}
In this work, we first establish short time existence and Shi's type estimate of second Ricci flow on complete noncompact Hermitian manifolds. As an application, we use the second Ricci flow to discuss the existence of \KE metric on complete noncompact Hermitian manifolds. 
\end{abstract}

\keywords{Hermitian manifold, holomorphic bisectional curvature, K\"ahler-Einstein metric}

\maketitle

\markboth{Man-Chun Lee}{second Ricci flow on noncompact Hermitian manifolds}
\section{introduction}
Let $(M,g,J)$ be a complete complex manifold and $g$ is a Hermitian metric. $g$ is Hermitian if $g$ is a Riemannian metric and also satisfies $g(X,Y)=g(JX,JY)$ for all $X,Y\in TM$. When $M$ is compact \K or complete noncompact \K with suitable curvature, it was shown that the Hamilton Ricci flow will preserve the \K condition \cite{Cao1985,Shi1997,HuangTam2018}. The \KR flow was then found to be very powerful in the study of  geometrical classification in \K geometry.  However, when $g$ is non-K\"ahler, generally the Ricci flow will no longer preserve the Hermitian condition. Inspired by this, one may ask if there is any alternative parabolic flow which also preserves the Hermitian structure. In \cite{Gill2011}, Gill introduced a Hermitian flow called the Chern-Ricci flow which aims to study the existence of Hermitian metric with flat Chern-Ricci curvature.

In this work, we are interested in special case of Hermitian flow on complete noncompact Hermitian manifolds which was first introduced by Streets and Tian \cite{StreetTian2011}: 
$$\frac{\partial}{\partial t}g =-S_g,\;\;g(0)=g_0$$
where $S_g$ is the second Ricci curvature with respect to the Chern connection of $g$. This flow was also appeared in \cite{LiuYang2012} which aim to study Hermitian Einstein metric on complex manifolds. In this article, we will call it the Hermitian Ricci flow. In fact, in \cite{StreetTian2011}, a more general Hermitian flow was introduced in which the direction of the deformation may involve torsion terms $Q(T)$. The Hermitian curvature flow was also found to be useful in the study of Hermitian geometry.
 In \cite{StreetTian2010}, they initiated a program of studying a particular choice of $Q$ in which the flow preserves the pluriclosed condition. More recently Ustinovskiy \cite{Yury2016} also showed that for a different choice of $Q$ the flow will preserve the nonnegativity of bisectional curvature which leads to an extensions of the classical Frankel conjecture to quasi-positive case. Motivated by the work in \cite{Liu2014,BohmWilking2007}, the author study the first Ricci curvature along the flows with $Q=0$ and use it to show that compact Hermitian manifolds with quasi-negative bisectional curvature have ample canonical line bundle. By the celebrated work of Yau and Aubin, \cite{Aubin1976,Yau1978}, it is equivalent to say that  there is a \KE metric with negative scalar curvature.

In this article, we wish to study the existence of \KE metric  on complete noncompact Hermitian manifolds. To extend the work in \cite{Lee2018}, we first develop some foundational results on the short-time existence of the Hermitian flow under some reasonable assumption. In particular, we have the following short-time existence result. 
\begin{thm}\label{MAIN-EX}
Suppose $(M,g_0)$ is a complete noncompact Hermitian manifold with
$$\sup_M |Rm|+|T|^2+|\nabla T|<+\infty,$$
then there is a short-time solution $g(t)$ on $M\times [0,\tau]$ to 
$$\partial_t g_{i\bar j}=-S_{i\bar j},\;\;g(0)=g_0.$$
Moreover, the solution $g(t)$ has bounded Chern curvature and torsion on $[0,\tau]$.
\end{thm}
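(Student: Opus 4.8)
The first point to observe is that, in contrast with the Riemannian Ricci flow, the equation $\partial_t g_{i\bar j}=-S_{i\bar j}$ is \emph{already} strictly parabolic in local holomorphic coordinates: since $S_{i\bar j}=-g^{k\bar l}\partial_k\partial_{\bar l}g_{i\bar j}+g^{k\bar l}g^{p\bar q}(\partial_k g_{i\bar q})(\partial_{\bar l}g_{p\bar j})$, the principal part of $-S_{i\bar j}$ is the elliptic operator $g^{k\bar l}\partial_k\partial_{\bar l}$ applied to $g_{i\bar j}$ (elliptic because $g>0$), so no DeTurck-type gauge fixing is needed; on a \emph{compact} manifold short-time existence, uniqueness and smoothness are then immediate from the standard theory of quasilinear parabolic systems (cf.\ \cite{StreetTian2011,Lee2018}), and the real task is the passage to the complete noncompact case under the hypothesis $K:=\sup_M\big(|Rm|+|T|^2+|\nabla T|\big)<\infty$. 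The plan is to run Shi's exhaustion scheme, adapted to the Chern connection: fix a smooth exhaustion $\Omega_1\Subset\Omega_2\Subset\cdots$ of $M$ by relatively compact domains with smooth boundary, $\bigcup_k\Omega_k=M$, and on each $\bar\Omega_k$ solve the Dirichlet problem
\[
\partial_t g^{(k)}=-S_{g^{(k)}},\qquad g^{(k)}(0)=g_0,\qquad g^{(k)}\big|_{\partial\Omega_k}=g_0\big|_{\partial\Omega_k}
\]
(if smoothness up to the parabolic corner $\partial\Omega_k\times\{0\}$ is desired, one uses time-dependent boundary data whose $t$-jet at $0$ obeys the compatibility conditions forced by the equation and the bounded geometry of $g_0$; this is harmless). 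Parabolicity gives a solution $g^{(k)}$ on $\Omega_k\times[0,T_k)$, smooth for $t>0$, and everything then reduces to estimates uniform in $k$.

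The first such estimate is a zeroth-order one: I would show there is $\tau=\tau(n,K)>0$, independent of $k$, with
\[
\tfrac12\,g_0\le g^{(k)}(t)\le 2\,g_0\qquad\text{on }\Omega_k\times[0,\min(\tau,T_k)),
\]
so that in particular $T_k>\tau$. The mechanism is a continuity/bootstrap argument: on an interval where a two-sided metric bound is known the flow is uniformly parabolic, and $\operatorname{tr}_{g_0}g^{(k)}$ — paired with a quantity controlling $g^{(k)}$ from below, e.g.\ $\operatorname{tr}_{g^{(k)}}g_0$, or $\log\det(g^{(k)}/g_0)$ together with the upper bound — satisfies a differential inequality $\partial_t u\le\Delta_{g^{(k)}}u+C(u+1)$ with $C=C(n,K)$ absorbing the curvature and torsion of $g_0$; the maximum principle on the compact $\bar\Omega_k$ applies because these quantities keep their initial values on $\partial\Omega_k$, and the bounds obtained improve the assumed two-sided bound, closing the loop.

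Once uniform parabolicity is secured on $[0,\tau]$, I would establish Shi-type interior estimates for the flow and apply them in localized form with cutoff functions supported in $g_0$-balls of radius $1$ (legitimate because $g_0$ has bounded geometry); this gives $|Rm_{g^{(k)}}|+|T_{g^{(k)}}|\le C(n,K,\tau)$ on $\Omega_k\times[0,\tau]$ and, for $t\in(0,\tau]$, $|\nabla^m Rm_{g^{(k)}}|+|\nabla^{m+1}T_{g^{(k)}}|\le C(m,n,K,\tau)\,t^{-(m+1)/2}$, with all constants independent of $k$ and of the ball's center. Standard interior parabolic regularity then bootstraps these into uniform bounds on all derivatives of $g^{(k)}$ over compact subsets of $M\times[0,\tau]$.

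Finally, an Arzel\`a--Ascoli/diagonal extraction produces a subsequence converging in $C^\infty_{loc}(M\times[0,\tau])$ to a limit $g(t)$ which solves $\partial_t g=-S_g$ with $g(0)=g_0$, is a genuine Hermitian metric on all of $M$ by virtue of $\tfrac12 g_0\le g(t)\le 2g_0$ (hence complete, $g_0$ being complete), and satisfies $\sup_{M\times[0,\tau]}\big(|Rm_g|+|T_g|\big)<\infty$ precisely because the bound for the $g^{(k)}$ did not depend on the center point. The hard part will be the a priori estimates just described: one must track carefully how the torsion of $g_0$ enters the evolution inequalities — the Chern and Levi-Civita Laplacians differ by first-order torsion terms, and $S_{i\bar j}$ is not the curvature that appears most naturally under the maximum principle — and the localized Shi-type estimates must be arranged so that the cutoff errors are controlled by the single constant $K$ rather than by $\Omega_k$. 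Parabolicity, by contrast, is automatic, which is exactly what makes the scheme go through.
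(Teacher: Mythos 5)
Your observation that the flow is already strictly parabolic (no DeTurck gauge needed) matches Lemma \ref{para-sys}, and your zeroth-order equivalence estimate on the Dirichlet pieces is plausible. The genuine gap is the central analytic step: the claim that ``Shi-type interior estimates'' with cutoffs give $|Rm(g^{(k)})|+|T_{g^{(k)}}|\le C(n,K,\tau)$ with $K=\sup_M\big(|Rm(g_0)|+|T_{g_0}|^2+|\nabla T_{g_0}|\big)$ only. The interior estimates actually available in this setting (Proposition \ref{first-esti}) are obtained by first bounding $\Psi=\Gamma_{g_0}-\Gamma_{g(t)}$ via Lemma \ref{firstorder-diff}, and the evolution of $|\Psi|^2$ contains the term $\nabla_p\tilde R_{i\bar qk}{}^r$, so the constants necessarily involve $\sup|\nabla_{g_0}Rm(g_0)|$ and $\sup|\nabla_{g_0}T_{g_0}|$ on the ball; under the hypotheses of Theorem \ref{MAIN-EX} no bound on $\nabla Rm(g_0)$ is assumed, so these constants are not controlled by your $K$. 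Nor can you localize the curvature evolution directly: by Lemma \ref{evo-cur-tor} the quantity $G=\sqrt{|Rm|^2+|\nabla T|^2}$ only satisfies $\left(\partial_t-\Delta\right)G\le C_nG^2+C_n$, and the quadratic term has the unfavorable sign, so a cutoff maximum principle yields no a priori bound; such a Riccati-type inequality is useful only as a doubling-time estimate once one already knows $\sup G<\infty$. Producing an interior curvature bound from a purely $C^0$ curvature bound on the initial data is exactly the hard point that Shi handled with integral estimates and that the paper explicitly bypasses.

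The paper's route avoids this by not using Dirichlet pieces at all: it conformally completes each sublevel set, $h_{\rho_i}=e^{2F}g_0$ on $U_{\rho_i}$ (Hochard/Lee--Tam), so that each piece is a \emph{complete} metric of bounded geometry of infinite order with $|Rm(h_{\rho_i})|+|T(h_{\rho_i})|^2\le 2K_0$; Theorem \ref{short-time-1} gives a complete solution on each piece, Proposition \ref{first-esti} gives only \emph{qualitative} (piece-dependent) boundedness of $|Rm|+|T|^2$, and the uniform bound $4K_0$ on the uniform time $c_nK_0^{-1}$ then follows from a \emph{global} maximum principle applied to $|T|^4+|Rm|^2$ on the complete piece (Theorem \ref{Improved-short-time}), with no derivative bounds on the initial curvature entering anywhere; the hypothesis on $|\nabla T|$ is used only to produce the exhaustion function in assumption {\bf (B)}. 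In your Dirichlet scheme this global argument is unavailable, since nothing controls the curvature of $g^{(k)}(t)$ on $\partial\Omega_k$ for $t>0$ (the boundary condition fixes the metric, not its derivatives), and so the uniform interior bound you assert is precisely the missing ingredient rather than a standard fact. A secondary imprecision: your claimed bound cannot hold on all of $\Omega_k\times[0,\tau]$, only at a definite $g_0$-distance from $\partial\Omega_k$; this is harmless for the limit, but it underlines that everything quantitative must be interior, which is why the unproved interior estimate is the whole problem.
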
 
\begin{rem}For general $Q$, the corresponding Hermitian curvature will also admit short-time solution under the assumption made above which is a non-compact version of Streets-Tian's work \cite{StreetTian2011}. In fact under boundedness of Chern curvature and Torsion, the boundedness on $\nabla T$ is equivalent to boundedness of Riemannian curvature $Rm^L$. When $Q=0$, this can be replaced by existence of good exhaustion function on $M$. We refer readers to Theorem \ref{Improved-short-time} for detailed statement.\end{rem}


Next, we wish to apply the flow to study existence of \KE metric on negatively curved Hermitian manifolds.
 In this work, we are interested in the case when a complete noncompact Hermitian manifold has quasi-negative curvature. Using the existence of the Hermitian Ricci flow together with Shi-type estimates, we  extend the result in \cite{Lee2018} to complete noncompact case.  Our main result is the following.
\begin{thm}\label{main-main}
Let $(M,g_0)$ be a complete noncompact Hermitian manifold with bounded Riemannian curvature, Chern curvature and torsion. Suppose $g_0$ has non-positive Chern bisectional curvature and quasi-negative first Ricci curvature, then $M$ supports a \K metric which maybe incomplete. Furthermore, if the first Ricci curvature is uniformly negative outside a compact set, then $M$ supports a complete \KE metric $g_{KE}=-Ric(g_{KE})$ with bounded curvature.
\end{thm}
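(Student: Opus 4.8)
The strategy is to run the Hermitian Ricci flow $\partial_t g_{i\bar j} = -S_{i\bar j}$ starting from $g_0$, show that the non-positivity of the Chern bisectional curvature is preserved along the flow, and extract a limiting \K metric from suitable rescalings. First, by Theorem \ref{MAIN-EX} the flow exists on some $M\times[0,\tau]$ with uniformly bounded Chern curvature and torsion, hence (by the remark, using the bounded Riemannian curvature hypothesis) with bounded $\nabla T$, so Shi-type estimates give bounds on all derivatives of curvature for positive times. The key tensor-maximum-principle step is to show that $g(t)$ remains Hermitian with non-positive Chern bisectional curvature and that the first Ricci curvature stays quasi-negative; the evolution equation for the Chern curvature along the $Q=0$ flow (as in \cite{StreetTian2011, Lee2018}) has a reaction term that is controlled in the presence of non-positive bisectional curvature, and one invokes the noncompact tensor maximum principle of Shi-type, which is legitimate here because all the relevant quantities are bounded. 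In particular, tracing, $-\Ric(g(t))$ stays a non-negative (quasi-positive) $(1,1)$-form, so $\omega(t)$ is monotone: $\partial_t\omega = -\rho_C + (\text{torsion terms})$, and the cohomological/real-analytic structure forces $\omega(t)$ to increase in the Ricci direction.

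The second ingredient is to pass to a limit. Following \cite{Lee2018} and the Chern–Ricci flow literature (Gill, Tosatti–Weinkove), one normalizes by considering $\tilde\omega(t) = e^{-t}\omega(t)$ or an analogous rescaling adapted to the sign $-\Ric$, derives uniform $C^0$ and then local higher-order estimates for the associated potential via a parabolic complex Monge–Ampère equation, and shows the flow converges (locally smoothly, possibly after subtracting a time-dependent normalization) to a limit metric $g_\infty$. Because the bisectional curvature is non-positive throughout, the limiting metric is \K: the torsion must decay, since the quasi-negativity of $\Ric$ together with the monotonicity of the volume form forces $\int_M |T|^2$-type quantities to be controlled and ultimately the Hermitian defect to vanish in the limit (this is the noncompact analogue of the rigidity argument in \cite{Lee2018}). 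This gives the first assertion: $M$ supports a \K metric, possibly incomplete, with non-positive bisectional and quasi-negative Ricci.

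For the final assertion, assume $\Ric(g_0) \le -c\,g_0$ outside a compact set $K$. The point is that uniform negativity of the Ricci curticity at infinity gives a definite lower bound on the volume growth of the evolving metric and prevents the metric from degenerating at spatial infinity: one can run the flow (or, equivalently, solve the elliptic \KE equation $\Ric(g_{KE}) = -g_{KE}$ by a continuity/exhaustion method on a sequence of large domains $\Omega_j \uparrow M$, with Dirichlet data built from $g_0$) and obtain uniform two-sided bounds $c_1 g_0 \le g_{KE} \le c_2 g_0$ together with bounded curvature from the local Shi estimates. Completeness of $g_{KE}$ then follows from the lower bound $g_{KE}\ge c_1 g_0$ and completeness of $g_0$. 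The main obstacle I expect is precisely this last step: controlling the flow (or the elliptic approximation) near spatial infinity so that the limit is a \emph{complete}, nondegenerate \K metric rather than an incomplete one — this is where the ``uniformly negative outside a compact set'' hypothesis must be used to produce uniform lower bounds on $g(t)$ via a barrier/maximum-principle argument, since without it the bisectional curvature condition alone only yields an incomplete metric (as reflected in the two-part statement of the theorem). The preservation of non-positive Chern bisectional curvature along a non-Kähler flow is the other delicate point, because torsion terms a priori spoil the standard Kähler computation and one must check that Ustinovskiy-type or Streets–Tian-type algebraic identities keep the reaction term in the right cone.
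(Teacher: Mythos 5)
There is a genuine gap, and it lies at the heart of both assertions. For the first part you propose to obtain the \K metric as a long-time limit of the Hermitian Ricci flow, invoking a parabolic Monge--Amp\`ere potential formulation, monotonicity of $\omega(t)$, and a claimed decay of torsion. None of this is available: the paper only establishes \emph{short-time} existence (Theorem \ref{MAIN-EX}/\ref{Improved-short-time}), the flow $\partial_t g=-S$ is not the Chern--Ricci flow and does not move within a $\ddb$-class, so there is no scalar potential equation to which Gill/Tosatti--Weinkove-type estimates apply, and no mechanism is given that forces $|T|\to 0$. The paper's actual argument is much shorter and avoids any limit of the flow: one first shows (Theorem \ref{preserve-Ric}, via Proposition \ref{al-pre-Ric} and a rescaling trick) that $\Ric(g(t))\le 0$ is preserved together with a pinching inequality \emph{of the Ricci curvature against the full curvature} (not preservation of non-positive bisectional curvature, which is not claimed), and then upgrades quasi-negativity to strict negativity $\Ric(g(t))<0$ for $t>0$ by a strong maximum principle with a barrier built from a Dirichlet heat equation on a compact domain (Theorem \ref{main-thm}). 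The key observation you are missing is that the first Chern--Ricci form is $d$-closed (locally $-\ddb\log\det g$), so once $\Ric(g(t_1))<0$ the tensor $h=-\Ric(g(t_1))$ is itself a positive closed $(1,1)$-form, i.e.\ the desired (possibly incomplete) \K metric. No convergence of the flow is needed.

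For the second assertion your exhaustion/Dirichlet scheme "built from $g_0$" does not address the real difficulty: $g_0$ is only Hermitian, and all standard methods for producing a complete \KE metric (Cheng--Yau continuity, Monge--Amp\`ere on exhausting domains, \KR flow) require a complete \K background, whose existence is precisely what is a priori unknown here. The paper resolves this by combining Proposition \ref{inf-beh} (uniform negativity of $\Ric$ at infinity persists for a short time) with the previous step, so that $-C\,g(\tau)\le \Ric(g(\tau))<-\sigma\,g(\tau)$; hence $h=-\Ric(g(\tau))$ is a \emph{complete} \K metric uniformly equivalent to $g(\tau)$, with all covariant derivatives controlled by the Shi-type estimates. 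One then writes $-\Ric(h)=h+\ddb F$ with $F$ bounded together with its derivatives and invokes the existence result of Lott--Zhang (and Shi's estimates for the curvature bounds) to produce $g_{KE}$. Your two-sided bound $c_1 g_0\le g_{KE}\le c_2 g_0$ is asserted rather than derived, and without first manufacturing the complete \K reference metric the barrier/maximum-principle argument you sketch has nothing to run on.
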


It is unclear to the author whether $M$ supports a \textit {complete} \K metric in the quasi-negative case. When $M$ is noncompact, the existence of complete \KE metric was studied by various authors, see for example \cite{ChengYau1980,Wu2008} based on assumptions on the Ricci curvature and \cite{WuYau2017,HuangLeeTamTong2018} based on the negativity of holomorphic sectional curvature. Theorem \ref{main-main} is different from the previous results since the K\"ahlerity is a priori unknown.

The paper is organized as follows: In section 2, we recall some preliminary definitions and formula about the Chern connection. In section 3, we will derive evolution equations for the Hermitian Ricci flow. In section 4, we will derive some a-priori estimates for the Hermitian Ricci flow. In section 5, 6, we will prove the general short-time existence to Hermitian manifolds with bounded Riemannian curvature, Chern curvature and torsion. In section 7, 8, we will give a proof of Theorem \ref{main-main}.

{\it Acknowledgement}: The author would also like to thank the referee for useful comments.

\section{Chern connection}\label{background}
In this section, we collect some useful formulas for the Chern connection. Those materials can be found in \cite{ShermanWeinkove2013}. Let $(M,g)$ be a Hermitian manifold. The {\it Chern connection} of $g$ is defined as follows: In local holomorphic coordinates $z^i$,
 for a vector field $X_i\p_i$, where $\p_i:=\frac{\p}{\p z^i}$, $\p_{\bar i}=\frac{\p}{\p \bar z^i}$,
 $$\nabla_iX^k=\p_{i}X^k+\Gamma_{ij}^kX^j;\ \nabla_{\bar i}X^k=\p_{\bar i}X^k.
 $$
 For a $(1,0)$ form $a=a_idz^i$,
 $$
 \nabla_ia_j=\p_i a_j-\Gamma_{ij}^ka_k; \ \nabla_{\bar i}a_j=\p_{\bar i}a_j.
 $$
 Here $\nabla_i:=\nabla_{\p_i}$, etc. $\Gamma$ are the coefficients of $\nabla$,
with
$$\Gamma_{ij}^k=g^{k\bar l}\partial_i g_{j\bar l}.$$
Noted that Chern connection is a connection such that $\nabla g=\nabla J=0$ and the torsion has no $(1,1)$ component.   The {\it torsion} of $g$ is defined to be
$$T_{ij}^k=\Gamma_{ij}^k-\Gamma_{ji}^k.$$
We remark that $g$ is \K if and only if $T=0$. Define the \textit{Chern curvature tensor} of $g$ to be
$$R_{i\bar jk}\,^l=-\partial_{\bar j}\Gamma_{ik}^l.$$
We raise and lower indices by using metric $g$. {Direct computations show:
$$
\ol{R_{i\bar jk\bar l}}=R_{j\bar il\bar k}.
$$
In this note, we will use $Rm$ to denote curvature tensor with respect to the Chern-connection while $Rm^L$ will denote the Riemannian curvature tensor.}

The Chern-Ricci curvature is defined by
$$R_{i\bar j}=g^{k\bar l}R_{i\bar j k\bar l}=-\partial_i \partial_{\bar j}\log \det g.$$
Note that if $g$ is not K\"ahler, then $R_\ijb$ may not equal to $g^{k\bar l}R_{ k\bar li\bar j}=S_{i\bar j}$. In some content, $Ric$ is  sometimes called first Ricci curvature while $S$ is called the second Ricci.
\begin{lma}
The commutation formulas for the Chern curvature are given by
\begin{align*}
[\nabla_i,\nabla_{\bar j}]X^l=R_{i\bar j k}\,^l X^k,\quad\quad [\nabla_i,\nabla_{\bar j}]a_k=-R_{i\bar j k}\,^l a_l;\\
[\nabla_i,\nabla_{\bar j}]X^{\bar l}=-R_{i\bar j}\,^{\bar l}\,_{\bar k} X^{\bar k},\quad\quad [\nabla_i,\nabla_{\bar j}]a_{\bar k}=R_{i\bar j}\,^{\bar l}\,_{\bar k} a_{\bar l}.
\end{align*}
\end{lma}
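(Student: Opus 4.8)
The plan is to prove the four identities by a direct computation from the local expressions for the Chern connection recorded in Section~\ref{background}. The structural input that makes everything routine is that the only non-vanishing Christoffel symbols are $\Gamma_{ij}^k$ and their conjugates $\ol{\Gamma_{ij}^k}$; consequently $\nabla_{\bar j}$ contributes no connection term on holomorphic (unbarred) indices and $\nabla_i$ contributes none on anti-holomorphic (barred) indices. Since $[\p_i,\p_{\bar j}]=0$ as operators on functions, when $[\nabla_i,\nabla_{\bar j}]$ is applied to a tensor all second-order derivative terms and all $\Gamma\cdot\p$ first-order terms will cancel, and what survives is an algebraic expression in the $\p_{\bar j}\Gamma$'s, which is by definition the Chern curvature $R_{i\bar jk}\,^l=-\p_{\bar j}\Gamma_{ik}^l$.

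I would carry out the vector case in detail first. For $X=X^l\p_l$ we have $\nabla_{\bar j}X^l=\p_{\bar j}X^l$, so
\[
\nabla_i\nabla_{\bar j}X^l=\p_i\p_{\bar j}X^l+\Gamma_{im}^l\,\p_{\bar j}X^m,
\]
whereas $\nabla_iX^l=\p_iX^l+\Gamma_{im}^lX^m$ gives
\[
\nabla_{\bar j}\nabla_iX^l=\p_{\bar j}\p_iX^l+(\p_{\bar j}\Gamma_{im}^l)X^m+\Gamma_{im}^l\,\p_{\bar j}X^m.
\]
Subtracting, the leading terms cancel and $[\nabla_i,\nabla_{\bar j}]X^l=-(\p_{\bar j}\Gamma_{im}^l)X^m=R_{i\bar jm}\,^lX^m$, which is the first identity.

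For the $(1,0)$-form identity I would run the same bookkeeping with $\nabla_ia_k=\p_ia_k-\Gamma_{ik}^ma_m$, or, more efficiently, apply $[\nabla_i,\nabla_{\bar j}]$ to the function $a_kX^k$: the Leibniz rule together with $[\nabla_i,\nabla_{\bar j}](a_kX^k)=0$ gives $([\nabla_i,\nabla_{\bar j}]a_k)X^k=-a_k\,R_{i\bar jm}\,^kX^m$ for all $X$, hence $[\nabla_i,\nabla_{\bar j}]a_k=-R_{i\bar jk}\,^la_l$. Finally, the two identities for barred indices follow from these by taking complex conjugates: conjugation interchanges barred and unbarred indices, replaces $\Gamma_{ij}^k$ by $\ol{\Gamma_{ij}^k}$, reorders the commutator (which accounts for the sign reversal relative to the unbarred cases), and, via $\ol{R_{i\bar jk\bar l}}=R_{j\bar il\bar k}$ together with a relabelling of dummy indices, converts $R_{i\bar jk}\,^l$ into $R_{i\bar j}\,^{\bar l}\,_{\bar k}$; this yields exactly the stated formulas for $X^{\bar l}$ and $a_{\bar k}$.

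There is no substantial obstacle here: the lemma is a routine consequence of the definitions, and everything reduces to the computation displayed above. The only place that demands care is the index and sign bookkeeping in the conjugation step — in particular keeping track of the position of the raised barred index in $R_{i\bar j}\,^{\bar l}\,_{\bar k}$ and applying the symmetry $\ol{R_{i\bar jk\bar l}}=R_{j\bar il\bar k}$ in the correct slot.
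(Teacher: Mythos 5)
Your computation is correct: the vanishing of the mixed Christoffel symbols makes the unbarred identities a one-line subtraction, the Leibniz/duality trick for $(1,0)$-forms is valid, and your conjugation step (using $\ol{R_{i\bar jk\bar l}}=R_{j\bar il\bar k}$ and the sign from reordering the commutator) lands exactly on the stated barred formulas. The paper itself offers no proof of this lemma — it is quoted from Sherman--Weinkove \cite{ShermanWeinkove2013} — and your argument is precisely the standard verification one would find there, so there is nothing to compare beyond noting that your write-up fills in the omitted routine computation.
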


When $g$ is not K\"ahler, the Bianchi identities maybe fail. The failure can be measured by the torsion tensor.

\begin{lma}\label{l-Chern-connection-1}
In a holomorphic local coordinates, let $T_{ij\bar k}=g_{p\bar k}T_{ij}^p$,  we have
\begin{align*}
R_{i\bar jk\bar l}-R_{k\bar ji\bar l}&=-\nabla_{\bar j}T_{ik\bar l},\\
R_{i\bar jk\bar l}-R_{i\bar lk\bar j}&=-\nabla_{i}T_{\bar j\bar lk},\\
R_{i\bar jk\bar l}-R_{k\bar li\bar j}&=-\nabla_{\bar j}T_{ik\bar l}-\nabla_kT_{\bar j\bar li}=-\nabla_iT_{\bar j\bar lk}-\nabla_{\bar l}T_{ik\bar j},\\
\nabla_pR_{i\bar jk\bar l}-\nabla_iR_{p\bar jk\bar l}&=-T_{pi}^rR_{r\bar jk\bar l},\\
\nabla_{\bar q}R_{i\bar jk\bar l}-\nabla_{\bar j}R_{i\bar qk\bar l}&=-T_{\bar q\bar j}^{\bar s}R_{i\bar sk\bar l}.
\end{align*}
\end{lma}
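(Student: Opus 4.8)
\noindent{\em Proof plan.} All five identities are local, so the plan is to work in a holomorphic chart and compute directly from the definitions $\Gamma^l_{ik}=g^{l\bar m}\p_ig_{k\bar m}$, $T^k_{ij}=\Gamma^k_{ij}-\Gamma^k_{ji}$ and $R_{i\bar jk}{}^l=-\p_{\bar j}\Gamma^l_{ik}$, using only that $\nabla g=0$ (so lowering indices commutes with $\nabla$) and the sign conventions above: under $\nabla_{\bar j}$ a lower holomorphic index carries no Christoffel term while a lower anti-holomorphic index $\bar l$ carries $-\bar\Gamma^{\bar s}_{\bar j\bar l}$, where $\bar\Gamma^{\bar s}_{\bar j\bar l}:=\overline{\Gamma^s_{jl}}=g^{m\bar s}\p_{\bar j}g_{m\bar l}$, and symmetrically for $\nabla_i$. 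First I would record two elementary formulas. Lowering the last index in $R_{i\bar jk}{}^l=-\p_{\bar j}\Gamma^l_{ik}$ and differentiating $g^{l\bar m}$ gives
\[
R_{i\bar jk\bar l}=-\p_i\p_{\bar j}g_{k\bar l}+g^{p\bar q}(\p_ig_{k\bar q})(\p_{\bar j}g_{p\bar l}),
\]
and lowering the index in $T$ gives $T_{ik\bar l}=\p_ig_{k\bar l}-\p_kg_{i\bar l}$, hence $g^{p\bar q}(\p_ig_{k\bar q}-\p_kg_{i\bar q})=T^p_{ik}$. A short manipulation of the inverse metric also yields $\bar\Gamma^{\bar s}_{\bar j\bar l}T_{ik\bar s}=T^m_{ik}\p_{\bar j}g_{m\bar l}$ and $\Gamma^m_{ik}T_{\bar j\bar lm}=(\p_ig_{k\bar s})T_{\bar j\bar l}{}^{\bar s}$, i.e.\ the precise Christoffel corrections occurring in $\nabla_{\bar j}T_{ik\bar l}$ and $\nabla_iT_{\bar j\bar lk}$.

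Next, for the first three identities: subtracting the displayed formula for $R_{k\bar ji\bar l}$ from that for $R_{i\bar jk\bar l}$, the second-derivative terms collapse to $-\p_{\bar j}T_{ik\bar l}$ and the quadratic terms to $T^p_{ik}\p_{\bar j}g_{p\bar l}$, and by the formulas of the previous paragraph this sum is exactly $-\nabla_{\bar j}T_{ik\bar l}$, which is the first identity. Comparing instead $R_{i\bar jk\bar l}$ with $R_{i\bar lk\bar j}$ and using $\p_{\bar j}g_{k\bar l}-\p_{\bar l}g_{k\bar j}=\overline{T_{jl\bar k}}=T_{\bar j\bar lk}$ gives $-\p_iT_{\bar j\bar lk}+(\p_ig_{k\bar s})T_{\bar j\bar l}{}^{\bar s}=-\nabla_iT_{\bar j\bar lk}$, the second identity (it is also just the complex conjugate of the first via $\overline{R_{i\bar jk\bar l}}=R_{j\bar il\bar k}$ and $\overline{T_{ik\bar l}}=T_{\bar i\bar kl}$). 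The third identity follows by telescoping through an intermediate curvature component: inserting $R_{k\bar ji\bar l}$ and applying identities one and two yields $-\nabla_{\bar j}T_{ik\bar l}-\nabla_kT_{\bar j\bar li}$, while inserting $R_{i\bar lk\bar j}$ yields the equivalent expression $-\nabla_iT_{\bar j\bar lk}-\nabla_{\bar l}T_{ik\bar j}$.

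For the two differentiated identities: since $R_{i\bar jk}{}^l=-\p_{\bar j}\Gamma^l_{ik}$ and mixed partials commute, $\p_{\bar q}R_{i\bar jk}{}^l-\p_{\bar j}R_{i\bar qk}{}^l=0$. Of the four indices of $R_{i\bar jk}{}^l$, only the lower anti-holomorphic slot $\bar j$ acquires a correction under $\nabla_{\bar q}$, namely $-\bar\Gamma^{\bar s}_{\bar q\bar j}R_{i\bar sk}{}^l$; antisymmetrizing in $\bar q$ and $\bar j$ and using $\bar\Gamma^{\bar s}_{\bar q\bar j}-\bar\Gamma^{\bar s}_{\bar j\bar q}=\overline{T^s_{qj}}=T^{\bar s}_{\bar q\bar j}$ produces
\[
\nabla_{\bar q}R_{i\bar jk}{}^l-\nabla_{\bar j}R_{i\bar qk}{}^l=-T^{\bar s}_{\bar q\bar j}R_{i\bar sk}{}^l,
\]
and lowering $l$ (permissible since $\nabla g=0$) is the fifth identity. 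The fourth is its complex conjugate, after relabeling and using $\overline{R_{i\bar jk\bar l}}=R_{j\bar il\bar k}$ and $\overline{T^s_{qj}}=T^{\bar s}_{\bar q\bar j}$; one can also derive the fourth directly by the same scheme, but that additionally requires the vanishing of the $(2,0)$-part of the Chern curvature (equivalently, $\p_p\Gamma^l_{ik}+\Gamma^l_{pm}\Gamma^m_{ik}$ symmetric in $p,i$), so conjugation is the cheaper route.

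The main obstacle is not conceptual but bookkeeping: in the first two paragraphs one must correctly pair each quadratic-in-$\p g$ term coming from differentiating $g^{-1}$ with the matching Christoffel correction, bearing in mind the asymmetry of the Chern connection (so $\nabla_{\bar j}$ is ``trivial'' on holomorphic lower indices and $\nabla_i$ on anti-holomorphic ones). The practical way to avoid sign errors is to establish the two auxiliary formulas for $\bar\Gamma T$ and $\Gamma T$ once at the start, and thereafter never expand $\nabla T$ by hand again.
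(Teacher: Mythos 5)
Your computation is correct: the coordinate formulas for $R_{i\bar jk\bar l}$ and $T_{ik\bar l}$, the matching of the quadratic $\p g$ terms with the Christoffel corrections in $\nabla_{\bar j}T_{ik\bar l}$ and $\nabla_iT_{\bar j\bar lk}$, the telescoping for the third identity, and the antisymmetrization plus conjugation (using $\ol{R_{i\bar jk\bar l}}=R_{j\bar il\bar k}$ and $\nabla g=0$) for the last two are all valid. The paper itself gives no proof of this lemma, simply citing Sherman--Weinkove, and your direct local-coordinate argument is essentially the standard proof found there.
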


It can be checked easily that for $X,Y\in T^{1,0}M$, $R(X,\bar X, Y,\bar Y)$ is real-valued. We consider the following curvature condition.
\begin{defn}
We say that $(M,g)$ has holomorphic bisectional curvature bounded above by a function $\kappa(x)$ if for any $x\in M$, $X,Y\in T^{1,0}_xM$,
$$R(X,\bar X,Y,\bar Y)\leq \kappa B(X,\bar X, Y,\bar Y) $$
where $B_{i\bar jk\bar l}=g_{i\bar j} g_{k\bar l}+g_{i\bar l}g_{k\bar j}$. 
\end{defn}

Here we should remark that our notation of bisectional curvature is slightly different from that in \cite{LeeWang2005}.

\begin{defn}
We say that $(M,g)$ has Chern-Ricci curvature bounded above by a function $\kappa(x)$ if for any $p\in M$, $X\in T_p^{1,0}M$, 
$$Ric(X,\bar X)\leq \kappa(p) g(X,\bar X).$$
If $\kappa$ is non-positive and negative at some point $z\in M$, then we say that $g$ has quasi-negative Chern-Ricci curvature.
\end{defn}
In this note, all the curvature tensor $Rm$ will be referring to the curvature tensor with respect to Chern connection.

\section{Evolution equations for the Hermitian Ricci flow}
In this section, we will discuss a special type of Hermitian Ricci flow introduced by \cite{StreetTian2011} with $Q\equiv 0$: 
\be\label{HRF}
\left\{
  \begin{array}{ll}
    \frac{\p}{\p t}g_\ijb=  &  -S_{i\bar j}; \\
    g(0)=  &g_0.
  \end{array}
\right.
\ee
Here $S_{i\bar j}=g^{k\bar l}R_{k\bar li\bar j}$ is the second Ricci curvature with respect to the Chern connection while the Chern-Ricci curvature (or first Ricci curvature) is defined by $R_{i\bar j}=g^{k\bar l}R_{i\bar jk\bar l}$. It coincides with the Chern-Ricci curvature if the metric is K\"ahler. However they are different  in general.

To begin with, we would like to point out that the Hermitian Ricci flow is indeed a parabolic system which is in a similar form as the Ricci DeTurck flow which was shown explicitly by Shi in \cite[Lemma 2.1]{Shi1989}.
\begin{lma}\label{para-sys}
In local coordinate, we have 
\begin{equation}
\begin{split}
\frac{\partial}{\partial t}g_{i\bar j}&=\frac{1}{2}g^{k\bar l}\left( \tilde\nabla_k \tilde\nabla_{\bar l}+ \tilde\nabla_{\bar l}\tilde\nabla_k\right)g_{i\bar j}-g^{k\bar l} g^{p\bar q}(\tilde\nabla_k g_{i\bar q})(\tilde\nabla_{\bar l}g_{p\bar j})\\
&\quad  -\frac{1}{2}\left(g^{k\bar l}g_{p\bar j} \tilde R_{k\bar li}\,^p+g^{k\bar l}g_{i\bar q}\tilde R_{k\bar l}\,^{\bar q}_{\bar j}\right).
\end{split}
\end{equation}
Here $\tilde \nabla$ and $\tilde R$ denotes the Chern connection and the Chern curvature of $g_0$ respectively.
\end{lma}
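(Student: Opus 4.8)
The plan is to rewrite the second Chern-Ricci curvature $S_{i\bar j}=g^{k\bar l}R_{k\bar l i\bar j}$ entirely in terms of the \emph{fixed} background Chern connection $\tilde\nabla$ and Chern curvature $\tilde R$ of $g_0$. Once this is done the right-hand side $-S_{i\bar j}$ visibly has leading part $\tfrac12 g^{k\bar l}(\tilde\nabla_k\tilde\nabla_{\bar l}+\tilde\nabla_{\bar l}\tilde\nabla_k)g_{i\bar j}$, which is elliptic, so that the system is strictly parabolic, in the same manner as the Ricci DeTurck flow. Everything below is local and computed in a holomorphic frame.

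First I would compare the two Chern connections. In holomorphic coordinates the anti-holomorphic slot of $g_{k\bar m}$ acquires no correction under $\tilde\nabla_i$, so $\partial_i g_{k\bar m}=\tilde\nabla_i g_{k\bar m}+\tilde\Gamma^p_{ik}g_{p\bar m}$; contracting with $g^{l\bar m}$ shows that the difference tensor $\Psi^l_{ik}:=\Gamma^l_{ik}-\tilde\Gamma^l_{ik}$ equals $g^{l\bar m}\tilde\nabla_i g_{k\bar m}$. Subtracting $\tilde R_{i\bar jk}{}^l=-\partial_{\bar j}\tilde\Gamma^l_{ik}$ from $R_{i\bar jk}{}^l=-\partial_{\bar j}\Gamma^l_{ik}$ then gives $R_{i\bar jk}{}^l=\tilde R_{i\bar jk}{}^l-\tilde\nabla_{\bar j}\Psi^l_{ik}$, where one uses that $\Psi$ is a tensor whose upper $(1,0)$ slot and lower $(1,0)$ slots receive no Christoffel correction under differentiation in an anti-holomorphic direction, so $\partial_{\bar j}\Psi^l_{ik}=\tilde\nabla_{\bar j}\Psi^l_{ik}$.

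The computational core is then to lower one index, contract, and expand. Writing $R_{k\bar l i\bar j}=g_{p\bar j}\tilde R_{k\bar l i}{}^p-g_{p\bar j}\tilde\nabla_{\bar l}\big(g^{p\bar m}\tilde\nabla_k g_{i\bar m}\big)$, contracting with $g^{k\bar l}$, using the product rule together with $\tilde\nabla_{\bar l}g^{p\bar m}=-g^{p\bar n}g^{r\bar m}\tilde\nabla_{\bar l}g_{r\bar n}$, and relabelling, one arrives at
$$S_{i\bar j}=g^{k\bar l}g_{p\bar j}\tilde R_{k\bar l i}{}^p-g^{k\bar l}\tilde\nabla_{\bar l}\tilde\nabla_k g_{i\bar j}+g^{k\bar l}g^{p\bar q}(\tilde\nabla_k g_{i\bar q})(\tilde\nabla_{\bar l}g_{p\bar j}).$$
To symmetrize the second-order term, I would apply the Chern commutation formulas of Section~\ref{background} to the $(1,1)$-tensor $g_{i\bar j}$, which gives $[\tilde\nabla_k,\tilde\nabla_{\bar l}]g_{i\bar j}=-\tilde R_{k\bar l i}{}^p g_{p\bar j}+\tilde R_{k\bar l}{}^{\bar q}{}_{\bar j}g_{i\bar q}$, and hence $g^{k\bar l}\tilde\nabla_{\bar l}\tilde\nabla_k g_{i\bar j}=\tfrac12 g^{k\bar l}(\tilde\nabla_{\bar l}\tilde\nabla_k+\tilde\nabla_k\tilde\nabla_{\bar l})g_{i\bar j}+\tfrac12 g^{k\bar l}\tilde R_{k\bar l i}{}^p g_{p\bar j}-\tfrac12 g^{k\bar l}\tilde R_{k\bar l}{}^{\bar q}{}_{\bar j}g_{i\bar q}$. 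Substituting this back, the $\tilde R_{k\bar l i}{}^p g_{p\bar j}$ contributions combine with coefficient $\tfrac12$ rather than $1$, a matching $\tilde R_{k\bar l}{}^{\bar q}{}_{\bar j}g_{i\bar q}$ term appears, and negating the whole expression reproduces exactly the asserted formula for $\partial_t g_{i\bar j}=-S_{i\bar j}$.

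The only step that requires genuine care---and the easiest place to slip---is the index bookkeeping: keeping track of precisely which tensor slots pick up Christoffel corrections under $\tilde\nabla$ in holomorphic versus anti-holomorphic directions (this is what makes $\partial_{\bar j}\Psi^l_{ik}=\tilde\nabla_{\bar j}\Psi^l_{ik}$ and the formula for $\tilde\nabla_{\bar l}g^{p\bar m}$ correct), and respecting the sign and ordering conventions of the commutation formulas when applied to $g_{i\bar j}$. Beyond this there is no analytic content; the statement is a purely algebraic identity.
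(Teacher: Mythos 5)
Your proposal is correct and follows essentially the same route as the paper's own proof: writing $\Gamma-\tilde\Gamma=g^{p\bar q}\tilde\nabla_k g_{i\bar q}$, expressing $R=\tilde R-\tilde\nabla_{\bar l}(\Gamma-\tilde\Gamma)$, expanding the product rule to produce the quadratic gradient term, and symmetrizing $g^{k\bar l}\tilde\nabla_{\bar l}\tilde\nabla_k g_{i\bar j}$ via the Chern commutation formula to generate the two curvature terms with coefficient $\tfrac12$. The signs and index placements in your intermediate identities agree with the paper's conventions, so no changes are needed.
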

\begin{proof}
\begin{equation}
\begin{split}
-S_{i\bar j}&=-g^{k\bar l} R_{k\bar li\bar j}\\
&=g^{k\bar l} g_{p\bar j}(\partial_{\bar l} \Gamma^p_{ki}-\partial_{\bar l} \tilde\Gamma^p_{ki})-g^{k\bar l}g_{p\bar j} \tilde R_{k\bar li}\,^p\\
&=g^{k\bar l} g_{p\bar j}\tilde\nabla_{\bar l}\left(g^{p\bar q}\tilde\nabla_k g_{i\bar q} \right)-g^{k\bar l}g_{p\bar j} \tilde R_{k\bar li}\,^p\\
&=g^{k\bar l}\tilde\nabla_{\bar l}\tilde\nabla_k g_{i\bar j}-g^{k\bar l} g^{p\bar q}(\tilde\nabla_k g_{i\bar q})(\tilde\nabla_{\bar l}g_{p\bar j})-g^{k\bar l}g_{p\bar j} \tilde R_{k\bar li}\,^p\\
&=\frac{1}{2}g^{k\bar l}\left( \tilde\nabla_k \tilde\nabla_{\bar l}+ \tilde\nabla_{\bar l}\tilde\nabla_k\right)g_{i\bar j}-g^{k\bar l} g^{p\bar q}(\tilde\nabla_k g_{i\bar q})(\tilde\nabla_{\bar l}g_{p\bar j})\\
&\quad -\frac{1}{2}\left(g^{k\bar l}g_{p\bar j} \tilde R_{k\bar li}\,^p+g^{k\bar l}g_{i\bar q}\tilde R_{k\bar l}\,^{\bar q}_{\bar j}\right).
\end{split}
\end{equation}
\end{proof}

\begin{lma}\label{evo-trace-1}
Suppose $g(t)$ is a solution to the Hermitian Ricci flow, then we have 
\begin{align}
\heat tr_g g_0 =-(g_0)_{p\bar q}g^{k\bar l} g^{i\bar j} \Psi^p_{ki}\Psi^{\bar q}_{\bar l \bar j}+g^{k\bar l} g^{i\bar j} \hat R_{k\bar l i\bar j}
\end{align}
where $\Psi=\Gamma_{g_0}-\Gamma_g$ denotes the difference between the Chern connection of $h$ and that of $g$ while $\hat R$ is the Chern curvature of $g_0$. 
\end{lma}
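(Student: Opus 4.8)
The plan is to compute the two terms in $\heat\,\operatorname{tr}_g g_0$ directly, writing $\operatorname{tr}_g g_0=g^{i\bar j}(g_0)_{i\bar j}$ and using the Chern connection $\nabla$ of the evolving metric $g(t)$ throughout. For the time derivative, differentiating $g^{i\bar q}g_{p\bar q}=\delta^i_p$ and substituting the flow equation gives $\p_t g^{i\bar j}=g^{i\bar q}g^{p\bar j}S_{p\bar q}$, so, since $g_0$ does not depend on $t$,
\[
\p_t\operatorname{tr}_g g_0=g^{i\bar q}g^{p\bar j}S_{p\bar q}(g_0)_{i\bar j}.
\]
For the spatial Laplacian, $\nabla g=\nabla g^{-1}=0$ lets us pull the metric factors out, so $\Delta\operatorname{tr}_g g_0=g^{k\bar l}g^{i\bar j}\nabla_k\nabla_{\bar l}(g_0)_{i\bar j}$; the ordering of $\nabla_k$ and $\nabla_{\bar l}$ is immaterial here, because the commutator $[\nabla_k,\nabla_{\bar l}]$ acting on $(g_0)_{i\bar j}$ produces only second-Ricci terms which, after tracing against $g^{i\bar j}$, cancel in conjugate pairs.

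The crucial point is that $g_0$ is parallel for its own Chern connection $\tilde\nabla$, so with $\Psi=\Gamma_{g_0}-\Gamma_g$ as in the statement the first covariant derivatives of $g_0$ with respect to $\nabla$ are purely algebraic:
\[
\nabla_k(g_0)_{i\bar j}=\Psi^p_{ki}(g_0)_{p\bar j},\qquad \nabla_{\bar l}(g_0)_{i\bar j}=\Psi^{\bar q}_{\bar l\bar j}(g_0)_{i\bar q}.
\]
Applying $\nabla_k$ to the second identity and using the Leibniz rule produces the quadratic term $\Psi^p_{ki}\Psi^{\bar q}_{\bar l\bar j}(g_0)_{p\bar q}$ — which, contracted with $g^{k\bar l}g^{i\bar j}$, is exactly the term $(g_0)_{p\bar q}g^{k\bar l}g^{i\bar j}\Psi^p_{ki}\Psi^{\bar q}_{\bar l\bar j}$ appearing in the conclusion — together with a first-derivative term $(\nabla_k\Psi^{\bar q}_{\bar l\bar j})(g_0)_{i\bar q}$. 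Since $\Psi$ is a tensor and the $(0,1)$ part of the Chern connection is just $\bar\p$, the covariant derivative $\nabla_k\Psi^{\bar q}_{\bar l\bar j}$ reduces to an ordinary derivative of Christoffel symbols, and the defining relation $R_{i\bar jk}{}^{l}=-\p_{\bar j}\Gamma^{l}_{ik}$, applied both to $g(t)$ and to $g_0$, converts it into the difference $\ol{R_{l\bar kj}{}^{q}}-\ol{\hat R_{l\bar kj}{}^{q}}$ of the two Chern curvatures.

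It then remains to collect everything in $\heat\operatorname{tr}_g g_0=\p_t\operatorname{tr}_g g_0-\Delta\operatorname{tr}_g g_0$. Using $g^{k\bar l}R_{k\bar l i\bar j}=S_{i\bar j}$ and the reality of the trace, the $g(t)$-curvature piece coming from $\nabla_k\Psi$, once contracted against $g^{k\bar l}g^{i\bar j}$ and $(g_0)_{i\bar q}$, equals precisely $\p_t\operatorname{tr}_g g_0$; the two cancel, and what survives is the $g_0$-curvature contribution $g^{k\bar l}g^{i\bar j}\hat R_{k\bar l i\bar j}$ together with the quadratic term carrying a minus sign — which is the asserted identity.

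I expect the last bookkeeping step to be the main obstacle: one must keep careful track of which index pair is raised or lowered with $g(t)$ and which with $g_0$ when rewriting $\ol{R_{l\bar kj}{}^{q}}$ and $\ol{\hat R_{l\bar kj}{}^{q}}$ in second-Ricci form, and one must check that the non-K\"ahler commutator terms $[\nabla_k,\nabla_{\bar l}](g_0)_{i\bar j}$ — which a priori could contribute extra torsion through the Bianchi-type identities of Lemma~\ref{l-Chern-connection-1} — genuinely drop out after the final contraction. Everything else is a routine Leibniz-rule computation, and one could organize it equally well starting from the parabolic form of the flow in Lemma~\ref{para-sys}.
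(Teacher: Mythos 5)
Your proposal is correct and follows essentially the same route as the paper: compute $\p_t \tr_g g_0$ and $\Delta \tr_g g_0$ separately, with the Laplacian of the trace producing the quadratic $\Psi*\Psi$ term, the $\hat R$-term, and an $S$-contraction that cancels against the time derivative. The paper simply states the Laplacian identity without the intermediate steps ($\nabla_k (g_0)_{i\bar j}=\Psi^p_{ki}(g_0)_{p\bar j}$, $\nabla_k\Psi^{\bar q}_{\bar l\bar j}$ becoming the difference of Chern curvatures), which your outline supplies correctly, including the observation that the $[\nabla_k,\nabla_{\bar l}]$ terms drop out after tracing.
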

\begin{proof}
Differentiate it with respect to $t$, we have 
\begin{equation}\label{trace-1}
\begin{split}
\frac{\partial }{\partial t} (g^{i\bar j}(g_0)_{i\bar j})
&=S^{i\bar j}(g_0)_{i\bar j}.
\end{split}
\end{equation}

On the other hand, 
\begin{equation}\label{trace-2}
\begin{split}
\Delta (g^{i\bar j}(g_0)_{i\bar j})
&=h_{p\bar q}g^{k\bar l} g^{i\bar j} \Psi^p_{ki}\Psi^{\bar q}_{\bar l \bar j}-g^{k\bar l} g^{i\bar j} \hat R_{k\bar l i\bar j}+S^{i\bar j}h_{i\bar j}.
\end{split}
\end{equation}
The conclusion follows immediately by adding \eqref{trace-1} and \eqref{trace-2} together.
\end{proof}

\begin{lma}\label{firstorder-diff}
Suppose $(M,g(t))$ is a soliution to \eqref{HRF}, then the tensor $\Psi_{ij}^k=\tilde\Gamma_{ij}^k-\Gamma_{ij}^k$ satisfies
\begin{align*}\heat |\Psi|^2&=-|\nabla \Psi|^2-|\bar\nabla \Psi|^2+2{\bf Re}\left[ g^{i\bar j} g^{k\bar l}g_{r\bar s}\Psi^{\bar s}_{\bar j\bar l}(g^{p\bar q}T^a_{pi}R_{a\bar qk}\,^r+g^{p\bar q}\nabla_p\tilde R_{i\bar qk}\,^r)\right].
\end{align*}
Here $\tilde \Gamma$and $\tilde R$ denotes the Chern connection and Chern curvature with respect to $g_0$ and the norm is calculated using the evolving metric $g(t)$.

\end{lma}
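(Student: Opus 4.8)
The plan is to compute the evolution of the tensor $\Psi$ itself and then feed it into a Bochner-type identity for $|\Psi|^2$. First I would record the variation of the Chern connection along the flow: differentiating $\Gamma^k_{ij}=g^{k\bar l}\p_i g_{j\bar l}$ in $t$ and using $\p_t g^{k\bar l}=g^{k\bar q}g^{p\bar l}S_{p\bar q}$ gives the standard identity $\p_t\Gamma^k_{ij}=g^{k\bar l}\nabla_i(\p_t g_{j\bar l})$, where $\nabla$ is the Chern connection of $g(t)$. Since $\tilde\Gamma$ does not depend on $t$, along \eqref{HRF} this yields
\[
\p_t\Psi^k_{ij}=-\,\p_t\Gamma^k_{ij}=g^{k\bar l}\nabla_i S_{j\bar l}.
\]

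Next I would convert the right-hand side into $\Delta\Psi$ plus lower order terms, where $\Delta=g^{p\bar q}\nabla_p\nabla_{\bar q}$ is the Chern Laplacian of $g(t)$. Writing $S_{j\bar l}=g^{p\bar q}R_{p\bar q j\bar l}$ and applying the torsion-corrected second Bianchi identity of Lemma \ref{l-Chern-connection-1}, $\nabla_i R_{p\bar q j\bar l}=\nabla_p R_{i\bar q j\bar l}+T^r_{pi}R_{r\bar q j\bar l}$, then raising one index, one gets $g^{k\bar l}\nabla_i S_{j\bar l}=g^{p\bar q}\nabla_p R_{i\bar q j}{}^k+g^{p\bar q}T^r_{pi}R_{r\bar q j}{}^k$. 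Since $\Psi$ carries only holomorphic indices, $\nabla_{\bar q}\Psi^k_{ij}=\p_{\bar q}\Psi^k_{ij}$, and the defining relation $R_{i\bar j k}{}^l=-\p_{\bar j}\Gamma^l_{ik}$, applied to $g(t)$ and to $g_0$, gives $R_{i\bar q j}{}^k=\nabla_{\bar q}\Psi^k_{ij}+\tilde R_{i\bar q j}{}^k$. Substituting and combining with the previous step,
\[
\heat\Psi^k_{ij}=g^{p\bar q}T^r_{pi}R_{r\bar q j}{}^k+g^{p\bar q}\nabla_p\tilde R_{i\bar q j}{}^k,
\]
which is, up to relabelling of indices, the inner tensor appearing in the statement.

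To finish, I would run the Bochner argument. Using $\nabla g=0$, expanding $\Delta|\Psi|^2$ produces $\langle\Delta\Psi,\Psi\rangle+\overline{\langle\Delta'\Psi,\Psi\rangle}+|\nabla\Psi|^2+|\bar\nabla\Psi|^2$, where $\Delta'=g^{p\bar q}\nabla_{\bar q}\nabla_p$; commuting $\nabla$ and $\bar\nabla$ with the Chern commutation formulas (the torsion of the Chern connection has no $(1,1)$ part, so only curvature appears) converts $\Delta'-\Delta$ into a contraction of $\Psi$ against $g^{p\bar q}R_{p\bar q i\bar j}=S_{i\bar j}$. Simultaneously, $\p_t|\Psi|^2=2{\bf Re}\,\langle\p_t\Psi,\Psi\rangle$ plus the terms obtained by differentiating the metric coefficients in $|\Psi|^2_{g(t)}$, and since $\p_t g_{i\bar j}=-S_{i\bar j}$ these are once more contractions of $\Psi$ and $\bar\Psi$ against $S$. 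The key point is that these two families of zeroth-order $S*\Psi*\bar\Psi$ terms coincide with opposite signs, so they cancel in $\heat|\Psi|^2=\p_t|\Psi|^2-\Delta|\Psi|^2$, leaving
\[
\heat|\Psi|^2=-|\nabla\Psi|^2-|\bar\nabla\Psi|^2+2{\bf Re}\,\langle\heat\Psi,\Psi\rangle;
\]
inserting the formula for $\heat\Psi$ from the previous step gives the assertion.

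The routine part is the index bookkeeping. The one substantive point, which I expect to be the main obstacle, is the exact cancellation in the last step between the terms coming from $\p_t$ hitting $g$ in $|\Psi|^2$ and the curvature terms coming from the commutator $[\nabla,\bar\nabla]$ in $\Delta|\Psi|^2$; this is precisely where the shape of the flow ($\p_t g_{i\bar j}=-S_{i\bar j}$, so that the relevant trace of the Chern curvature is exactly $S$) is used, and it must be verified with a fixed convention for raising and lowering indices.
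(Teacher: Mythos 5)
Your proposal is correct and follows essentially the same route as the paper: the variation formula $\p_t\Psi^k_{ij}=g^{k\bar l}\nabla_iS_{j\bar l}$, the relation $\nabla_{\bar q}\Psi^k_{ij}=R_{i\bar q j}{}^k-\tilde R_{i\bar q j}{}^k$, the torsion-corrected Bianchi identity of Lemma \ref{l-Chern-connection-1}, and the cancellation of the $S*\Psi*\bar\Psi$ terms coming from $\p_t$ of the metric factors against the commutator terms in $\Delta|\Psi|^2$ (which indeed holds, as you flag). The only difference is organizational — you apply the Bianchi identity to $\p_t\Psi$ to get a clean formula for $\heat\Psi$ before the Bochner step, whereas the paper performs the substitution inside the expansion of $\Delta|\Psi|^2$ — and this does not change the substance of the argument.
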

\begin{proof}
First noted that 
\begin{align*}
\partial_t \Psi_{ij}^k
&=g^{k\bar l} \nabla_i S_{j\bar l}.
\end{align*}
On the other hand,
\begin{align*}
\Delta |\Psi|^2&=g^{p\bar q} \nabla_p \nabla_{\bar q} (g^{i\bar j}g^{k\bar l}g_{r\bar s}\Psi_{ik}^r\Psi_{\bar j\bar l}^{\bar s})\\
&=g^{p\bar q} g^{i\bar j}g^{k\bar l}g_{r\bar s} \nabla_p\nabla_{\bar q} (\Psi^r_{ik}\Psi^{\bar s}_{\bar j\bar l})\\
&=g^{p\bar q} g^{i\bar j}g^{k\bar l}g_{r\bar s} \left(\nabla_p \Psi^r_{ik}\cdot \nabla_{\bar q}\Psi^{\bar s}_{\bar j\bar l}+\nabla_{\bar q} \Psi^r_{ik}\cdot \nabla_{ p}\Psi^{\bar s}_{\bar j\bar l} \right)\\
&\quad+g^{p\bar q} g^{i\bar j}g^{k\bar l}g_{r\bar s} \left(\nabla_p\nabla_{\bar q} \Psi^r_{ik}\cdot \Psi^{\bar s}_{\bar j\bar l}+\nabla_{ p}\nabla_{\bar q}\Psi^{\bar s}_{\bar j\bar l} \cdot  \Psi^r_{ik}\right)\\
&=|\nabla \Psi|^2+|\bar\nabla \Psi|^2+g^{p\bar q} g^{i\bar j}g^{k\bar l}g_{r\bar s} \left(\nabla_p\nabla_{\bar q} \Psi^r_{ik}\cdot \Psi^{\bar s}_{\bar j\bar l}+\nabla_{ p}\nabla_{\bar q}\Psi^{\bar s}_{\bar j\bar l} \cdot  \Psi^r_{ik}\right)\\
&=|\nabla \Psi|^2+|\bar\nabla \Psi|^2+g^{p\bar q}g^{i\bar j}g^{k\bar l}g_{r\bar s}\Psi^{\bar s}_{\bar j\bar l}\nabla_p(R_{i\bar qk}\,^r-\tilde R_{i\bar qk}\,^r)\\
&\quad +g^{p\bar q}g^{i\bar j}g^{k\bar l}g_{r\bar s}\Psi^r_{ik} \overline{\nabla_q (R_{ j\bar p l}\,^{ s}-\tilde R_{j\bar p l}\,^{s})}\\
&\quad +g^{i\bar j}g^{k\bar l}g_{r\bar s}\Psi^r_{ik} \left(S^{\bar s}\,_{\bar q} \Psi^{\bar q}_{\bar j\bar l}-S^{\bar q}\,_{\bar j}\Psi^{\bar s}_{\bar q\bar l}-S^{\bar q}\,_{\bar l}\Psi^{\bar s}_{\bar j\bar q}\right)\\
&=|\nabla \Psi|^2+|\bar\nabla \Psi|^2+2Re\left[g^{i\bar j}g^{k\bar l} g_{r\bar s} \Psi^{\bar s}_{\bar j\bar l}(\nabla_i S^r_k-g^{p\bar q}T^a_{pi}R_{a\bar qk}\,^r-g^{p\bar q}\nabla_p\tilde R_{i\bar qk}\,^r)\right]\\
&\quad +g^{i\bar j}g^{k\bar l}g_{r\bar s}\Psi^r_{ik} \left(S^{\bar s}\,_{\bar q} \Psi^{\bar q}_{\bar j\bar l}-S^{\bar q}\,_{\bar j}\Psi^{\bar s}_{\bar q\bar l}-S^{\bar q}\,_{\bar l}\Psi^{\bar s}_{\bar j\bar q}\right)
\end{align*}
where we have used the fact that
$$g^{p\bar q}\nabla_p R_{i\bar q k}\,^r=\nabla_i S_{k}^r-g^{p\bar q}T_{pi}^s R_{s\bar qk}\,^r.$$

Therefore, we can conclude that 
\begin{align*}
\heat |\Psi|^2&=-|\nabla \Psi|^2-|\bar\nabla \Psi|^2+2{\bf Re}\left[ g^{i\bar j} g^{k\bar l}g_{r\bar s}\Psi^{\bar s}_{\bar j\bar l}(g^{p\bar q}T^a_{pi}R_{a\bar qk}\,^r+g^{p\bar q}\nabla_p\tilde R_{i\bar qk}\,^r)\right]
\end{align*}
\end{proof}

Now we collect  the evolution equation for the Chern curvature tensor $R_{i\bar j k\bar l}$ which can be found in \cite[Section 6]{StreetTian2011}. 
\begin{lma}Suppose $g(t)$ is a solution to the Hermitian Ricci flow, we have
\begin{equation}\label{Rm-evo}
\begin{split}
\partial_t R_{i\bar j k\bar l}
&=\Delta R_{i\bar jk\bar l}+g^{r\bar s}\Big[T^p_{ri}\,\nabla_{\bar s} R_{p\bar jk\bar l}+T^{\bar q}_{\bar s \bar j}\,\nabla_r R_{i\bar qk\bar l}+T^p_{ri}T^{\bar q}_{\bar s\bar j}R_{p\bar q k\bar l}\\
&\quad  +R_{i\bar j r}\,^pR_{p\bar sk\bar l}+R_{r\bar j k}\,^p R_{i\bar s p\bar l} -R_{r\bar jp\bar l} R_{i\bar s k}\,^p\Big]\\
&\quad -\frac{1}{2}\left[S^p_i R_{p\bar jk\bar l}+S^p_k R_{i\bar jp\bar l}+S^{\bar q}_{\bar j}R_{i\bar qk\bar l}+S^{\bar q}_{\bar l}R_{i\bar j k\bar q} \right].
\end{split}
\end{equation}
\end{lma}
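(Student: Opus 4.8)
The plan is to compute $\partial_t R_{i\bar jk\bar l}$ directly by differentiating the defining formula $R_{i\bar jk\bar l} = -g_{p\bar l}\,\partial_{\bar j}\Gamma^p_{ik}$ in $t$, and then to reorganize the resulting first- and second-derivative terms into the Laplacian $\Delta R_{i\bar jk\bar l} = g^{r\bar s}\nabla_r\nabla_{\bar s}R_{i\bar jk\bar l}$ plus lower-order curvature/torsion corrections. First I would record the variation formulas under the flow: since $\partial_t g_{i\bar j} = -S_{i\bar j}$, we get $\partial_t g^{i\bar j} = S^{i\bar j}$ and $\partial_t \Gamma^p_{ik} = g^{p\bar q}\nabla_i(\partial_t g_{k\bar q}) = -g^{p\bar q}\nabla_i S_{k\bar q}$, so that
\begin{equation*}
\partial_t R_{i\bar jk\bar l} = \nabla_i\nabla_{\bar j}S_{k\bar l} + (\text{terms from }\partial_t g_{p\bar l}\text{ and the raised metric}).
\end{equation*}
The key algebraic input is to rewrite $\nabla_i\nabla_{\bar j}S_{k\bar l}$ using the definition $S_{k\bar l} = g^{r\bar s}R_{r\bar sk\bar l}$ and the contracted Bianchi-type identities from Lemma \ref{l-Chern-connection-1}: trading $g^{r\bar s}\nabla_iR_{r\bar sk\bar l}$ for $g^{r\bar s}\nabla_rR_{i\bar sk\bar l}$ (up to a torsion term $T^p_{ri}R_{p\bar sk\bar l}$) and similarly for the $\bar j$-derivative, so that after applying both identities $\nabla_i\nabla_{\bar j}S_{k\bar l}$ becomes $g^{r\bar s}\nabla_r\nabla_{\bar s}R_{i\bar jk\bar l} = \Delta R_{i\bar jk\bar l}$ plus commutator terms and torsion-derivative terms.

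The next step is bookkeeping: commuting $\nabla_r$ past $\nabla_{\bar s}$ (and past the earlier $\nabla_i$, $\nabla_{\bar j}$) via the commutation formulas in the first lemma of Section \ref{background} produces the quadratic curvature terms $R_{i\bar jr}{}^pR_{p\bar sk\bar l}$, $R_{r\bar jk}{}^pR_{i\bar sp\bar l}$, $-R_{r\bar jp\bar l}R_{i\bar sk}{}^p$ after contracting with $g^{r\bar s}$; commuting derivatives past the torsion in the Bianchi corrections yields the $T\cdot\nabla R$ and $T\cdot T\cdot R$ terms; and the variation of the two metric factors ($g^{r\bar s}$ and $g_{p\bar l}$, plus the index-raising bookkeeping) produces exactly the $-\tfrac12[S^p_iR_{p\bar jk\bar l} + S^p_kR_{i\bar jp\bar l} + S^{\bar q}_{\bar j}R_{i\bar qk\bar l} + S^{\bar q}_{\bar l}R_{i\bar jk\bar q}]$ contribution (the factor $\tfrac12$ reflecting the split into holomorphic and antiholomorphic halves, consistent with Lemma \ref{para-sys}). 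Assembling all pieces gives \eqref{Rm-evo}.

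The main obstacle is the careful handling of the non-Kähler correction terms: because the Bianchi identities fail, every interchange of a covariant derivative with an index of $R$ or with another derivative costs a torsion term, and one must track these through two rounds of the identities in Lemma \ref{l-Chern-connection-1} together with the commutation relations, making sure that all torsion contributions either cancel or combine into the stated $T^p_{ri}\nabla_{\bar s}R_{p\bar jk\bar l}$, $T^{\bar q}_{\bar s\bar j}\nabla_rR_{i\bar qk\bar l}$ and $T^p_{ri}T^{\bar q}_{\bar s\bar j}R_{p\bar qk\bar l}$ terms with the correct signs. Since the statement cites \cite[Section 6]{StreetTian2011}, I would either transcribe that computation or simply verify it in local normal coordinates at a point (where $\Gamma = 0$ but $\nabla\Gamma, \partial\bar\partial g \neq 0$), which reduces the algebra to tracking only second-order terms and torsion, and then invoke tensoriality.
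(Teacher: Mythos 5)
Your overall route is the right one, and it is in fact the only "proof" the paper has: the paper does not derive \eqref{Rm-evo} itself but quotes the $Q\equiv 0$ case of the computation in \cite[Section 6]{StreetTian2011}, which is exactly the covariant calculation you outline — vary the Chern connection ($\partial_t\Gamma^p_{ik}=-g^{p\bar q}\nabla_iS_{k\bar q}$), differentiate $R_{i\bar jk\bar l}=-g_{p\bar l}\partial_{\bar j}\Gamma^p_{ik}$, convert the resulting $\nabla\nabla S$ term into $\Delta R_{i\bar jk\bar l}$ using the torsion-corrected second Bianchi identities of Lemma \ref{l-Chern-connection-1}, and absorb the commutators. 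So you are reconstructing the paper's source rather than departing from it.

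Two specific claims in your sketch need repair, though. First, the accounting of the $-\frac{1}{2}\left[S^p_iR_{p\bar jk\bar l}+S^p_kR_{i\bar jp\bar l}+S^{\bar q}_{\bar j}R_{i\bar qk\bar l}+S^{\bar q}_{\bar l}R_{i\bar jk\bar q}\right]$ block: it is not produced by "the variation of the two metric factors." Differentiating $R_{i\bar jk\bar l}=-g_{p\bar l}\partial_{\bar j}\Gamma^p_{ik}$ in $t$ yields only the single full-strength term $-S_{p\bar l}R_{i\bar jk}{}^p$ together with $\nabla_{\bar j}\nabla_iS_{k\bar l}$ (note the order of derivatives — writing $\nabla_i\nabla_{\bar j}S_{k\bar l}$ already costs a commutator); the symmetric half-strength four-term block only emerges after the contracted commutator contributions $g^{r\bar s}[\nabla_r,\nabla_{\bar s}]R_{i\bar jk\bar l}$ are folded in, i.e.\ after rewriting with the symmetrized Laplacian as in Lemma \ref{para-sys}. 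If you treat the metric variation as already "exactly" accounting for that block, your bookkeeping will not close. Second, the proposed shortcut of verifying the identity "in local normal coordinates at a point where $\Gamma=0$" is unavailable here: for the Chern connection, $\Gamma^k_{ij}(p)=0$ in holomorphic coordinates forces $T^k_{ij}(p)=\Gamma^k_{ij}(p)-\Gamma^k_{ji}(p)=0$, and the torsion is a tensor, so such coordinates exist only where the metric is K\"ahler at the point — precisely the situation this paper is not in. The best you can do is normalize $g_{i\bar j}(p)=\delta_{ij}$ and kill the symmetric part of $\partial g$, keeping the torsion terms explicit. Neither issue invalidates your main plan (the fully covariant computation, or a transcription of Streets--Tian, goes through), but both would have to be corrected in an actual write-up.
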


By tracing $k$ and $l$, we arrive at the evolution equation of the Chern-Ricci curvature (or first Ricci curvature). For detailed computation, we refer to \cite{Lee2018}.
\begin{lma}\label{Ricci-evo}Suppose $g(t)$ is a solution to the Hermitian Ricci flow, we have the following evolution equation for the Chern-Ricci curvature.
\begin{equation}
\begin{split}\partial_t R_{i\bar j}=&\Delta R_{i\bar j}+g^{r\bar s} (T^p_{ri} \nabla_{\bar s} R_{p\bar j}+T^{\bar q}_{\bar s\bar j}\nabla_r R_{i\bar q}+T^p_{ri}T^{\bar q}_{\bar s\bar j}R_{p\bar q})\\
&+R_{i\bar jk}\,^p R_{p}\,^k-\frac{1}{2}\left[S^p_i R_{p\bar j} +S^{\bar q}_{\bar j}R_{i\bar q} \right].
\end{split}
\end{equation}
\end{lma}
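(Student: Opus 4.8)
The plan is to obtain the identity by simply tracing the already-established evolution equation \eqref{Rm-evo} for the full Chern curvature $R_{i\bar jk\bar l}$, using $R_{i\bar j}=g^{k\bar l}R_{i\bar jk\bar l}$. The one subtlety is that the metric evolves, so
$$\partial_t R_{i\bar j}=(\partial_t g^{k\bar l})R_{i\bar jk\bar l}+g^{k\bar l}\partial_t R_{i\bar jk\bar l},\qquad \partial_t g^{k\bar l}=g^{k\bar q}g^{p\bar l}S_{p\bar q},$$
the latter coming from $\partial_t g_{p\bar q}=-S_{p\bar q}$. First I would contract \eqref{Rm-evo} with $g^{k\bar l}$, pushing $g^{k\bar l}$ through every Chern-covariant derivative since $\nabla g=0$; this immediately yields $g^{k\bar l}\Delta R_{i\bar jk\bar l}=\Delta R_{i\bar j}$, the three torsion terms $g^{r\bar s}(T^p_{ri}\nabla_{\bar s}R_{p\bar j}+T^{\bar q}_{\bar s\bar j}\nabla_rR_{i\bar q}+T^p_{ri}T^{\bar q}_{\bar s\bar j}R_{p\bar q})$, the pair $-\frac12(S^p_iR_{p\bar j}+S^{\bar q}_{\bar j}R_{i\bar q})$ coming from two of the four $S$-terms in \eqref{Rm-evo}, and — using $g^{k\bar l}R_{p\bar sk\bar l}=R_{p\bar s}$ followed by $g^{r\bar s}R_{p\bar s}=R_p{}^r$ — the curvature term $R_{i\bar jk}{}^pR_p{}^k$ out of $g^{r\bar s}R_{i\bar jr}{}^pR_{p\bar sk\bar l}$.

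It then remains to check that all leftover contractions vanish, which rests on two cancellations. First, the two quartic curvature terms $g^{k\bar l}g^{r\bar s}R_{r\bar jk}{}^pR_{i\bar sp\bar l}$ and $-g^{k\bar l}g^{r\bar s}R_{r\bar jp\bar l}R_{i\bar sk}{}^p$ appearing in \eqref{Rm-evo} are exact negatives of one another: writing $R_{\cdots}{}^p=g^{p\bar m}R_{\cdots\bar m}$ and relabelling the dummy pair $(k,\bar l)\leftrightarrow(p,\bar m)$ turns one into minus the other. Second, the two remaining $S$-terms $-\frac12 g^{k\bar l}S^p_kR_{i\bar jp\bar l}$ and $-\frac12 g^{k\bar l}S^{\bar q}_{\bar l}R_{i\bar jk\bar q}$ are equal to each other after a dummy relabelling, and their sum equals $-(\partial_t g^{k\bar l})R_{i\bar jk\bar l}$, so together with the $(\partial_t g^{k\bar l})R_{i\bar jk\bar l}$ contribution from the time-derivative of the inverse metric they cancel completely. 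Collecting the surviving terms then gives precisely the asserted identity.

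As a consistency check one may also argue directly from $R_{i\bar j}=-\partial_i\partial_{\bar j}\log\det g$: since $\partial_t\log\det g=g^{k\bar l}\partial_t g_{k\bar l}=-g^{k\bar l}S_{k\bar l}=-R$ (the scalar curvature), one gets $\partial_t R_{i\bar j}=\partial_i\partial_{\bar j}R=g^{k\bar l}\nabla_i\nabla_{\bar j}R_{k\bar l}$, and rewriting this as $\Delta R_{i\bar j}$ plus lower order requires commuting the covariant derivatives via the torsion-twisted second Bianchi identities of Lemma \ref{l-Chern-connection-1}; this reproduces the same terms but with more delicate bookkeeping, so I would present the trace of \eqref{Rm-evo} as the main argument. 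The only real difficulty in either route is the index bookkeeping that makes the two cancellations above visible — nothing beyond \eqref{Rm-evo}, $\nabla g=0$, and the formula for $\partial_t g^{k\bar l}$ is needed; a detailed version of this computation also appears in \cite{Lee2018}.
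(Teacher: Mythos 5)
Your proposal is correct and follows essentially the same route as the paper: the paper obtains the lemma by tracing the curvature evolution \eqref{Rm-evo} over $k,\bar l$ (deferring the bookkeeping to \cite{Lee2018}), which is exactly what you do, and your two cancellations — the quartic pair under the dummy relabelling $(k,\bar l)\leftrightarrow(p,\bar m)$ and the two $S$-terms against $(\partial_t g^{k\bar l})R_{i\bar jk\bar l}$ — check out.
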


We also have the following evolution equations for higher order derivative which is a sight modification of \cite[Lemma 7.1-7.2]{StreetTian2011}. 
\begin{lma}\label{evo-cur-tor}
Suppose $g(t)$ is a solution to the Hermitian Ricci flow, then the Chern-curvature and the torsion of $g(t)$ satisfy the following equations.
\begin{equation}
\begin{split}
\frac{\partial}{\partial t} \nabla^k Rm
&=\Delta \nabla^k Rm+\sum_{j=0}^k \nabla^j T * \nabla^{k+1-j} Rm+\sum_{j=0}^k \nabla^j Rm*\nabla^{k-j}Rm\\
&\quad +\sum_{j=0}^k\sum_{l=0}^j \nabla^l T *\nabla^{j-l}T*\nabla^{k-j} Rm,\\
\frac{\partial}{\partial t}\nabla^k T&=\Delta \nabla^k T+\sum_{j=0}^{k+1}\nabla^jT*\nabla^{k+1-j}T +\sum_{j=0}^k \nabla^j T*\nabla^{k-j}Rm.
\end{split}
\end{equation}
\end{lma}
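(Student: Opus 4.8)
The plan is to derive the two evolution equations by commuting $\partial_t$ past the Chern covariant derivative $\nabla$ repeatedly and then organizing the resulting terms by the schematic $*$-notation, which only records tensorial contractions and the total number of derivatives falling on $Rm$ and $T$. First I would record the base cases $k=0$: the equation for $\partial_t Rm$ is exactly \eqref{Rm-evo} from the preceding lemma, and rewriting its right-hand side in $*$-notation gives $\partial_t Rm = \Delta Rm + T*\nabla Rm + Rm*Rm + T*T*Rm$ (the first-order terms $T*\nabla Rm$ absorb the two transport-type terms $T^p_{ri}\nabla_{\bar s}R_{p\bar j k\bar l}$ and $T^{\bar q}_{\bar s\bar j}\nabla_r R_{i\bar q k\bar l}$, the $Rm*Rm$ collects all five quadratic curvature terms and the two $S*R$ terms since $S$ is a trace of $Rm$, and $T*T*Rm$ is the term $T^p_{ri}T^{\bar q}_{\bar s\bar j}R_{p\bar q k\bar l}$). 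For $T$ at $k=0$ one needs the evolution of the torsion under \eqref{HRF}; since $T^k_{ij}=\Gamma^k_{ij}-\Gamma^k_{ji}$ and $\partial_t\Gamma^k_{ij}=-g^{k\bar l}\nabla_i S_{j\bar l}$ (this is the computation behind Lemma \ref{firstorder-diff}), one gets $\partial_t T = \nabla S + (\text{lower order})$, and using the Bianchi-type identities of Lemma \ref{l-Chern-connection-1} to convert $\nabla S$ into $\nabla Rm$ plus torsion corrections, together with the second Bianchi identity, yields $\partial_t T = \Delta T + T*\nabla T + T*T*T + T*Rm$, which is the $k=0$ case of the second displayed equation (the sum $\sum_{j=0}^{1}\nabla^j T*\nabla^{1-j}T$ is $T*\nabla T$, the term $\nabla^0 T*\nabla^0 Rm = T*Rm$ appears, and one should check that the schematic form indeed allows a $T*T*T$ term — in fact it is subsumed once one writes things carefully, or it can be listed explicitly).

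The inductive step is the heart of the argument. Assuming the formula holds for $\nabla^{k-1}Rm$ and $\nabla^{k-1}T$, I would apply $\nabla$ to both sides. On the left, $\nabla \partial_t (\nabla^{k-1}Rm) = \partial_t(\nabla^k Rm) - (\partial_t\Gamma)*\nabla^{k-1}Rm$, and since $\partial_t\Gamma = \nabla S + \text{l.o.t.} = \nabla Rm + T*Rm + \dots$ schematically, the correction term $(\partial_t\Gamma)*\nabla^{k-1}Rm$ already has the allowed form $\nabla Rm * \nabla^{k-1}Rm + T*Rm*\nabla^{k-1}Rm$, both of which fit into $\sum_{j}\nabla^j Rm*\nabla^{k-j}Rm$ and $\sum_j\sum_l \nabla^l T*\nabla^{j-l}T*\nabla^{k-j}Rm$ respectively. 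On the right, $\nabla\Delta(\nabla^{k-1}Rm) = \Delta(\nabla^k Rm) + (\text{commutator terms})$, where $[\nabla,\Delta]$ acting on a tensor produces $Rm*\nabla^{k-1}Rm + \nabla Rm * \nabla^{k-2}Rm + \dots$ plus, because the Chern connection has torsion, extra terms of the form $T*\nabla^k Rm$ and $\nabla T*\nabla^{k-1}Rm$ (this is where Lemma \ref{l-Chern-connection-1} and the commutation formulas are used). Then $\nabla$ applied to each schematic term in the inductive hypothesis distributes by Leibniz and raises exactly one index-count somewhere, keeping every term within the stated sums. The torsion equation is handled identically, using $\nabla\Delta = \Delta\nabla + [\nabla,\Delta]$ once more and the $k=0$ torsion equation as the seed.

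The main obstacle — or at least the only place requiring genuine care rather than bookkeeping — is controlling the commutator $[\nabla,\Delta]$ on a $(k{+}\cdot)$-tensor in the presence of torsion: one must verify that the torsion-generated terms (schematically $T*\nabla\,(\cdot)$ arising because $\nabla_i$ and $\nabla_{\bar j}$ do not commute even with the metric covariantly constant, and because $g^{p\bar q}\nabla_p\nabla_{\bar q}$ vs $g^{p\bar q}\nabla_{\bar q}\nabla_p$ differ by curvature) land in $\sum_{j=0}^k \nabla^j T*\nabla^{k+1-j}Rm$ and not in a term with too many total derivatives. Concretely, the worst term is $T*\nabla^{k+1}Rm$ — but this is exactly the $j=0$ summand of $\sum_{j=0}^k\nabla^j T*\nabla^{k+1-j}Rm$, so it is allowed. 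One also checks the derivative count is consistent: every term on the right of the $Rm$-equation has total derivative order $k+1$ on $\{Rm,T\}$ combined (treating $Rm$ as order $2$ and $T$ as order $1$ in the natural parabolic scaling, so $\nabla^j Rm$ is order $j+2$, $\nabla^{k+1-j}Rm$ is order $k+3-j$, sum $k+5$... rather, one checks homogeneity under the standard Shi-type scaling), matching $\partial_t\nabla^k Rm \sim \Delta\nabla^k Rm$. Since the statement is only schematic, no precise constants or index placements need to be tracked; the proof is a finite induction invoking Lemma \ref{l-Chern-connection-1}, the commutation formulas, and the base cases \eqref{Rm-evo} and the torsion computation, and mirrors \cite[Lemma 7.1--7.2]{StreetTian2011} with the flow's lower-order terms carried along.
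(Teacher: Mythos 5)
Your overall strategy is sound and is, in substance, the same computation the paper invokes: the paper's proof is a one-line citation to \cite{StreetTian2011}, Section 7, with the remark that $Q\equiv 0$ kills the extra term, and your induction (base cases from \eqref{Rm-evo} and the torsion evolution, then commuting $\partial_t$ with $\nabla$ and $\nabla$ with $\Delta$, with the torsion-generated worst term $T*\nabla^{k+1}Rm$ landing in the $j=0$ summand) is exactly the Shi-type argument behind those cited lemmas. However, two of your bookkeeping claims are wrong as written, and since they are the only places where the schematic form could fail, they need to be fixed rather than waved away. First, the base case for $T$: a genuine $T*T*T$ term could \emph{not} be ``subsumed'' into $\sum_{j}\nabla^jT*\nabla^{1-j}T+T*Rm$ (it has three torsion factors, no derivative, no curvature), so your hedge does not close the case; the point is that no such term occurs when $Q\equiv 0$. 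Computing from $T_{ij\bar l}=\partial_i g_{j\bar l}-\partial_j g_{i\bar l}$ gives $\partial_t T_{ij\bar l}=-\nabla_iS_{j\bar l}+\nabla_jS_{i\bar l}-T^p_{ij}S_{p\bar l}$, and the identities of Lemma \ref{l-Chern-connection-1} convert $-\nabla_iS_{j\bar l}+\nabla_jS_{i\bar l}$ into $g^{p\bar q}\nabla_p\nabla_{\bar q}T_{ij\bar l}+T*Rm$, so $\partial_t T=\Delta T+T*Rm$ schematically; the cubic torsion term is precisely part of the ``last term'' present only for general $Q$, which the paper discards.

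Second, in the induction step for $Rm$ you allow $\partial_t\Gamma=\nabla Rm+T*Rm+\dots$ and claim the resulting correction $T*Rm*\nabla^{k-1}Rm$ fits into $\sum_{j}\sum_{l}\nabla^lT*\nabla^{j-l}T*\nabla^{k-j}Rm$. It does not: that sum has two torsion factors and one curvature factor, whereas your term has one torsion and two curvature factors, and it fits none of the other sums either; if such a term were really produced, the lemma as stated would need an extra summand. It is not produced: by the computation behind Lemma \ref{firstorder-diff}, $\partial_t\Gamma^k_{ij}=-g^{k\bar l}\nabla_iS_{j\bar l}$ exactly, and since $\nabla g=0$ this is schematically just $\nabla Rm$, so the only correction is $\nabla Rm*\nabla^{k-1}Rm$, which sits in $\sum_j\nabla^jRm*\nabla^{k-j}Rm$. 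With these two points done precisely (and the analogous exact form of the $[\nabla,\Delta]$ commutator, which you describe correctly), your induction closes and reproduces the cited Streets--Tian computation with $Q\equiv 0$.
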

\begin{proof}
The proof is identical to that in \cite[Section 7]{StreetTian2011} except now $Q\equiv 0$ and hence the last term in their formula vanishes.
\end{proof}

\section{a priori estimates}\label{Shi-type}
In this section, we will establish some local estimates for the Hermitian Ricci flow $g(t)$ on compact subset. We first need some estimates on distance function.

\begin{lma}\label{dist-esti}
Suppose $(M^n,g)$ be a complete noncompact Hermitian manifold with complex dimension $n$ and $\mathrm{BK}_{g_0}\geq -K$ on $B_{g_0}(p,2r)$. Let $p\in M$ and   $d_{g}(x,p)$ be the distance from $p$ with respect to $g$, then whenever $d_{g}(x,p)\in [\frac{1}{\sqrt{K}},r]$,
$$\partial_i \partial_{\bar j} d_{g}(x,p) \leq C_n\sqrt{K}g_{i\bar j}$$
within the cut-locus of $p$.
\end{lma}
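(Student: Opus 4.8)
The statement is a Hermitian analogue of the classical Laplacian comparison (Hessian) estimate for the distance function, so the plan is to follow the standard Riemannian argument of comparing second variation along minimizing geodesics, while tracking the torsion contributions. First I would fix $x$ in the annular region $d_g(x,p)\in[1/\sqrt K, r]$ and away from the cut locus, and let $\gamma:[0,\ell]\to M$ be the unit-speed minimizing geodesic from $p$ to $x$, with $\ell=d_g(x,p)$. For a unit $(1,0)$ vector $X$ at $x$, write $\partial_i\partial_{\bar j}d_g(x,p)X^i\bar X^j$ in terms of the real Hessian $\operatorname{Hess} d_g$ evaluated on the real and imaginary parts of $X$; since $\operatorname{Hess}d_g(\nabla d_g,\cdot)=0$, it suffices to bound $\operatorname{Hess}d_g$ on the orthogonal complement of $\gamma'$. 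Here one must be slightly careful: the Chern connection is not the Levi-Civita connection, so "geodesic" should mean Riemannian geodesic and $\partial_i\partial_{\bar j}$ should be re-expressed via $\nabla^L$ plus lower-order terms, the discrepancy being controlled by $T$ and hence by $K$ (using $\sup|T|^2\lesssim K$ on the relevant ball, or absorbing it into the constant $C_n$).

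Second, I would run the usual index-form / second-variation comparison: for a parallel orthonormal frame $\{e_a\}$ along $\gamma$ perpendicular to $\gamma'$, take test Jacobi-like fields $V_a(s)=f(s)e_a(s)$ with $f(0)=0$, $f(\ell)=1$, and use $\operatorname{Hess}d_g(e_a,e_a)\le I_\ell(V_a,V_a)=\int_0^\ell\big(|f'|^2-R^L(\gamma',e_a,\gamma',e_a)f^2\big)\,ds$. With the lower curvature bound $\mathrm{BK}_{g_0}\ge -K$ on $B_{g_0}(p,2r)$ translated into a lower bound on the relevant Riemannian sectional/holomorphic curvatures (again up to torsion terms absorbed into $C_n$), choosing $f(s)=\sinh(\sqrt K s)/\sinh(\sqrt K\ell)$ gives the comparison value $\sqrt K\coth(\sqrt K\ell)$ for each of the $2n-1$ perpendicular directions. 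Summing and using $\coth(t)\le 2$ for $t\ge 1$ — which is exactly where the hypothesis $d_g(x,p)\ge 1/\sqrt K$ enters — yields $\operatorname{Hess}d_g\le C_n\sqrt K\, g$ on the perpendicular complement, and therefore $\partial_i\partial_{\bar j}d_g\le C_n\sqrt K\,g_{i\bar j}$ after converting back to the complex Hessian and enlarging $C_n$.

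\textbf{Main obstacle.} The genuinely non-routine point is the passage between the Chern geometry (in which the conclusion is phrased, via $\partial_i\partial_{\bar j}$ and $\mathrm{BK}$) and the Riemannian geometry (in which second-variation comparison naturally lives): one needs that $\partial_i\partial_{\bar j}d_g$ differs from $(\nabla^L)^2 d_g$ only by terms linear in the torsion and first derivatives of $d_g$, and that the Chern bisectional curvature lower bound controls the Riemannian sectional curvatures appearing in the index form, again modulo torsion. Both discrepancies are $O(|T|+|T|^2)$ and hence $O(\sqrt K)$ on $B_{g_0}(p,2r)$, so they can be swallowed into the dimensional constant, but writing this cleanly (and making sure the torsion bound is available on the right ball) is the only real work; everything else is the textbook Hessian comparison theorem.
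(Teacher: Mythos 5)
Your overall plan (second variation along a minimizing geodesic, then converting between the Chern and Levi--Civita pictures) is the right family of ideas, but the two places where you propose to ``absorb the discrepancy into $C_n$'' are exactly where the argument does not close as written. First, a lower bound on the \emph{Chern bisectional} curvature does not give a lower bound on the individual Riemannian sectional curvatures $R^L(\gamma',e_a,\gamma',e_a)$ that enter your index form $I_\ell(V_a,V_a)$: even in the K\"ahler case, bisectional curvature only controls the paired combination $R(\gamma',Y,\gamma',Y)+R(\gamma',JY,\gamma',JY)$, so one must run the second variation simultaneously on the fields $fY$ and $fJY$ (equivalently, compute the complex Hessian $\partial_i\partial_{\bar j}d(X,\bar X)$ directly), not bound $\operatorname{Hess}d$ on every perpendicular direction separately. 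Second, the discrepancy between $R^L$ and the Chern curvature is \emph{not} $O(|T|+|T|^2)$: it contains first derivatives $\nabla T$ of the torsion (this is the point of the Yang--Zheng formulas, and it is why the paper elsewhere remarks that bounding $Rm^L$ is equivalent to bounding $\nabla T$ given bounds on $Rm$ and $T$). Under the hypotheses you allow yourself ($\mathrm{BK}\geq-K$ and $|T|^2\lesssim K$), the $\nabla T$ terms cannot be absorbed pointwise into $C_n\sqrt K$; the known way around this is that in the index form these terms appear contracted with $\gamma'$ as total derivatives along the geodesic, so they can be integrated by parts and leave only $T\cdot ff'$ and boundary contributions. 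Writing out that cancellation is the real content of the comparison theorem, and your proposal skips it.

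For contrast, the paper does not reprove any of this: it rescales $\tilde g=Kg$ so that the (Chern) curvature and torsion bounds become of size one, invokes Yu's Hessian comparison theorem for almost Hermitian manifolds \cite[Theorem 1.1]{Yu2018} (checking only that Yu's proof needs the curvature hypothesis just on the ball in question), and rescales back. So either cite that result as the paper does, or, if you want a self-contained proof, you must (i) set up the second variation for the complex Hessian with the paired fields $fY,\,fJY$, and (ii) handle the $\nabla T$ terms by the integration-by-parts mechanism above rather than by a pointwise absorption.
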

\begin{proof}
Consider $\tilde g=Kg$, then $|Rm(\tilde g)|+|\tilde T|^2 \leq 1$ on $B_{\tilde g}(p,2\sqrt{K} r)$. We may apply \cite[Theorem 1.1]{Yu2018}, although it is stated globally, one can check easily that the proof only requires bisectional curvature lower bound locally. Therefore, we have on $B_{\tilde g}(p,\sqrt{K}r) \setminus B_{\tilde g}(p,1)$.
$$\partial_i \partial_{\bar j} d_{\tilde g}(x,p) \leq C_n\sqrt{K}\tilde g_{i\bar j}.$$
The result follows after we rescale it back to $g$.
\end{proof}

\begin{prop}\label{first-esti}
There is $\e_n>0$ such that the following holds. Suppose $g(t)$ is a solution to \eqref{HRF} on $B_{g_0}(p,r+\delta)\times [0,\tau]$ for some $p\in M$, $r,\delta>0$. If the Hermitian Ricci flow solution $g(t)$ satisfies 
\begin{align}\label{zero-cond}
(1+\e_n)^{-1} g_0\leq g(t)\leq (1+\e_n) g_0
\end{align}
on $B_{g_0}(p,\delta+r)\times [0,\tau]$. Let $K=\sup_{B_{g_0}(p,2r)}\sum_{i=0}^1|\nabla^i_{g_0} Rm(g_0)|+|\nabla^i_{g_0} T_{g_0}|$, then there is $C(n,\delta,K)>0$ such that on $B_{g_0}(p,r)\times [0,\tau]$, 
$$|Rm|+|\nabla T|+|\nabla_{g_0} g(t)|\leq C(n,\delta,K).$$
\end{prop}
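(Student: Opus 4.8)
**The plan is to prove Proposition \ref{first-esti} by the standard Bernstein/Shi-type maximum principle argument, adapted to the Hermitian Ricci flow using the evolution equations already assembled in Section 3.**

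First I would exploit the parabolicity from Lemma \ref{para-sys}: under the two-sided bound \eqref{zero-cond}, the flow equation $\partial_t g_{i\bar j}$ is uniformly parabolic with coefficients controlled by $g_0$ and its Chern curvature $\tilde R$ (hence by $K$), so standard interior parabolic Schauder/$W^{2,p}$ estimates on a slightly smaller ball already give $|\nabla_{g_0} g(t)| \le C(n,\delta,K)$ on $B_{g_0}(p,r+\tfrac34\delta)\times[0,\tau]$, and likewise a bound on $\Psi = \Gamma_{g_0} - \Gamma_g$. (Alternatively, one gets the $\Psi$-bound via the evolution equation in Lemma \ref{firstorder-diff} together with a maximum principle; but the linear-theory route is cleaner since \eqref{zero-cond} freezes the leading symbol.) I would then localize with a cutoff $\phi$ supported in $B_{g_0}(p,r+\tfrac12\delta)$, equal to $1$ on $B_{g_0}(p,r)$, whose Hessian is controlled via Lemma \ref{dist-esti} (applied with the curvature bound $K$ on $B_{g_0}(p,2r)$, using that the distance functions of $g(t)$ and $g_0$ are comparable by \eqref{zero-cond}).

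Next comes the core Bernstein step. Set $F = (A + |\Psi|^2)|Rm|^2 + |\nabla T|^2$ for a large constant $A=A(n)$ and apply $\heat{}$ using Lemma \ref{evo-cur-tor} (the $k=0$ cases for $\partial_t Rm$ and $\partial_t\nabla T$, and the $\partial_t|\Psi|^2$ equation from Lemma \ref{firstorder-diff}). The good negative terms $-|\nabla Rm|^2 - |\bar\nabla Rm|^2$ coming from $\heat|Rm|^2$, when multiplied by the large constant $A + |\Psi|^2$, absorb the cross terms $|Rm|\,|\nabla Rm|\,|\nabla\Psi|$ and $|\nabla T|\,|\nabla Rm|$; the leftover curvature-cubic terms $|Rm|^3$ and torsion-mixed terms are controlled because $|T| = |\Psi| \le C(n,K)$ is already bounded (torsion of $g$ equals that of $g_0$ up to the lower-order $\Psi$ terms, or directly $T$ stays bounded under \eqref{zero-cond} and the $\nabla_{g_0}g$-bound), and the $\tilde R$, $\nabla\tilde R$ terms are bounded by $K$. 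One then runs the maximum principle for $\phi^2 F$ on $B_{g_0}(p,r+\tfrac12\delta)\times[0,\tau]$: at an interior spacetime maximum the gradient-of-$F$ terms interact with $\nabla\phi^2$ and are reabsorbed (this is where the $\phi^2$ rather than $\phi$ weight and the Hessian bound on $\phi$ from Lemma \ref{dist-esti} are used), yielding $\phi^2 F \le C(n,\delta,K)$, hence $|Rm| + |\nabla T| \le C(n,\delta,K)$ on $B_{g_0}(p,r)\times[0,\tau]$. Combined with the earlier $|\nabla_{g_0}g|$-bound this finishes the proof; the parameter $\e_n$ is simply chosen small enough that all the algebraic absorptions involving $g^{-1}$, $g_0$ go through uniformly.

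The main obstacle I anticipate is the bookkeeping in the Bernstein step: the Hermitian curvature evolution \eqref{Rm-evo} contains torsion-coupled first-derivative terms $T * \nabla Rm$ and the time-derivative of $\Psi$ feeds $\nabla S = \nabla(\tr Rm)$ back into $\partial_t|\Psi|^2$, so one must be careful that the auxiliary quantity is chosen so that every "bad" gradient term of top order $|\nabla Rm|$ or $|\nabla^2 T|$ appears with a coefficient that the single good term from $|Rm|^2$ (resp. $|\nabla T|^2$) can dominate after multiplying by the large constant. A secondary technical point is justifying the maximum-principle argument on the (possibly non-smooth) distance function: as in Lemma \ref{dist-esti} one works within the cut locus or uses Calabi's trick, and the needed Hessian bound holds only on the annulus $d_{g_0} \in [1/\sqrt K, \cdot]$, so near $p$ one uses instead that $\phi\equiv 1$ there and the interior estimate is trivial.
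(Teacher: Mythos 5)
Your localization setup (cutoff built from $d_{g_0}$, Hessian control via Lemma \ref{dist-esti}, Calabi's trick) matches the paper, but both core steps of your argument have genuine gaps. For the bound on $\Psi=\Gamma_{g_0}-\Gamma_{g}$, the route you call ``cleaner'' --- interior Schauder/$W^{2,p}$ estimates --- does not apply as stated: by Lemma \ref{para-sys} the flow is a \emph{quasilinear system} whose leading coefficients $g^{k\bar l}$ depend on the unknown solution, and \eqref{zero-cond} only gives an $L^\infty$ bound on them, not a H\"older (or VMO) modulus; Schauder needs H\"older coefficients, and Krylov--Safonov/De Giorgi--Nash type estimates, which would supply that modulus, fail for parabolic systems. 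The fallback you mention in passing (maximum principle on Lemma \ref{firstorder-diff}) is the paper's actual route, but it is not routine: the evolution inequality \eqref{useful-evo} contains the bad term $+8|\Psi|^4$, which a cutoff maximum principle on $|\Psi|^2$ alone cannot handle. The paper kills it by considering $F=|\Psi|^2\,G(\Lambda)$ with $\Lambda=\tr_g g_0$ and $G(s)=e^{As}+B$: the inequality $\heat \Lambda\le -\frac{1}{1+\e_n}|\Psi|^2+C$ feeds a good term $-\frac{A}{1+\e_n}e^{A\Lambda}|\Psi|^4$ into $\heat F$, and one needs $Ae^{An(1-\e_n)}>90\,e^{An(1+\e_n)}$, which forces $\e_n$ to be small depending on $n$. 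This is the real reason $\e_n$ appears in the statement; your closing remark that $\e_n$ is ``simply chosen small enough that the algebraic absorptions go through'' misses this mechanism entirely (any fixed constant such as $2$ would suffice for mere metric equivalence).

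The second step has a more basic error: you assert that the leftover cubic terms $|Rm|^3$ in $\heat\bigl[(A+|\Psi|^2)|Rm|^2\bigr]$ ``are controlled because $|T|\le C(n,K)$ is already bounded.'' They are not --- a term cubic in the quantity you are trying to bound cannot be absorbed by bounded torsion or by the gradient terms $-|\nabla Rm|^2-|\bar\nabla Rm|^2$, and multiplying $|Rm|^2$ by the \emph{bounded} factor $A+|\Psi|^2$ produces no quartic good term by itself. The mechanism that closes the argument (and which the paper uses) is that, once $\Psi$ is bounded, \eqref{useful-evo} gives $\heat|\Psi|^2\le -\tfrac12(|\nabla\Psi|^2+|\bar\nabla\Psi|^2)+C$, and the identities $\bar\nabla\Psi=Rm(g(t))-Rm(g_0)$ and $\nabla T=\nabla\Psi+\Psi* T_{g_0}$ convert this into $\heat|\Psi|^2\le -c\,G^2+C$ with $G=\sqrt{|Rm|^2+|\nabla T|^2}$; since $\heat G\le C_nG^2+C_n$ (quadratic, thanks to the square root), the sum $H=G+L|\Psi|^2$ with $L$ large satisfies $\heat H\le -H^2+C$, which the cutoff maximum principle can handle. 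Your multiplicative ansatz could in principle be repaired in Shi's original style by exploiting exactly these identities (so that the first factor's good term $-|Rm|^2|\bar\nabla\Psi|^2\lesssim -|Rm|^4$ dominates the cubic terms), but as written your absorption argument does not close, and the stated justification is wrong.
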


\begin{proof}
In what follows, we will use $C_i$ to denote any generic constant depending only on $n,r,K$. For notational convenience, we will use  $\hat R$, $\hat T$ to denote the geometric quantities of $g_0$.

We first show the bound on $|\nabla_{g_0} g(t)|$. By our assumption and Lemma \ref{evo-trace-1}, the function $\Lambda=tr_gg_0$ satisfies 
\begin{align}
\heat \Lambda &\leq -\frac{1}{1+\e_n} |\Psi|^2+ C_0.
\end{align}
On the other hand, since we have 
\begin{equation}
\begin{split}
R_{i\bar jk}\,^l&=\hat R_{i\bar jk}\,^l+ \partial_{\bar j} \Psi_{ik}^l;\\
T_{ij}^k&=\hat T_{ij}^k+\Psi_{ij}^k -\Psi_{ji}^k.
\end{split}
\end{equation}

Therefore, together with  Lemma \ref{firstorder-diff}, the function $|\Psi|^2$ where $\Psi=\Gamma_{g_0}-\Gamma_{g(t)}$ satisfies 
\begin{equation}
\begin{split}\label{useful-evo}
\heat |\Psi|^2 &\leq -|\nabla \Psi|^2-|\bar\nabla \Psi|^2\\
&\quad +2Re\left[ g^{p\bar q}g^{i\bar j} g^{k\bar l}g_{r\bar s}\Psi^{\bar s}_{\bar j\bar l}(T^a_{pi}R_{a\bar qk}\,^r+\nabla_p\hat R_{i\bar qk}\,^r)\right]\\
&\leq -\frac{1}{2}\left( |\nabla \Psi|^2+|\bar\nabla \Psi|^2\right)+8|\Psi|^4+C_1
\end{split}
\end{equation}

Let $G(s)=e^{As}+B$ where $A$ and $B$ are some positive constants to be specified. Let $\phi(s)$ be a cutoff function on $[0,+\infty)$ such that $\phi\equiv 1$ on $[0,r+\delta/2]$, vanishes outside $[0,r+\delta]$ and satisfies 
$$ |\phi'|^2\leq 100\delta^{-2}\phi , \; \phi''\geq -100\phi\delta^{-2}.$$
Define $\Phi(x)=\phi(d_{g_0}(x,p))$ to be a cutoff function on $M$ where $d_{g_0}(x,p)$ is the distance from the fixed point $p$ using Hermitian metric $g_0$. By Lemma \ref{dist-esti} and the trick of Calabi, we may assume $d_{g_0}(x,p)$ to be smooth and satisfy $\ddb d_0(x,p) \leq C_3(n,K)\omega_{g_0}$ when we apply maximum principle.

Consider the function $F(x,t)=|\Psi|^2 G(\Lambda)$ on $M\times [0,\tau]$. Then on $B_{g_0}(p,r+\delta)\times [0,\tau]$, it satisfies
\begin{equation}
\begin{split}
 \heat F
&=G  \heat |\Psi|^2+ |\Psi|^2  \heat G\\
&\quad 
-2{\bf Re} \left(\Phi g^{i\bar j}G_{\bar j} \cdot \partial_i |\Psi|^2 \right)\\
&\leq -\frac{1}{2}\left(|\nabla \Psi|^2+|\bar \nabla \Psi|^2\right) (e^{A\Lambda}+B)\\
&\quad +|\Psi|^4\left[ 8(e^{A\Lambda}+B)-\frac{A}{1+\e_n}e^{A\Lambda}\right] \\
&\quad -A|\Psi|^2|\nabla \Lambda|^2 e^{A\Lambda}+2e^{A\Lambda} A |\nabla\Lambda||\Psi||\nabla\Psi|+C_4.
\end{split}
\end{equation}
If we choose $B=10e^{An(1+\e_n)}$, then we can use Cauchy inequality to simplify it as 
\begin{equation}
\begin{split}
 \heat F&\leq -\frac{1}{4}\left(|\nabla \Psi|^2+|\bar \nabla \Psi|^2\right) (e^{A\Lambda}+B)+C_5\\
 &\quad +\frac{1}{1+\e_n}|\Psi|^4\left[90e^{An(1+\e_n)}-Ae^{An(1-\e_n)}\right].
\end{split}
\end{equation}
Hence, we can choose $A$ sufficiently large such that 
$$100e^{An(1+\e_n)}-Ae^{An(1-\e_n)}<0$$
provided that $\e_n$ is small enough. With this choice of $A,B$ and $\e_n$, $F$ satisfies 
\begin{equation}\label{equ-F}
\begin{split}
 \heat F&\leq -c_6 F^2+C_7.
\end{split}
\end{equation}

Using \eqref{equ-F}, we can apply maximum principle on function $F\cdot \Phi$. If the maximum is attained at $t=0$, then the conclusion is trivially true. Suppose it is attained at $t=t_0>0$, then 
\begin{equation}\label{cut-afa}
\begin{split}
0&\leq \heat (F\Phi) \\
&\leq \Phi  \heat F+ F\heat \Phi -2{\bf Re}\left(g^{i\bar j} F_i \Phi_{\bar j} \right)\\
&\leq -c_6F^2\Phi +C_8 + F\left( C_8 \delta^{-2}+C_8\right)
\end{split}
\end{equation}
which implies $F\Phi$ is bounded above by $C_9(r^{-4}+1)$ at its maximum point. In particular, on $B_{g_0}(p,r+\delta/2)\times [0,\tau]$, 
$$|\Psi|^2 \leq C_{10}(\delta^{-4}+1).$$
This shows the bound on $|\nabla_{g_0} g(t)|$. 

For $|Rm|$ and $|\nabla T|$, the proof is similar but simpler. By Lemma \ref{evo-cur-tor} with Cauchy inequality, the function $G=\sqrt{|Rm|^2+|\nabla T|^2}$ satisfies 
\begin{align}
\heat G&\leq C_nG^{2}+C_n
\end{align}
whenever $|Rm|^2+|\nabla T|^2 \neq 0$. 

On the other hand, since we have established the estimate on $\Psi$, \eqref{useful-evo} have the following form now. 
\begin{equation}
\begin{split}
\heat |\Psi|^2 &\leq -\frac{1}{2}\left(|\nabla \Psi|^2 +|\bar\nabla\Psi|^2\right)+C_9
\end{split}
\end{equation}
on $B_{g_0}(p,r+\delta/2)\times [0,\tau]$. Because 
\begin{equation}
\begin{split}
\bar\nabla\Psi&=Rm(g(t))-Rm(g_0);\\
\nabla T&=\nabla (T-T_{g_0})+\Psi* T_{g_0}=\nabla \Psi+\Psi* T_{g_0},
\end{split}
\end{equation}
we can rewrite \eqref{useful-evo} to be 
\begin{equation}
\begin{split}
\heat |\Psi|^2 &\leq -c_{10}G^2+C_{11}.
\end{split}
\end{equation}
Then for $L(n,K,\delta)$ sufficiently large, the function $H=G+L|\Psi|^2$ satisfies 
$$\heat H\leq -H^2+C_{12}.$$
Therefore, we may use cutoff function trick again as \eqref{cut-afa} to show the bound on $B_{g_0}(p,r)\times [0,\tau]$.
\end{proof}

\begin{prop}\label{higher-est}
Suppose $g(t)$ is a solution to \eqref{HRF} on $B_{g_0}(p,r+\delta)\times [0,\tau]$ for some $p\in M$, $r,\delta>0$. If the Hermitian Ricci flow solution $g(t)$ satisfies 
\begin{align}
|Rm(g(t))|+|T|^2\leq K_0
\end{align}
on $B_{g_0}(p,r+\delta)\times [0,\tau]$ for some $K_0>0$. Let $B_m$ be such that 
$$\sup_{B_{g_0}(p,r+\delta)}\sum_{i=0}^m|\nabla^i Rm(g_0)|+|\nabla^{i}_{g_0} T(g_0)|\leq B_m.$$

Then for any $m\in \mathbb{N}$, there is $C(n,m,\delta,B_m,K_0)>0$ such that on $B_{g_0}(p,r+\frac{\delta}{m+1})\times [0,\tau]$, 
\begin{equation}
\begin{split}
|\nabla^m Rm|+|\nabla^{m} T|&\leq {C}.
\end{split}
\end{equation}
\end{prop}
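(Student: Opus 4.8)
The plan is to prove Proposition \ref{higher-est} by induction on $m$, using the interior Schauder-type machinery that underlies Shi's estimates, adapted here to the Hermitian Ricci flow. The base case $m=1$ is essentially Proposition \ref{first-esti}: under the stated curvature and torsion bound $|Rm|+|T|^2\le K_0$ one first checks that \eqref{zero-cond} holds on a slightly smaller ball for a short time (this is where one uses that the flow is a parabolic system, Lemma \ref{para-sys}, to get the $C^0$ comparison $(1+\e_n)^{-1}g_0\le g(t)\le (1+\e_n)g_0$), and then Proposition \ref{first-esti} supplies the bound on $|\nabla T|$, $|\nabla_{g_0}g(t)|$ and hence on $|\nabla Rm|$ after commuting covariant derivatives and using $\bar\nabla\Psi = Rm(g(t))-Rm(g_0)$.

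For the inductive step, assume the bound holds for all $j\le m-1$ on $B_{g_0}(p, r+\tfrac{\delta}{m})\times[0,\tau]$; I would prove it for $m$ on the smaller ball $B_{g_0}(p, r+\tfrac{\delta}{m+1})\times[0,\tau]$. Set $G_m = |\nabla^m Rm|^2 + |\nabla^m T|^2$. By Lemma \ref{evo-cur-tor}, together with the already-established bounds on all lower-order quantities $|\nabla^j Rm|, |\nabla^j T|$ for $j\le m-1$ and the $C^0$ equivalence of $g(t)$ with $g_0$, one obtains a differential inequality of the form
\begin{equation*}
\heat G_m \le -c\,|\nabla^{m+1}Rm|^2 - c\,|\nabla^{m+1}T|^2 + C\, G_m + C,
\end{equation*}
where $C$ depends on $n,m,\delta,B_m,K_0$. (The crucial point is that every term in the evolution equations of Lemma \ref{evo-cur-tor} that involves $\nabla^{m+1}$ of $Rm$ or $T$ appears paired with a lower-order factor, so it is absorbed by the good negative terms after Cauchy--Schwarz; this is exactly the structure Streets--Tian exploit in \cite[Section 7]{StreetTian2011}.) One then runs the standard Bernstein trick: form the auxiliary function
\begin{equation*}
H_m = \big(G_m + L\,G_{m-1}\big)\,\Phi^2,
\end{equation*}
where $G_{m-1}$ controls the lower-order jet, $L = L(n,m,\delta,B_m,K_0)$ is chosen large so that the negative term $-c|\nabla^m(\cdot)|^2$ coming from $L\,\heat G_{m-1}$ dominates the $+CG_m$ in $\heat G_m$, and $\Phi$ is a spatial cutoff supported in $B_{g_0}(p, r+\tfrac{\delta}{m})$, equal to $1$ on $B_{g_0}(p, r+\tfrac{\delta}{m+1})$, built as $\phi(d_{g_0}(x,p))$ with $d_{g_0}$ regularized via Lemma \ref{dist-esti} and Calabi's trick so that $\ddb d_{g_0}\le C\omega_{g_0}$. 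Applying the maximum principle to $H_m$ on $M\times[0,\tau]$: if the max is at $t=0$ it is bounded by $B_m$; otherwise, at an interior maximum one gets $0\le \heat H_m \le -c\,H_m^2 + C(\delta^{-4}+1)$ after absorbing the gradient cross-terms $-2\,{\bf Re}(g^{i\bar j}\p_i H_m\,\p_{\bar j}\Phi^2)$ using $|\nabla\Phi|^2\le C\delta^{-2}\Phi$ and the good negative terms, which forces $H_m \le C(n,m,\delta,B_m,K_0)$.

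The main obstacle is bookkeeping the cutoff exponents and the interplay between scales: to run the Bernstein argument at level $m$ one needs the level $m$ bound on $|\nabla^m Rm|$ to interact with the level $m-1$ bound on $|\nabla^m(\cdot)|$ appearing inside $\heat G_{m-1}$ (which requires $\nabla^{m}Rm$, $\nabla^{m}T$ in $\heat G_{m-1}$, not just $\nabla^{m-1}$), so in practice one should carry the pair $(G_m, G_{m-1})$ jointly through the whole induction rather than treating the levels one at a time — equivalently, one proves the combined statement that $\sum_{j\le m}|\nabla^j Rm|^2+|\nabla^j T|^2$ is bounded. A secondary technical point is justifying the use of the maximum principle on a noncompact manifold: since all the relevant functions are multiplied by the compactly-supported cutoff $\Phi$, the maximum is attained on a compact set, so no completeness-at-infinity argument is needed beyond what is already implicit in Proposition \ref{first-esti}.
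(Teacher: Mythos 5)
Your overall strategy --- induction on $m$, the evolution inequalities of Lemma \ref{evo-cur-tor}, and a localized maximum principle with a cutoff $\Phi=\phi(d_{g_0}(\cdot,p))$ controlled via Lemma \ref{dist-esti} --- is the same as the paper's, but the key algebraic step fails. You combine the two levels additively, $H_m=(G_m+L\,G_{m-1})\Phi^2$. From $\heat G_{m-1}\le -c\,G_m+C$ and $\heat G_m\le -c|\nabla^{m+1}(\cdot)|^2+C\,G_m+C$ this only yields the \emph{linear} decay $\heat (G_m+L\,G_{m-1})\le -c'G_m+C'$; the quadratic inequality $\heat H_m\le -c\,H_m^2+C$ that you then invoke at the interior maximum does not follow from it, and the quadratic term is exactly what the cutoff argument needs. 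Indeed, at an interior maximum of $(G_m+L\,G_{m-1})\Phi^2$ the cutoff contributes terms of size $C_\delta\,(G_m+L\,G_{m-1})$ (from $\Delta\Phi$ and the gradient cross terms), so one only reaches $c'G_m\Phi^2\le C\Phi^2+C_\delta(G_m+L\,G_{m-1})$; where $\Phi$ is small the right-hand side cannot be absorbed, no matter how large $L$ is, and no bound on $\sup H_m$ results. The paper (following Shi) uses the \emph{multiplicative} quantity $H=H_k(H_{k-1}+A)$: because the factor $H_k$ multiplies $\heat H_{k-1}\le -\tfrac12 H_k+C$, one obtains the genuine quadratic term $-\tfrac12 H_k^2$, which is comparable to $-cH^2$ since $A\le H_{k-1}+A\le A+C$ by the induction hypothesis; the new cross term involving $\nabla H_k\cdot\nabla H_{k-1}$ is absorbed using $|\nabla H_i|\le\sqrt{H_iH_{i+1}}$, Cauchy--Schwarz, and the good term $-\tfrac12 H_{k+1}$ in $\heat H_k$ multiplied by $A$. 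With $\heat H\le -c_4H^2+C_5$ in hand, the cutoff maximum principle closes exactly as you describe.

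A smaller point: your base case $m=1$ leans on Proposition \ref{first-esti}, whose hypothesis \eqref{zero-cond} (the $(1+\e_n)$-closeness of $g(t)$ to $g_0$) is not implied by $|Rm|+|T|^2\le K_0$ on $[0,\tau]$ unless $\tau\lesssim K_0^{-1}$; for general $\tau$ one only gets equivalence with constant $e^{C_nK_0\tau}$. Moreover, the identity $\bar\nabla\Psi=Rm(g(t))-Rm(g_0)$ recovers $Rm$ from first derivatives of the metric, not $|\nabla Rm|$. Neither detour is needed: start the induction at $m=0$, where the claim is exactly the hypothesis $|Rm|+|T|^2\le K_0$, and let the (corrected, product-based) inductive step produce $m=1$ along with all higher orders, with the dependence on $B_m$ entering only through the possibility that the maximum of the localized quantity occurs at $t=0$.
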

\begin{proof}
The proof is similar to that in Proposition \ref{Improved-est}. We prove the assertion by induction on $m$. In the proof, we will denote any generic constant depending only on $n,m,\delta,B_m,K_0$ by $C_i$. Assumption implies that the conclusion is true for $m=0$. Assume it is true for $m=0,1,...,k-1$ for some $k\in \mathbb{N}$. By Lemma \ref{evo-cur-tor} and the induction hypothesis, the function $H_i=|\nabla^i Rm|^2+|\nabla^{i}T|^2$ satisfies 
\begin{equation}
\begin{split}
\heat H_{k-1}&\leq -\frac{1}{2} H_k +C_1\\
\heat H_k &\leq -\frac{1}{2}H_{k+1} +C_1H_k +C_1.
\end{split}
\end{equation}
Define the function $H=H_k (H_{k-1}+A)$ where $A$ is some large constant to be specified later. Then 
\begin{equation}
\begin{split}
\heat H&\leq  H_k \heat H_{k-1}+ (A+H_{k-1}) \heat H_k \\
&\quad -2{\bf Re}\left(g^{i\bar j} \partial_i H_k\cdot  \partial_{\bar j} H_{k-1} \right)\\
&\leq -\frac{1}{2} H_k^2+C_2H_k  +C_2H_k \sqrt{H_{k+1}H_{k-1}}\\
&\quad +(A+H_{k-1})\left[ -\frac{1}{2}H_{k+1}+C_1 +C_1H_k \right]
\end{split}
\end{equation}
where we have used the fact that $|\nabla H_i|\leq  \sqrt{H_i H_{i+1}}$. By Cauchy inequality again, if $A$ is sufficiently large, then
\begin{equation}
\begin{split}
\heat H&\leq H_k^2 \left( -\frac{1}{2}+\frac{C_n}{A}\right) +C_3 H_k +C_3\\
&\leq -\frac{1}{4}H_k^2 +C_3 H_k  +C_3\\
&\leq -c_4H^2 +C_5
\end{split}
\end{equation}
on $B_{g_0}(p,r+\delta/k)\times [0,\tau]$. Now the evolution equation is in the standard form. Let $d_{g_0}(x,p)$ and $\Phi=\phi(d_{g_0}(x,p))$ where $\phi$ is a cutoff function on $[0,+\infty)$ such that $\phi\equiv 1$ on $[0,r+\frac{\delta}{k+1}]$, vanishes outside $[0,r+\frac{\delta}{k}]$ and satisfies $|\phi'|^2\leq C_6\phi$ and $\phi''\geq -C_6\phi$. By our assumption, $g(t)$ is uniformly equivalent to $g_0$. Together with Lemma \ref{dist-esti}, we conclude that if the function $H \cdot \Phi$  achieves its maximum at $(x_0,t_0)$ where $t_0>0$, then 
\begin{equation}
\begin{split}
\heat (H\Phi)&\leq -c_4 H^2\Phi +C_6H + C_6.
\end{split}
\end{equation}
By maximum principle, $H_k \leq C_7$ on $B_{g_0}(p,r+\frac{\delta}{k})\times [0,\tau]$.
\end{proof}

In fact, the higher order derivatives of $Rm$ and $T$ will be instantly bounded after $g(t)$ evolves. 
\begin{prop}\label{Improved-est}
Suppose $g(t)$ is a solution to \eqref{HRF} on $B_{g_0}(p,r+\delta)\times [0,\tau]$ for some $p\in M$, $r,\delta>0$. If the Hermitian Ricci flow solution $g(t)$ satisfies 
\begin{align}\label{assump-cur}
|Rm|+|T|^2\leq K_0
\end{align}
on $B_{g_0}(p,r+\delta)\times [0,\tau]$ for some $K_0>0$. Then for any $m\in \mathbb{N}$, there is $C_0(n,m,\tau,\delta,K_0)>0$ such that on $B_{g_0}(p,r+\frac{\delta}{m+1})\times [0,\tau]$, 
\begin{equation}
\begin{split}
|\nabla^m Rm|+|\nabla^{m} T|&\leq {C_0}t^{-m/2}.
\end{split}
\end{equation}
\end{prop}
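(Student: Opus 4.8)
The plan is to follow the standard Bernstein–Shi iteration, now upgraded to produce the decay rate $t^{-m/2}$, by running an induction on $m$ in which the inductive quantity carries the correct power of $t$ as a weight. The base case $m=0$ is exactly the hypothesis \eqref{assump-cur}. For the inductive step, suppose $|\nabla^j Rm|+|\nabla^j T|\le C t^{-j/2}$ on a slightly larger ball for all $j\le k-1$; I want to bound $H_k:=|\nabla^k Rm|^2+|\nabla^k T|^2$ by $C t^{-k}$ on $B_{g_0}(p,r+\tfrac{\delta}{k+1})\times[0,\tau]$. As in Proposition \ref{higher-est}, Lemma \ref{evo-cur-tor} together with the induction hypothesis gives
\begin{equation}
\begin{split}
\heat H_{k-1}&\le -\tfrac12 H_k + C_1 t^{-(k-1)},\\
\heat H_k&\le -\tfrac12 H_{k+1}+C_1 H_k+C_1 t^{-k}+C_1 t^{-(k+1)}\sqrt{H_k},
\end{split}
\end{equation}
where the precise powers of $t$ on the right come from distributing the decay across the schematic terms $\nabla^j T*\nabla^{k+1-j}Rm$ etc. using the inductive bounds. (One should be a little careful here: the genuinely dangerous contributions are the ones containing $\nabla^k$ of curvature or torsion, which are controlled by $H_k$ itself or by the good negative term $-H_{k+1}$; every term built purely from lower derivatives contributes only a power of $t^{-1}$ matching the bookkeeping above.)

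Next I would form the weighted test function in the spirit of Shi. Set $\Theta:=t^{k}H_k\,(A+t^{k-1}H_{k-1})$ for a large constant $A=A(n,k,\delta,K_0)$, restricted by a spatial cutoff $\Phi=\phi(d_{g_0}(\cdot,p))$ with $\phi\equiv1$ on $[0,r+\tfrac{\delta}{k+1}]$ and supported in $[0,r+\tfrac{\delta}{k}]$, exactly as in Proposition \ref{higher-est}; Lemma \ref{dist-esti} and Calabi's trick let us treat $d_{g_0}$ as smooth with $\ddb d_{g_0}\le C\omega_{g_0}$ at an interior maximum, and $g(t)$ is uniformly equivalent to $g_0$ on the support of $\Phi$ by \eqref{assump-cur} combined with Proposition \ref{first-esti}. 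Computing $\heat\Theta$ and using $|\nabla H_j|\le\sqrt{H_jH_{j+1}}$ together with the Cauchy inequality, the extra terms produced by differentiating the weights $t^k$ and $t^{k-1}$ are $k t^{k-1}H_k(A+t^{k-1}H_{k-1})$ and similar, which are absorbed into the good negative terms $-t^{k}H_kH_{k+1}$, $-t^kH_k^2$ once $A$ is large; one is left with a differential inequality of the form $\heat\Theta\le -c\,\Theta^2+C$ on the support of $\Phi$. Applying the maximum principle to $\Theta\Phi$ exactly as in \eqref{cut-afa} gives $\Theta\le C$ at its maximum, hence $t^kH_k\le C$, i.e. $|\nabla^k Rm|+|\nabla^k T|\le C\,t^{-k/2}$ on the smaller ball, closing the induction.

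The main obstacle is the bookkeeping of the powers of $t$ in the reaction terms: one must verify that after distributing the inductive decay $|\nabla^j(\cdot)|\le Ct^{-j/2}$ over the schematic products in Lemma \ref{evo-cur-tor}, no term worse than $t^{-k}$ (for $\heat H_k$) or $t^{-(k-1)}$ (for $\heat H_{k-1}$) appears, and that the cross term $C_1 t^{-(k+1)}\sqrt{H_k}$ genuinely is controllable — this is where one uses that it may be estimated by $\varepsilon t^{-k}H_k + C_\varepsilon t^{-(k+2)}$ and the first piece absorbed into $C_1H_k$ after incorporating the $t^k$ weight, which is precisely why the weight $t^k$ (rather than, say, $t^{k+1}$) is the right choice. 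Everything else is a direct adaptation of the compact-support maximum principle argument already carried out in Propositions \ref{first-esti} and \ref{higher-est}.
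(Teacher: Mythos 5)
Your proposal follows essentially the same route as the paper: induction on $m$, the time-weighted quantities $t^kH_k$ (the paper's $G_k$), the product test function $G_k(A+G_{k-1})$ with large $A$, a cutoff in $d_{g_0}$ via Lemma \ref{dist-esti}, and the maximum principle exactly as in Proposition \ref{higher-est}. The only discrepancies are cosmetic bookkeeping in your unweighted inequalities (e.g.\ the mixed term should carry $t^{-(k+1)/2}\sqrt{H_k}$ rather than $t^{-(k+1)}\sqrt{H_k}$, and the $j=k$ term gives $Ct^{-1/2}H_k$), which disappear after multiplying by the weight $t^k$, so the argument closes just as in the paper.
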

\begin{proof}
In what follows, we will use $C_i$ to denote any generic constants depending only on $n,m,S,\delta,K_0$. We prove the assertion by induction on $m$. Assumption ensures that it is true when $m=0$. Assume it is true when $m=0,1,...,k-1$ for some $k\in \mathbb{N}$. Let $G_i=t^i H_i$ where $H_i$ is defined in the exactly same way as in the proof of Proposition \ref{higher-est}. By Lemma \ref{evo-cur-tor} and the induction hypothesis, 
\begin{equation}
\begin{split}
\heat G_{k-1} &\leq -\frac{1}{2}G_k t^{-1} +C_1 t^{-1}\\
\heat  G_k&\leq -\frac{1}{2}G_{k+1}t^{-1}+C_1 +C_1G_k t^{-1} .
\end{split}
\end{equation}
Consider the new function $G=G_k(A+G_{k-1})$. Argue as in the proof of Proposition \ref{higher-order-est}
\begin{equation}
\begin{split}
\heat G&\leq G_k^2t^{-1} \left( -\frac{1}{2}+\frac{C_n}{A}\right) +C_3 G_k t^{-1} +C_3\\
&\leq -\frac{1}{4}G_k^2t^{-1} +C_3 G_k t^{-1} +C_3\\
&\leq -c_4G^2 t^{-1}+C_5 G t^{-1} +C_5
\end{split}
\end{equation}
on $B_{g_0}(p,r+\delta/k)$ provided that $A$ is sufficiently large.

Let $\phi(s)$ be a cutoff function on $[0,+\infty)$ such that $\phi\equiv 1$ on $[0,r+\frac{\delta}{k+1}]$, vanishes outside $[0,r+\delta/k]$ and satisfies 
$$ |\phi'|^2\leq C_6\phi , \; \phi''\geq -C_6\phi.$$
Define $\Phi(x)=\phi(d_{g_0}(x,p))$ to be a cutoff function on $M$ where $d_{g_0}(x,p)$ is the distance from the fixed point $p$ using the Hermitian metric $g_0$. By \eqref{assump-cur}, flow equation \eqref{HRF} and Lemma \ref{dist-esti}, if the function $F=\Phi G$ attains its maximum at $(x_0,t_0)$ where $t_0>0$, then at this point
\begin{equation}
\begin{split}
0&\leq \heat F\\
&\leq \Phi \heat G+ G\heat \Phi +2G\frac{|\nabla\Phi|^2}{\Phi}\\
&=-c_4 F^2 \Phi^{-1} t^{-1} +C_7F \Phi^{-1} t^{-1}+C_7.
\end{split}
\end{equation}
Hence, $F$ is bounded from above by some constant $C_7$ at this point and hence on $M\times [0,\tau]$. If $t_0=0$, then the conclusion trivially holds. Hence, the statement is true for $m=k$. By induction, this completes the proof.
\end{proof}

{ 
\section{Short-time existence under bounded geometry}
In this section, we consider the short time existence to \eqref{HRF} on complete noncompact Hermitian manifolds with bounded geometry of infinity order. Let us first recall the definition of bounded geometry:
\begin{defn}\label{boundedgeom} Let $(M^n, g)$ be a complete Hermitian manifold. Let  $k\ge 1$ be an integer and $0<\alpha<1$. $g$ is said to have   bounded geometry of   order $k+\alpha$ if there are positive numbers $r, \kappa_1, \kappa_2$  such that at every $p\in M$ there is a neighborhood $U_p$ of $p$, and local biholomorphism $\xi_{p}$ from $D(r)$ onto $U_p$ with $\xi_p(0)=p$ satisfying  the following properties:
  \begin{itemize}
    \item [(i)] the pull back metric $\xi_p^*(g)$   satisfies:
    $$
    \kappa_1 g_e\le \xi_p^*(g)\le \kappa_2 g_e$$ where $g_e$ is the standard metric on $\C^n$;
    \item [(ii)] the components $g_{\ijb}$ of $\xi_p^*(g)$ in the natural coordinate of $D(r)\subset \C^n$  are uniformly bounded in the standard $C^{k+\alpha}$ norm in $D(r)$ independent of $p$.
  \end{itemize}
$(M,g)$ is said to have bounded geometry of infinity order if instead of (ii) we have for any $k$, the $k$-th derivatives of $g_\ijb$ in $D(r)$ are bounded by a constant independent of $p$. $g$ is said to have bounded geometry of infinite order on a compact set $\Omega$ if (i) and (ii) are true for all $k$ for all $p\in \Omega$.
 \end{defn}

From Lemma \ref{para-sys}, we see that the Hermitian Ricci flow equation is strongly parabolic if $g(t)$ is uniformly equivalent to some fixed metric, say for example $g_0$. Moreover, we can freely replace the Chern connection in Lemma \ref{para-sys} by the Levi-Civita connection since $g_0$ is assumed to have bounded geoemtry of infinity order. The short-time existence result will then follow by a standard inverse function theorem argument. For more details, we refer readers  to \cite[section 3-4]{Shi1989}, \cite[ Chapter VII, Theorem 7.1]{LSU1968}, \cite[Section 4]{SSS1} and \cite[Theorem  3.7.1]{Bamler2011}.
\begin{thm}\label{short-time-1}
Let $(M,g_0)$ be a complete noncompact Hermitian manifold with bounded geometry of infinity order. Then there is $\tau(n,g_0)>0$ such that \eqref{HRF} has a solution on $M\times [0,\tau]$. Moreover,  on $M\times [0,\tau]$, we have 
$$(1+\e_n)^{-1}g_0\leq g(t)\leq (1+\e_n) g_0$$
where $\e_n$ is the constant in Proposition \ref{first-esti}.
\end{thm}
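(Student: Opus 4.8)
The plan is to reduce the short-time existence of \eqref{HRF} to a standard parabolic existence result by exploiting the form of the equation displayed in Lemma \ref{para-sys}. Fixing the background metric $g_0$, Lemma \ref{para-sys} writes the flow as
$$\frac{\partial}{\partial t}g_{i\bar j}=\frac12 g^{k\bar l}\left(\tilde\nabla_k\tilde\nabla_{\bar l}+\tilde\nabla_{\bar l}\tilde\nabla_k\right)g_{i\bar j}+\text{(lower order in }g\text{)},$$
where the coefficients $g^{k\bar l}$ of the second-order operator depend on $g$ itself but not its derivatives, so that as long as $g(t)$ stays uniformly equivalent to $g_0$ the operator on the right is uniformly (strongly) elliptic. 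Because $g_0$ has bounded geometry of infinite order, all the geometric quantities appearing as coefficients ($\tilde\Gamma$, $\tilde R$, and their derivatives) are bounded in the $C^\infty$ sense in the uniform local charts $\xi_p$, and moreover we may replace the Chern connection $\tilde\nabla$ by the Levi-Civita connection of $g_0$ at the cost of torsion-type terms which are again uniformly bounded. Thus \eqref{HRF} is, in the uniform local coordinates, a uniformly parabolic quasilinear system with coefficients bounded in $C^\infty$, fitting the framework of Shi's existence argument.

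The first step is therefore to set up the right function space: following \cite[Section 3--4]{Shi1989}, \cite[Chapter VII, Theorem 7.1]{LSU1968}, \cite[Section 4]{SSS1} and \cite[Theorem 3.7.1]{Bamler2011}, I would work with the Banach space of symmetric two-tensors $h$ with $g_0+h$ Hermitian and $\|h\|_{C^{k+\alpha}}$ bounded uniformly over the charts (for a fixed large $k$), and consider the nonlinear map $\mathcal{N}(h)=\partial_t(g_0+h)+S_{g_0+h}$. Near $h=0$ and for small time the linearization of $\mathcal{N}$ is the uniformly parabolic operator described above with a Cauchy condition, which is invertible on the appropriate parabolic Hölder spaces by the classical linear theory (the global solvability following from the uniform local estimates patched together, exactly as in the references). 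An inverse function theorem argument then produces a solution $g(t)=g_0+h(t)$ on $M\times[0,\tau]$ for some $\tau=\tau(n,g_0)>0$, with $h(t)$ small in $C^{k+\alpha}$; smoothness for positive time is automatic from parabolic bootstrapping, and one recovers bounded geometry of the solution via the a priori estimates of Section \ref{Shi-type}.

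The remaining point is the quantitative metric equivalence $(1+\e_n)^{-1}g_0\le g(t)\le(1+\e_n)g_0$ with $\e_n$ the constant of Proposition \ref{first-esti}. This follows by shrinking $\tau$ if necessary: since $h(0)=0$ and $\partial_t h$ is bounded on the existence interval (the right-hand side $-S_{g(t)}$ is controlled once $g(t)$ is $C^2$-close to $g_0$, which it is by construction), we get $\|h(t)\|_{C^0}\le C\,t$ uniformly, hence $g(t)$ is as close to $g_0$ as we like for $t\in[0,\tau]$ after decreasing $\tau$; in particular the inequality with $\e_n$ holds. I expect the main obstacle to be purely technical rather than conceptual: carefully verifying that the linearized operator is globally invertible on the weighted/uniform Hölder spaces over the noncompact manifold — that is, assembling the uniform local parabolic Schauder estimates (valid because of bounded geometry of infinite order) into a global a priori estimate with no loss of constants — and confirming that the nonlinear remainder in $\mathcal{N}$ is a smooth map between the chosen Banach spaces with the right quadratic estimate so that the implicit function theorem applies. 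All of this is standard once the bounded-geometry hypothesis is in place, which is precisely why the references are invoked in lieu of a detailed proof.
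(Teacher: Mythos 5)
Your proposal is correct and follows essentially the same route as the paper: both use Lemma \ref{para-sys} to recast \eqref{HRF} as a DeTurck-type uniformly parabolic quasilinear system (swapping the Chern connection of $g_0$ for its Levi-Civita connection, harmless under bounded geometry of infinite order), then invoke the standard inverse-function-theorem/Schauder machinery of Shi, LSU, SSS and Bamler's Theorem 3.7.1, with the $(1+\e_n)$-closeness obtained by shrinking the existence time since $\partial_t g=-S$ stays bounded. The paper simply cites Bamler's proof where you sketch the functional-analytic setup explicitly; the content is the same.
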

\begin{proof}
Since $g_0$ has bounded geomtry of infinity order, we are free to interchange the Levi-Civita connection of $g_0$ and the Chern connection of $g_0$. Therefore by Lemma \ref{para-sys}, the equation of the Hermitian Ricci flow has the form 
$$(\partial_t-g^{AB}\tilde\nabla^L_A\tilde\nabla^L_B) g(t)=Q(g,\tilde\nabla^L g)$$
where $\tilde\nabla^L$ denotes the Levi-civita connection of $g_0$. Hence, it has exactly same form as the Deturck Ricci flow. Since we assume $g_0$ to have bounded geometry of infinity order,   the  proof of \cite[Theorem  3.7.1]{Bamler2011} can be carried over (the first case in the proof). It is clear from the argument in \cite[Theorem  3.7.1]{Bamler2011} that $g(t)$ can be as close to $g_0$ as we wish by  shrinking the existence time.
\end{proof}

\section{General Short-time existence on $M$}
In this section, we will show that one can construct a solution to \eqref{HRF} with uniformly bounded $|Rm|+|T|^2$ if $g_0$ has bounded $|Rm(g_0)|+|T_{g_0}|^2+|Rm^L(g_0)|$. In the celebrated work by Shi \cite{Shi1989}, Shi showed that in fact the constructed solution of the DeTurck Ricci flow will have bounded curvature by establishing an integral estimate. And therefore, the curvature of the corresponding Ricci flow is uniformly bounded for a short time. For the Hermitian Ricci flow, the integral estimate is a bit tedious due to the presence of torsion. To bypass the complicated integration argument, we take an alternative path using the idea in \cite{LeeTam2017}. By the work of \cite{Shi1989,Tam2010}, there is an exhaustion function $\rho$ with $|\partial \rho|^2+|\ddb \rho|\leq C$ if $g_0$ has  bounded Riemannian curvature because the Levi-Civita connection and the Chern connection only differ by the torsion (see for example \cite{ZhengYang2016}). In this section, we will assume $(M,g_0)$ to be a complete noncompact Hermitian manifold satisfying the followings.
\begin{enumerate}
\item[\bf (A)] $\sup_M |Rm(g_0)|+|T_{g_0}|^2 \leq K_0$;
\item[\bf (B)] There is an exhaustion function $\rho\geq 1$ such that 
$$\sup_M |\partial\rho|^2+|\ddb \rho|\leq K_0.$$
\end{enumerate}

We will proceed as in \cite{LeeTam2017}. Let $\kappa\in (0,1)$, $f:[0,1)\to[0,\infty)$ be the function:
\be\label{e-exh-1}
 f(s)=\left\{
  \begin{array}{ll}
    0, & \hbox{$s\in[0,1-\kappa]$;} \\
    -\displaystyle{\log \lf[1-\lf(\frac{ s-1+\kappa}{\kappa}\ri)^2\ri]}, & \hbox{$s\in (1-\kappa,1)$.}
  \end{array}
\right.
\ee
Let   $\varphi\ge0$ be a smooth function on $\R$ such that $\varphi(s)=0$ if $s\le 1-\kappa+\kappa^2 $, $\varphi(s)=1$ for $s\ge 1-\kappa+2 \kappa^2 $
\be\label{e-exh-2}
 \varphi(s)=\left\{
  \begin{array}{ll}
    0, & \hbox{$s\in[0,1-\kappa+\kappa^2]$;} \\
    1, & \hbox{$s\in (1-\kappa+2\kappa^2,1)$.}
  \end{array}
\right.
\ee
such that $\displaystyle{\frac2{ \kappa^2}}\ge\varphi'\ge0$. Define
 $$\mathfrak{F}(s):=\int_0^s\varphi(\tau)f'(\tau)d\tau.$$
 
Here we collect some useful lemmas from \cite{LeeTam2017}.
\begin{lma}\label{l-exhaustion-1} Suppose   $0<\kappa<\frac18$. Then the function $\mathfrak{F}\ge0$ defined above is smooth and satisfies the following:
\begin{enumerate}
  \item [(i)] $\mathfrak{F}(s)=0$ for $0\le s\le 1-\kappa+\kappa^2$.
  \item [(ii)] $\mathfrak{F}'\ge0$ and for any $k\ge 1$, $\exp( -k\mathfrak{F})\mathfrak{F}^{(k)}$ is uniformly  bounded.
  \item [(iii)]  For any $ 1-2\kappa <s<1$, there is $\tau>0$ with $0<s -\tau<s +\tau<1$ such that
 \bee
 1\le \exp(\mathfrak{F}(s+\tau)-\mathfrak{F}(s-\tau))\le (1+c_2\kappa);\ \ \tau\exp(\mathfrak{F}(s_0-\tau))\ge c_3\kappa^2
 \eee
  for some absolute constants  $c_2>0, c_3>0$.
\end{enumerate}
\end{lma}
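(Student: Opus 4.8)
The plan is to analyze the explicit function $\mathfrak{F}(s)=\int_0^s\varphi(\tau)f'(\tau)\,d\tau$ directly from the formulas \eqref{e-exh-1}--\eqref{e-exh-2} for $f$ and $\varphi$. Smoothness is immediate: $f$ is smooth on $(1-\kappa,1)$ since $1-\left(\frac{s-1+\kappa}{\kappa}\right)^2$ stays in $(0,1]$ there, and $f$ together with all its derivatives vanishes as $s\downarrow 1-\kappa$ because near $s=1-\kappa$ we have $f(s)\sim -\log(1-(\frac{s-1+\kappa}{\kappa})^2)\sim \frac{(s-1+\kappa)^2}{\kappa^2}\to 0$ and the same for derivatives; hence $\varphi f'$ is smooth on $[0,1)$, and so is its integral $\mathfrak{F}$.

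For (i), note $\varphi(\tau)=0$ for $\tau\le 1-\kappa+\kappa^2$, so the integrand vanishes on $[0,1-\kappa+\kappa^2]$ and thus $\mathfrak{F}(s)=0$ there. For the first half of (ii), $\mathfrak{F}'=\varphi f'\ge 0$ since both factors are nonnegative. The bound on $\exp(-k\mathfrak{F})\mathfrak{F}^{(k)}$ is the computational heart of the lemma: I would write $\mathfrak{F}^{(k)}$ via Leibniz as a sum of terms $\varphi^{(j)}(s)f^{(k-j)}(s)$; on the region where $\varphi$ and its derivatives are nonzero but $\varphi\ne 1$, namely $s\in[1-\kappa+\kappa^2,1-\kappa+2\kappa^2]$, the function $f$ stays bounded with bounded derivatives (this is a fixed compact subinterval of $(1-\kappa,1)$ once $\kappa<\tfrac18$), so those terms are bounded and $\exp(-k\mathfrak{F})\le 1$ handles them; on the region $s\ge 1-\kappa+2\kappa^2$ where $\varphi\equiv 1$, we have $\mathfrak{F}^{(k)}=f^{(k)}$ and $\mathfrak{F}=\text{const}+f$ for $s$ large enough that $\varphi\equiv 1$ on the whole support — more precisely $\mathfrak{F}(s)=f(s)+(\mathfrak{F}(1-\kappa+2\kappa^2)-f(1-\kappa+2\kappa^2))$, so $\exp(-k\mathfrak{F})\mathfrak{F}^{(k)}=e^{-kc}\,e^{-kf}f^{(k)}$, and the claim reduces to showing $e^{-kf}f^{(k)}$ is bounded on $(1-\kappa,1)$. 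Writing $u=1-\left(\frac{s-1+\kappa}{\kappa}\right)^2$ so $f=-\log u$ and $e^{-kf}=u^k$, and computing that $f^{(k)}$ is a rational function in $s$ with denominator a power of $u$ of order at most $k$ (each differentiation raises the pole order by exactly one, starting from $f'=\tfrac{2(s-1+\kappa)/\kappa^2}{u}$), one sees $u^k f^{(k)}$ has no pole and is bounded on the bounded interval.

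For (iii), fix $s\in(1-2\kappa,1)$. The idea is to choose $\tau$ comparable to $\kappa^2\cdot(1-s)$ or so that the oscillation of $\mathfrak{F}$ over $[s-\tau,s+\tau]$ is controlled: since $\mathfrak{F}(s+\tau)-\mathfrak{F}(s-\tau)=\int_{s-\tau}^{s+\tau}\varphi f'\le \int_{s-\tau}^{s+\tau}f'=f(s+\tau)-f(s-\tau)$, and $f$ is increasing, one wants $\tau$ small enough that this difference is $\le \log(1+c_2\kappa)$. Because $f'(s)=\frac{2(s-1+\kappa)/\kappa^2}{1-((s-1+\kappa)/\kappa)^2}$ can be as large as order $(1-s)^{-1}$ near $s=1$, a safe choice is $\tau$ a small multiple of $\kappa^2\,u(s)=\kappa^2(1-((s-1+\kappa)/\kappa)^2)$; this makes $f(s+\tau)-f(s-\tau)$ bounded by $O(\kappa)$ while keeping $s\pm\tau\in(1-\kappa,1)$, and simultaneously $\tau\exp(\mathfrak{F}(s-\tau))\ge \tau\ge c_3\kappa^2$ after noting $\mathfrak F\ge 0$ — though to get the sharper lower bound in the form stated one uses $\exp(\mathfrak{F}(s-\tau))\ge 1$ directly, and chooses $\tau$ bounded below by $c_3\kappa^2$ by a slightly more careful split into the cases $s$ near $1-2\kappa$ (where $f'$ is moderate and $\tau$ can be taken of order $\kappa^2$ outright) versus $s$ near $1$. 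The main obstacle is item (ii): bookkeeping the pole orders of $f^{(k)}$ against the exponential weight $e^{-kf}=u^k$ and verifying the cancellation is the one genuinely technical computation, and getting the $\kappa$-uniformity in all the constants requires care that the bounded intervals involved do not shrink to zero as $\kappa\to 0$ in a way that spoils the bounds (which is why the hypothesis $\kappa<\tfrac18$ enters).
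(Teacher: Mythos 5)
The paper itself gives no proof of this lemma; it is quoted verbatim from \cite{LeeTam2017}, so the only fair comparison is with a direct verification from the explicit formulas \eqref{e-exh-1}--\eqref{e-exh-2}, which is exactly what you attempt and is the right route. Items (i) and (ii) of your argument are essentially correct: (i) is immediate from $\varphi\equiv 0$ on $[0,1-\kappa+\kappa^2]$, and for (ii) your bookkeeping $f^{(k)}=P_k(s)/u^k$ with $u=1-\bigl(\frac{s-1+\kappa}{\kappa}\bigr)^2$ and $e^{-kf}=u^k$ does show $e^{-k\mathfrak{F}}\mathfrak{F}^{(k)}$ is bounded on the region $\{\varphi\equiv1\}$, while on the transition band $[1-\kappa+\kappa^2,1-\kappa+2\kappa^2]$ one has $u\ge 1-4\kappa^2$ so all terms $\varphi^{(j)}f^{(k-j)}$ are bounded and $e^{-k\mathfrak{F}}\le1$. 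Two small corrections: your claim that ``$f$ together with all its derivatives vanishes as $s\downarrow 1-\kappa$'' is false ($f''\to 2/\kappa^2$, so $f$ is only $C^1$ there); smoothness of $\mathfrak{F}$ instead follows because $\varphi$ kills a whole neighborhood of $s=1-\kappa$, which you in effect already use. Also, ``uniformly bounded'' in (ii) means uniformly in $s$ with constants allowed to depend on $k$ and $\kappa$ (it cannot be $\kappa$-uniform, since $\mathfrak{F}''\sim 2/\kappa^2$ just to the right of the transition band), so the $\kappa$-uniformity worry you raise there is not an issue; only the constants $c_2,c_3$ in (iii) need to be absolute.

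The genuine gap is in (iii), in the lower bound $\tau\exp(\mathfrak{F}(s-\tau))\ge c_3\kappa^2$ when $s$ is close to $1$. Your fallback, ``use $\exp(\mathfrak{F}(s-\tau))\ge 1$ directly and choose $\tau$ bounded below by $c_3\kappa^2$,'' cannot work: the constraint $s+\tau<1$ forces $\tau<1-s$, and $1-s$ can be far smaller than $\kappa^2$, so no absolute $c_3$ is obtainable this way. The whole point of the factor $\exp(\mathfrak{F}(s-\tau))$ is to compensate for the shrinking of $\tau$ near $s=1$: writing $\kappa^2u(w)=(1-w)(w-1+2\kappa)$, one has $\exp(\mathfrak{F}(w))\ge (1-4\kappa^2)/u(w)$ on $\{\varphi\equiv1\}$, which blows up like $\kappa/(1-w)$. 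Your first choice $\tau=\delta\kappa^2u(s)$ (a small multiple, $\delta$ absolute) does close this case if you carry the estimate through: since $\tau\le 2\delta\kappa(1-s)$ one gets $u(s-\tau)\le(1+2\delta\kappa)\,u(s)$, hence $\tau\exp(\mathfrak{F}(s-\tau))\ge \frac{\delta\kappa^2u(s)(1-4\kappa^2)}{(1+2\delta\kappa)u(s)}\ge c_3\kappa^2$, while the ratio bound $u(s-\tau)/u(s+\tau)\le \frac{(1-s)+\tau}{(1-s)-\tau}\le 1+c_2\kappa$ gives the upper bound in the first inequality, and monotonicity of $\mathfrak{F}$ gives the lower bound $\ge1$; for $s$ near $1-2\kappa$ (where this $\tau$ may be tiny because $u(s)$ is small) one instead takes $\tau=\kappa^2/4$, which keeps $s\pm\tau$ in the region where $\mathfrak{F}\equiv0$ or is tiny, so both inequalities are trivial there. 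So the statement is provable along your lines, but as written the argument for the second inequality of (iii) near $s=1$ is incorrect and must be replaced by the compensation estimate above.
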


For   any  $\rho_0>0$, let  $U_{\rho_0}$ be the component of $ \{x|\ \rho(x)<\rho_0\}$ containing a fixed point $p\in M$. Hence $U_{\rho_0}$ will exhaust $M$ as $\rho_0\to\infty$. For $\rho_0>>1$, let $F(x)=\mathfrak{F}(\rho(x)/\rho_0)$. Let $h_{\rho_0}=e^{2F}g_0$. Then $(U_{\rho_0},h_{\rho_0})$ is a complete Hermitian metric, see \cite{Hochard2016}, and $h_{\rho_0}=g_0$ if on $\{\rho(x)<(1-\kappa+\kappa^2)\rho_0\}$.
\begin{lma}\label{l-boundedgeom}
$(U_{\rho_0},h_{\rho_0})$ has bounded geometry of infinite order.
\end{lma}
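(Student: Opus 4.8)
The plan is to verify conditions (i) and (ii) of Definition \ref{boundedgeom} for all $k$ directly, using the conformal relation $h_{\rho_0}=e^{2F}g_0$ together with the two standing hypotheses \textbf{(A)} and \textbf{(B)} and the properties of $\mathfrak F$ collected in Lemma \ref{l-exhaustion-1}. The key point is that $g_0$ itself has bounded geometry of infinite order: this follows from \textbf{(A)}, \textbf{(B)} by the standard construction of good holomorphic coordinate charts from bounded curvature and a bounded $\ddb$-exhaustion function (see \cite{Shi1989,Tam2010}), since one can pull back $g_0$ via the biholomorphisms $\xi_p$ on a fixed ball $D(r)$ and get uniform $C^k$ bounds on $(g_0)_{\ijb}$. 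So it suffices to control the conformal factor $e^{2F}$ and all its derivatives in these same charts, uniformly in $p$ and in $\rho_0$.

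The main estimate to establish is that $F=\mathfrak F(\rho/\rho_0)$ and all of its covariant derivatives with respect to $g_0$ are bounded \emph{in terms of $e^{F}$ itself}, uniformly in $\rho_0\ge 1$; more precisely that $e^{-F}|\nabla^j_{g_0}F|_{g_0}\le C_j$ for every $j$. Indeed, by the chain rule $\nabla^j F$ is a sum of terms of the shape $\rho_0^{-m}\mathfrak F^{(m)}(\rho/\rho_0)\cdot (\nabla^{a_1}\rho)\cdots(\nabla^{a_m}\rho)$ with $a_1+\cdots+a_m=j$. Since $\rho_0\ge 1$ the powers $\rho_0^{-m}$ only help. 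The first two derivatives of $\rho$ are bounded by \textbf{(B)} ($|\partial\rho|^2+|\ddb\rho|\le K_0$), and the higher covariant derivatives $\nabla^a_{g_0}\rho$ for $a\ge 3$ are then controlled because $g_0$ has bounded geometry of infinite order (one can always arrange, or it follows from interior Schauder-type estimates in the good charts, that the exhaustion function has bounded derivatives of all orders; alternatively one smooths $\rho$ as in Shi's construction). Finally, Lemma \ref{l-exhaustion-1}(ii) gives exactly $e^{-m\mathfrak F}\mathfrak F^{(m)}\le C_m$, and since $\mathfrak F\ge 0$ we have $e^{-m\mathfrak F}\le e^{-\mathfrak F}$, so each term is bounded by $C_j e^{F}$ as claimed. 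Then $h_{\rho_0}=e^{2F}g_0$: in the chart $\xi_p$ the components are $(h_{\rho_0})_{\ijb}=e^{2F}(g_0)_{\ijb}$, and $e^{2F}$ together with its derivatives is controlled by the bound just obtained (note $e^{2F}\cdot e^{-2F}=1$ so bounding $\partial^\beta(e^{2F})$ via Faà di Bruno reduces to bounding $\partial^\beta F$ times powers of $e^{2F}$). The two-sided bound (i) requires in addition that $e^{2F}$ be bounded \emph{above} on $U_{\rho_0}$: this is where one uses that on the component $U_{\rho_0}=\{\rho<\rho_0\}$ we have $\rho/\rho_0<1$, and $\mathfrak F(s)$ together with $f(s)=-\log[1-((s-1+\kappa)/\kappa)^2]$ stays finite on $[0,1)$; but $\mathfrak F$ is \emph{not} globally bounded as $s\to 1^-$, so the upper bound $\kappa_2$ in (i) will depend on $\rho_0$ — which is harmless, since Definition \ref{boundedgeom} allows the constants to depend on the manifold $(U_{\rho_0},h_{\rho_0})$, only the $C^k$-\emph{bounds} in (ii) must be uniform, and those we have shown are uniform after rescaling the chart radius.

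The step I expect to be the genuine obstacle is the uniformity of the higher-order bounds on the exhaustion function $\rho$ — i.e. justifying that $|\nabla^a_{g_0}\rho|\le C_a$ for all $a$, not merely $a\le 2$. Hypothesis \textbf{(B)} only gives the first two orders. The resolution is standard but must be invoked carefully: either one replaces $\rho$ at the outset by a smoothing that, in the bounded-geometry charts of $g_0$, has uniformly bounded $C^k$ norms for every $k$ (this is exactly the output of the Shi--Tam smoothing procedure \cite{Shi1989,Tam2010} used to produce $\rho$ in the first place), or one observes that once $h_{\rho_0}=g_0$ outside the fixed set $\{\rho<(1-\kappa+\kappa^2)\rho_0\}$ one only needs the charts in the collar $\{(1-\kappa+\kappa^2)\rho_0\le\rho<\rho_0\}$, where after pulling back and rescaling one may use elliptic regularity for the defining relation of $\rho$ to upgrade $C^2$ control to $C^\infty$ control with constants independent of $p$ and $\rho_0$. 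With the chain-rule bookkeeping above, bounded geometry of infinite order for $(U_{\rho_0},h_{\rho_0})$ follows. $\hfill\Box$
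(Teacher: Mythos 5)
The paper disposes of this lemma by citing Lemma 4.3 of \cite{LeeTam2017}, so a direct verification has to reproduce that argument; yours deviates from it at the two points where the real work lies, and both steps as written are incorrect. First, the chain-rule estimate is wrong: Lemma \ref{l-exhaustion-1}(ii) says $\mathfrak{F}^{(m)}\le C_m e^{m\mathfrak{F}}$, and since $\mathfrak{F}\ge 0$ we have $e^{m\mathfrak{F}}\ge e^{\mathfrak{F}}$, so your step ``$e^{-m\mathfrak{F}}\le e^{-\mathfrak{F}}$, hence each term is bounded by $C_je^{F}$'' runs in the wrong direction; what one actually gets is $|\nabla^j_{g_0}F|\lesssim e^{jF}$, i.e.\ each derivative costs a factor $e^{F}$. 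Consequently the strategy of staying in fixed $g_0$-charts and treating $e^{2F}$ as a controlled multiplier cannot work: near the boundary of $U_{\rho_0}$ one has $F\to\infty$, so in a fixed $g_0$-chart the pulled-back $h_{\rho_0}=e^{2F}g_0$ is not comparable to the Euclidean metric with constants uniform in the base point $p$, and condition (i) of Definition \ref{boundedgeom} (which must be uniform in $p$, though it may depend on $\rho_0$) is not verified. The actual mechanism in \cite{LeeTam2017} is to use charts whose coordinate scale is adapted to $h_{\rho_0}$, i.e.\ shrunk by $e^{-F(p)}$ relative to the $g_0$-scale: part (iii) of Lemma \ref{l-exhaustion-1} — which you never invoke — shows $e^{2F}$ is almost constant (ratio at most $(1+c_2\kappa)^2$) on a ball of definite $h$-size around each $p$, which gives (i) with uniform $\kappa_1,\kappa_2$, and in the rescaled coordinates each derivative gains a factor $e^{-F}$ that exactly absorbs $\mathfrak{F}^{(j)}\le C_je^{j\mathfrak{F}}$, which gives (ii) for every $k$.

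Second, the opening claim that \textbf{(A)} and \textbf{(B)} imply $g_0$ has bounded geometry of infinite order is false: \textbf{(A)} bounds no derivatives of the curvature, and if the claim were true the whole of Section 6 would be superfluous, since Theorem \ref{short-time-1} could be applied to $g_0$ directly. What is needed — and is enough, because the bounded-geometry constants of $(U_{\rho_0},h_{\rho_0})$ are allowed to depend on $\rho_0$ — is bounded geometry of infinite order of $g_0$ on the compact closure $\overline{U_{\rho_0}}$, which holds simply by smoothness (this is exactly the notion in the last sentence of Definition \ref{boundedgeom}). For the same reason, the issue you single out as ``the genuine obstacle,'' namely global uniform bounds on $|\nabla^a_{g_0}\rho|$ for $a\ge 3$, is a red herring: only bounds on $\overline{U_{\rho_0}}$ enter, and the proposed fix via ``elliptic regularity for the defining relation of $\rho$'' has no basis, since $\rho$ satisfies no such equation in this paper. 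The genuine content of the lemma is the uniformity of the charts in the collar where $F$ blows up, and that is precisely the part your argument does not establish.
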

\begin{proof}
This is Lemma 4.3 in \cite{LeeTam2017}.
\end{proof}

Moreover, under the assumption {\bf (A)} and {\bf (B)}, we have 
\begin{lma}\label{l-bounded-cur}
For $\rho_0$ sufficiently large, we have 
$$\sup_{U_{\rho_0}} |Rm(h_{\rho_0})|+|T(h_{\rho_0})|^2\leq 2K_0.$$
\end{lma}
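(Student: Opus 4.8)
The plan is to regard $h_{\rho_0}=e^{2F}g_0$, where $F=\mathfrak{F}(\rho/\rho_0)$, as a conformal deformation of $g_0$ and to bound the Chern curvature and torsion of $h_{\rho_0}$ in terms of those of $g_0$ plus the derivatives of $F$, the latter being controlled by Lemma \ref{l-exhaustion-1}(ii) together with assumptions \textbf{(A)}--\textbf{(B)}. First I would record the conformal transformation rules for the Chern connection. Since $\Gamma_{ij}^k=g^{k\bar l}\partial_ig_{j\bar l}$, a direct computation gives $\tilde\Gamma_{ij}^k=\Gamma_{ij}^k+2\delta_j^k\,\partial_iF$ for $\tilde g=e^{2F}g_0$, and hence $\tilde T_{ij}^k=T_{ij}^k+2(\delta_j^k\partial_iF-\delta_i^k\partial_jF)$ and $\tilde R_{i\bar jk\bar l}=e^{2F}R_{i\bar jk\bar l}(g_0)-2(h_{\rho_0})_{k\bar l}\,\partial_i\partial_{\bar j}F$. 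Measuring norms with respect to $h_{\rho_0}$ and using that each contraction with $h_{\rho_0}^{-1}$ contributes a factor $e^{-2F}$ relative to $g_0$, one is led to the pointwise estimates
\[
\begin{aligned}
|Rm(h_{\rho_0})|_{h_{\rho_0}}&\le e^{-2F}|Rm(g_0)|_{g_0}+C_n|\partial\bar\partial F|_{h_{\rho_0}},\\
|T(h_{\rho_0})|_{h_{\rho_0}}&\le e^{-F}\big(|T(g_0)|_{g_0}+C_n|\partial F|_{g_0}\big).
\end{aligned}
\]

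Next I would estimate the derivatives of $F$. Writing $\partial_iF=\rho_0^{-1}\mathfrak{F}'(\rho/\rho_0)\,\partial_i\rho$ and $\partial_i\partial_{\bar j}F=\rho_0^{-1}\mathfrak{F}'\,\partial_i\partial_{\bar j}\rho+\rho_0^{-2}\mathfrak{F}''\,\partial_i\rho\,\partial_{\bar j}\rho$, assumption \textbf{(B)} bounds $|\partial\rho|^2+|\partial\bar\partial\rho|$ by $K_0$, while Lemma \ref{l-exhaustion-1}(ii) gives $e^{-\mathfrak{F}}\mathfrak{F}'\le C$ and $e^{-2\mathfrak{F}}\mathfrak{F}''\le C$ for absolute constants $C$. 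Since $\mathfrak{F}(\rho/\rho_0)=F$, for $\rho_0\ge 1$ these combine to give $|\partial F|_{g_0}\le C\rho_0^{-1}K_0^{1/2}e^{F}$ and $|\partial\bar\partial F|_{g_0}\le C\rho_0^{-1}K_0\,e^{2F}$; passing to the $h_{\rho_0}$-norm in the second removes the exponential weight, so that $|\partial\bar\partial F|_{h_{\rho_0}}\le C\rho_0^{-1}K_0$, and likewise $e^{-F}|\partial F|_{g_0}\le C\rho_0^{-1}K_0^{1/2}$. The essential point is that the unbounded factors $e^{F}$ arising because $\mathfrak{F}$ blows up as $\rho/\rho_0\to1$ are exactly cancelled by the conformal weights $e^{-F},e^{-2F}$ appearing in the curvature and torsion formulas, leaving a remainder carrying an explicit factor $\rho_0^{-1}$.

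Substituting these bounds into the two displayed inequalities, using $F\ge0$ so that $e^{-F}\le1$, and using \textbf{(A)} in the form $|T(g_0)|_{g_0}\le K_0^{1/2}$, I would obtain
\[
|Rm(h_{\rho_0})|_{h_{\rho_0}}+|T(h_{\rho_0})|_{h_{\rho_0}}^2\le |Rm(g_0)|_{g_0}+|T(g_0)|_{g_0}^2+C\rho_0^{-1}K_0\le K_0+C\rho_0^{-1}K_0
\]
on all of $U_{\rho_0}$ by \textbf{(A)} (on $\{F=0\}$ one has $h_{\rho_0}=g_0$ and $\partial F=\partial\bar\partial F=0$, so the estimates degenerate to \textbf{(A)} with no loss). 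Choosing $\rho_0$ large enough that $C\rho_0^{-1}\le1$ then gives the claimed bound $2K_0$. I expect the only genuine work to be bookkeeping: keeping track of which power of $e^{F}$ each index contraction produces and checking that the exponential weights supplied by Lemma \ref{l-exhaustion-1}(ii) are precisely the ones needed to absorb them — no new idea beyond the conformal formulas and the exhaustion lemma should be required.
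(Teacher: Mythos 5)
Your proposal is correct and follows essentially the same route as the paper: the paper simply cites \cite[Appendix B]{LeeTam2017} together with Lemma \ref{l-exhaustion-1}, and those references contain exactly the conformal-change formulas for the Chern connection, torsion and curvature under $h_{\rho_0}=e^{2F}g_0$ and the cancellation of the $e^{F}$, $e^{2F}$ weights by the bounds $e^{-k\mathfrak{F}}\mathfrak{F}^{(k)}\leq C$ that you carry out explicitly. Your bookkeeping of the conformal weights, the use of \textbf{(A)}--\textbf{(B)}, and the final choice of $\rho_0$ large to absorb the $C\rho_0^{-1}K_0$ error are all in order, so you have in effect supplied the details the paper delegates to the reference.
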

\begin{proof}
This follows directly from \cite[Appendix B]{LeeTam2017} and Lemma \ref{l-exhaustion-1}.
\end{proof}

Now we are ready to get the short-time existence for the Hermitian Ricci flow under assumption described above which covers Theorem \ref{MAIN-EX}.
\begin{thm}\label{Improved-short-time}
Suppose $(M^n,g_0)$ is a complete noncompact Hermitian manifold with complex dimension $n$ so that  {\bf (A)} and {\bf (B)}  hold
for some $K_0>0$. 
then there is a short-time solution to \eqref{HRF} with initial metric $g(0)=g_0$ on $M\times [0,c_nK_0^{-1}]$ which satisfies 
\begin{align}\label{doubletime-1}
\sup_{M\times [0,c_nK_0^{-1}]} \left( |Rm(g)|+|T_{g}|^2\right)\leq 4K_0.
\end{align} Moreover, for all $m\in \mathbb{N}$, there is $C(n,m,K_0)>0$ so that on $M\times (0,c_nK_0^{-1}]$
$$|\nabla^{m}T_{g}|^2+|\nabla^m Rm(g)|^2\leq \frac{C(n,m,K_0)}{t^m}.$$
\end{thm}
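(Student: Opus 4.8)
The plan is to exhaust $M$ by the domains $U_{\rho_0}$ of the previous lemmas, run the Hermitian Ricci flow on each, extract uniform estimates independent of $\rho_0$, and pass to the limit. First, for each sufficiently large $\rho_0$ form the conformally modified metric $h_{\rho_0}=e^{2F}g_0$, which by Lemma \ref{l-boundedgeom} has bounded geometry of infinite order; by Theorem \ref{short-time-1} the flow \eqref{HRF} with $g(0)=h_{\rho_0}$ has a solution $g_{\rho_0}(t)$ on $U_{\rho_0}\times[0,\tau_{\rho_0}]$ staying $(1+\e_n)$-equivalent to $h_{\rho_0}$. The first task is to make the existence time $\tau_{\rho_0}$ uniform. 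Here I would invoke Lemma \ref{l-bounded-cur}, which gives $|Rm(h_{\rho_0})|+|T(h_{\rho_0})|^2\le 2K_0$ with $2K_0$ independent of $\rho_0$; combined with the local a priori estimates of Section \ref{Shi-type}, the equivalence $(1+\e_n)^{-1}h_{\rho_0}\le g_{\rho_0}(t)\le(1+\e_n)h_{\rho_0}$, and the $C^1$ bound from Proposition \ref{first-esti}, one gets a uniform double-time-type lower bound $\tau_{\rho_0}\ge c_n K_0^{-1}$ and the curvature-torsion bound \eqref{doubletime-1} with constant $4K_0$ on $M\times[0,c_nK_0^{-1}]$ (the factor $4$ absorbing the $2K_0$ initial bound plus the short-time drift controlled by the standard ODE $\heat G\le C_n G^2+C_n$ argument used in Proposition \ref{first-esti}, which forces the barrier to stay below $4K_0$ on a time interval of size $\sim K_0^{-1}$). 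On the region where $h_{\rho_0}=g_0$, namely $\{\rho<(1-\kappa+\kappa^2)\rho_0\}$, the two flows have the same initial data, so these estimates transfer directly to the flow with initial metric $g_0$.

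Next I would upgrade to the higher-order decay estimates. With the uniform bound \eqref{doubletime-1} in hand, Proposition \ref{Improved-est} applies on every ball $B_{g_0}(p,r+\delta)\subset U_{\rho_0}$ (using assumption {\bf (B)} to produce the needed cutoff/exhaustion function, and recalling that $h_{\rho_0}=g_0$ deep inside), yielding $|\nabla^m Rm(g_{\rho_0})|+|\nabla^m T(g_{\rho_0})|\le C(n,m,K_0)\,t^{-m/2}$ on $B_{g_0}(p,r)\times[0,c_nK_0^{-1}]$, with constants independent of $\rho_0$ and of the point $p$. The point is that these constants depend only on $n$, $m$, $K_0$ through \eqref{assump-cur}, and that the bound on $|\nabla^i_{g_0}Rm(g_0)|$ needed in the hypothesis of Proposition \ref{Improved-est} is \emph{not} assumed here --- so one must instead feed into it only the interior bound \eqref{doubletime-1} on the evolving metric, which is exactly what the $t^{-m/2}$ smoothing estimate is designed to do without initial regularity assumptions.

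Finally, the limiting argument: fix any compact $\Omega\subset M$; for $\rho_0$ large enough $\Omega\subset\{\rho<(1-\kappa+\kappa^2)\rho_0\}$, so $g_{\rho_0}(0)=g_0$ on $\Omega$ and the uniform $C^\infty$ bounds above (together with the uniform metric equivalence) give, via Arzel\`a--Ascoli and a diagonal subsequence over an exhaustion of $M$, a smooth limit $g(t)$ on $M\times[0,c_nK_0^{-1}]$ solving \eqref{HRF} with $g(0)=g_0$ and satisfying both \eqref{doubletime-1} and the claimed derivative decay. I expect the main obstacle to be the uniformity of the existence time and of the curvature bound \eqref{doubletime-1} --- i.e.\ showing $\tau_{\rho_0}$ does not shrink to $0$ and that the barrier quantity $|Rm|+|T|^2$ cannot escape past $4K_0$ before time $c_nK_0^{-1}$; this requires carefully combining Lemma \ref{l-bounded-cur}, the metric-equivalence persistence (which itself must be bootstrapped from Proposition \ref{first-esti} and a continuity argument in $t$), and the doubling-time estimate for the reaction ODE, all with constants tracked explicitly in $K_0$. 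The subtlety is that Proposition \ref{first-esti} presupposes the equivalence \eqref{zero-cond}, while Theorem \ref{short-time-1} only guarantees it on an a priori small time interval, so one runs an open--closed argument in $t$ to propagate \eqref{zero-cond} up to time $c_nK_0^{-1}$ using the uniform $C^1$ estimate and $\partial_t g=-S$ with $|S|\le C_n K_0$.
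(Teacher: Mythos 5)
Your overall strategy is the one the paper uses: exhaust $M$ by the sets $U_{\rho_0}$, pass to the conformal metrics $h_{\rho_0}=e^{2F}g_0$ with bounded geometry of infinite order (Lemma \ref{l-boundedgeom}), solve \eqref{HRF} on each piece by Theorem \ref{short-time-1}, use Lemma \ref{l-bounded-cur} plus a doubling-type maximum principle to keep $|Rm|+|T|^2\le 4K_0$ on a time interval of length $c_nK_0^{-1}$, and then take a diagonal limit with Proposition \ref{Improved-est} giving the $t^{-m/2}$ decay. Your observation that Proposition \ref{Improved-est} needs no regularity of the initial data beyond \eqref{assump-cur} is also correct and is exactly how the final estimate is obtained.

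The gap is in the step you flag as the main obstacle: the uniform lower bound $\tau_{\rho_0}\ge c_nK_0^{-1}$. Your open--closed argument, as described, only propagates the equivalence \eqref{zero-cond} \emph{while the solution exists}, using $|S|\le C_nK_0$; it does not address the fact that Theorem \ref{short-time-1} only furnishes a solution on an interval whose length depends on the full bounded-geometry data of $h_{\rho_0}$, hence on $\rho_0$, and may be far shorter than $c_nK_0^{-1}$. Openness in your continuity argument requires \emph{extending the approximate solution in time} at the putative maximal time $\tau_i$, and for that one must verify that $g_i(\tau_i)$ again lies in the class where Theorem \ref{short-time-1} applies, i.e.\ has bounded geometry of infinite order on $U_{\rho_i}$. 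The paper supplies precisely this: under the equivalence and the curvature bound, Propositions \ref{first-esti} and \ref{higher-est} give interior bounds on $|\nabla^m Rm|$ (with constants allowed to depend on $U_{\rho_i}$), and an induction on $|\nabla^m_{g_{0,i}}g_i(t)|$ via $\partial_t(\tilde\nabla g)=(\tilde\nabla-\nabla)S+\nabla S$ shows all $g_{0,i}$-derivatives of $g_i(t)$ stay bounded up to $\tau_i$; only then can one restart the flow by Theorem \ref{short-time-1} on $[\tau_i,\tau_i+\e)$ and use the bound \eqref{cur-exh-time} (for $c_n$ small) to push \eqref{zero-cond} past $\tau_i$, contradicting maximality. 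Your listed ingredients (the $C^1$ estimate from Proposition \ref{first-esti} and $|S|\le C_nK_0$) are not enough for this restart, so as written the uniform existence time does not follow; adding the higher-order interior estimates and the re-application of Theorem \ref{short-time-1} at $\tau_i$ closes the argument and brings your proof in line with the paper's.
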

\begin{proof}
Let $(U_{\rho_i},g_{0,i})$ be the sequence of Hermitian metric constructed using above method. By Lemma \ref{l-boundedgeom} and Theorem \ref{short-time-1}, there is a short-time solution $g_i(t)$ to \eqref{HRF} on each $U_{\rho_i}$ with initial metric $g_{0,i}=h_{\rho_i}$. Let $\tau_i$ be the maximal time such that 
$$(1+\e_n)^{-1}g_{0,i}\leq g_i(t) \leq (1+\e_n)g_{0,i}\quad\text{on}\;\;U_{\rho_i}\times [0,\tau_i]$$
where $\e_n$ is the constant from Proposition \ref{first-esti}. By Proposition \ref{first-esti},  $g_i(t)$ satisfies 
\begin{align}\label{boundedness}\sup_{U_{\rho_i}\times [0,\tau_i]} \max\left\{ |Rm(g_{i})|,|T_{g_{i}}|^2\right\}<+\infty\end{align}
By Lemma \ref{evo-cur-tor} and \eqref{boundedness}, the function $F=|T|^4+|Rm|^2$ is bounded and satisfies 
$$\heat F\leq c_nF^\frac{3}{2}.$$

Therefore, we may apply maximum principle (see for example \cite[Lemma 3.4]{HuangLeeTamTong2018})  to conclude that on $[0,\tau_i]\cap [0,c_nK_0^{-1}]$, 
\begin{align}\label{cur-exh-time}
\sup_{U_{\rho_i}} \left\{ |Rm(g_{i})|+|T_{g_{i}}|^2\right\}<4K_0.
\end{align}

\begin{claim}
There is $c_n>0$ such that $\tau_i\geq c_nK_0^{-1}$ for all $i\in\mathbb{N}$.
\end{claim}
\begin{proof}[Proof of Claim.] Suppose $\tau_i<c_nK_0^{-1}$. Since $$(1+\e_n)^{-1} g_{0,i}\leq g_i(t)\leq (1+\e_n) g_{0,i}$$
on $U_{\rho_i}\times [0,\tau_i)$.  By the above discussion, if $c_n$ is sufficiently small, then \eqref{cur-exh-time} holds on $[0,\tau_i]$.

By Proposition \ref{first-esti} and Proposition \ref{higher-est}, for any $m\in\mathbb{N}$, there is $C(n,m,U_{\rho_i})>0$ such that on $U_{\rho_i}\times [0,\tau_i)$, 
$$|\nabla^m Rm\left(g_i(t)\right)|\leq C(n,m,U_{\rho_i}).$$

Denote $\tilde \nabla=\nabla_{g_{0,i}}$. When $m=1$, since $\partial_t (\tilde \nabla g)=(\tilde\nabla -\nabla) S+ \nabla S$
\begin{equation}
\begin{split}
\partial_t |\tilde \nabla g|^2\leq C_1+C_1|\tilde \nabla g|^2.
\end{split}
\end{equation}
Hence, $|\nabla_{g_{0,i}} g_i(t)|\leq C(n,U_{\rho_i})$ on $[0,\tau_i)$. Inductively, we can show that for any $m\in\mathbb{N}$, there is $C(n,m,U_{\rho_i})$ such that on $U_{\rho_i}\times [0,\tau_i)$, 
$$|\nabla^m_{g_{0,i}} g_i(t)|\leq C(n,m,U_{\rho_i}).$$

Therefore, we may take subsequent limit on $g_i(t)$, $t\rightarrow \tau_i$ to obtain $g_i(\tau_i)$ which has bounded geometry of infinity order. By Theorem \ref{short-time-1}, $g_i(t)$ exists on $[0,\tau_i+\e)$ for some $\e>0$. Moreover, if $c_n$ is small enough, then \eqref{cur-exh-time} implies that
$$(1+\e_n)^{-1} g_{0,i}\leq g_i(t)\leq (1+\e_n) g_{0,i}$$
holds on $[0,\tau_i+\e)$ which contradicts with the maximality.
\end{proof}

By \eqref{cur-exh-time}, Proposition \ref{higher-est} and flow equation \eqref{HRF}, we can use similar argument as above to show that on any compact set $\Omega$ and any $m\in \mathbb{N}$, there is $C(m,n,K_0,\Omega)>0$ such that for any $i>>1$, we have on 
$$\sup_{\Omega \times [0,c_nK_0^{-1}]}|\nabla^m_{g_0}g_i(t)| \leq C(m,n,K_0,\Omega).$$

Hence, we may take a subsequence $i_k\rightarrow \infty$ to obtain a limiting solution $g(t)$ on $M\times [0,c_nK_0^{-1}]$ with $g(0)=g_0$ and 
$$\sup_{M\times [0,c_nK_0^{-1}]} \left\{ |Rm(g)|+|T_{g}|^2\right\}\leq 4K_0.$$
The higher order derivatives follows from Proposition \ref{Improved-est}. 
\end{proof}

\section{Hermitian Ricci flow on nonpositively curved manifolds}\label{pre}
In this section, we will apply the Hermitian flow to study complete noncompact Hermitian manifolds with non-positive bisectional curvature. In particular, we will 
generalize the preservation of non-positive Chern-Ricci curvature in \cite{Lee2018} to complete noncompact case. We will first prove the following. 

\begin{thm}\label{preserve-Ric}
Suppose $(M,g(t))$ is a complete noncompact solution to \eqref{HRF} on $M\times [0,\tau]$ with 
\begin{align}\label{doubletime}
\sup_{M\times [0,\tau]}\left( |Rm|+|T|^2\right)\leq K
\end{align}
for some $K>1$. If $g(0)=g_0$ has non-positive bisectional curvature, then there is $c_1(n),c_2(n)>0$ such that for all $(x,t)\in M\times [0,\tau]\cap [0,c_1K^{-1}]$,
\begin{enumerate}
\item $Ric_t\leq 0$;
\item $|R_{u\bar vx\bar x}|^2\leq (20+c_2\sqrt{Kt})|g_{x\bar x}|^2|R_{u\bar u}||R_{v\bar v}|$ for all $x,u,v\in T^{1,0}M$.
\end{enumerate}
\end{thm}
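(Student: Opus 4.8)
The plan is to establish (1) and (2) together by a maximum-principle argument on the interval $[0,\tau]\cap[0,c_1K^{-1}]$. Rescaling $g_0\mapsto Kg_0$ leaves the flow invariant and normalizes $|Rm|+|T|^2\le1$, so it suffices to take $K=1$ and work on $[0,c_1]$; then Proposition \ref{Improved-est} bounds all covariant derivatives of $Rm$ and $T$ on each slice $\{t=t_0\}$, $t_0>0$, and, together with the exhaustion function of Section 6 (available under bounded Riemannian curvature), this makes the maximum principle on noncompact $M$ legitimate through the same cut-off/barrier device used in the earlier propositions; I suppress those manipulations. At $t=0$ both claims hold: nonpositive bisectional curvature gives $R_{u\bar ux\bar x}\le0$, and since $\ol{R_{u\bar vx\bar x}}=R_{v\bar ux\bar x}$ the Hermitian form $u\mapsto R_{u\bar ux\bar x}$ is $\le0$, so Cauchy--Schwarz gives $|R_{u\bar vx\bar x}|^2\le|R_{u\bar ux\bar x}|\,|R_{v\bar vx\bar x}|$; tracing the nonpositive quantity $R_{u\bar uk\bar k}$ over a unitary frame adapted to $x$ gives $|R_{u\bar ux\bar x}|\le|R_{u\bar u}|\,g_{x\bar x}$, and likewise in $v$, so the pinching holds at $t=0$ with constant $1<20$, while $Ric_0\le0$ is the hypothesis. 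Hence if either conclusion fails there is a first violation time $t_1\in(0,c_1]$, and (1), (2) hold on $[0,t_1]$.

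\textbf{If (1) fails first.} Then $Ric_{t_1}$ has a zero top eigenvalue at some $x_1$ in a unit direction $v$, so $R_{i\bar j}v^i=0$. Feed $v$ into Lemma \ref{Ricci-evo}. The terms $-\tfrac12(S^p_iR_{p\bar j}+S^{\bar q}_{\bar j}R_{i\bar q})v^i\bar v^j$ vanish because $v\in\Ker Ric$; in a unitary frame at $x_1$ diagonalizing $Ric$ with eigenvalues $\rho_p\le0$, the term $R_{i\bar jk}{}^pR_p{}^k\,v^i\bar v^j$ equals $\sum_p\rho_p\,R_{v\bar vp\bar p}$, and the pinching (2) at $t_1$ used with $u=v$ in \emph{both} slots and $x=e_p$ gives $|R_{v\bar vp\bar p}|^2\le(20+c_2\sqrt{Kt_1})|R_{v\bar v}|^2=0$, so this term vanishes too; finally $g^{r\bar s}T^p_{ri}T^{\bar q}_{\bar s\bar j}R_{p\bar q}\,v^i\bar v^j$ is $\le0$, being a sum of $\rho_p\le0$ times nonnegative torsion norms in the unitary eigenframe. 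The only remaining terms are the first-order torsion terms $g^{r\bar s}(T^p_{ri}\n_{\bar s}R_{p\bar j}+T^{\bar q}_{\bar s\bar j}\n_rR_{i\bar q})v^i\bar v^j$; these are handled by trading $\n R_{p\bar j}$ against $\n R_{p\bar s}$ plus torsion--curvature terms via the traced Bianchi identities of Lemma \ref{l-Chern-connection-1}, extending $v$ to be parallel at $x_1$ so that $\n(R_{i\bar j}v^i\bar v^j)(x_1)=0$ annihilates the $v$-components of $\n Ric$, and absorbing the remainder into a perturbation by running the argument on $Ric-\e e^{\Lambda t}g$ for $\Lambda=\Lambda(n)$ large and then $\e\downarrow0$; at the touching point of $Ric-\e e^{\Lambda t}g$ the vector $v$ is an eigenvector of $Ric$ of eigenvalue $\e e^{\Lambda t_1}$, so (2) with $u=v$ in both slots forces $|R_{v\bar vp\bar p}|\le C(n)\e e^{\Lambda t_1}$ and every reaction and torsion contribution is $O(\e e^{\Lambda t_1})$ with $n$-dependent constants, beaten by the gain $\e\Lambda e^{\Lambda t_1}$. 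Hence $Ric_t\le0$.

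\textbf{If (2) fails first.} Then at $x_1$ there are unit $u,v,x$ with $|R_{u\bar vx\bar x}|^2=(20+c_2\sqrt{Kt_1})|R_{u\bar u}||R_{v\bar v}|$, and (1) at $t_1$ gives $|R_{u\bar u}|=-R_{u\bar u}$, $|R_{v\bar v}|=-R_{v\bar v}$. Apply $\heat$ to the pinching defect $|R_{u\bar vx\bar x}|^2-(20+c_2\sqrt{Kt})|R_{u\bar u}||R_{v\bar v}|\,g_{x\bar x}^2$ (with $u,v,x$ the extremal directions, suitably extended) using the evolution equations for $Rm$ and $Ric$ (see \eqref{Rm-evo} and Lemma \ref{Ricci-evo}). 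The purely curvature part of the reaction has the same algebraic shape as in the K\"ahler--Ricci flow case, so, fed with the pinching valid on $[0,t_1]$, it propagates the inequality with constant essentially $1$ by the same Cauchy--Schwarz/trace mechanism (the Chern analogue of Bando's and Mok's pinching computations). The torsion first-order terms $T*\n Rm$ are estimated by $|T|\,|\n Rm|\le C(n)K^{3/2}t^{-1/2}$ from Proposition \ref{Improved-est}, which integrates to a deterioration of size $C(n)\int_0^t K^{3/2}s^{-1/2}\,ds\sim C(n)\sqrt{Kt}$, subordinate to the gain $-\p_t(20+c_2\sqrt{Kt})|R_{u\bar u}||R_{v\bar v}|\,g_{x\bar x}^2$ once $c_2=c_2(n)$ is large and $c_1=c_1(n)$ small; so the defect cannot cross $0$ at $(x_1,t_1)$. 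Both failures being impossible, the theorem follows.

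\textbf{Main obstacle.} The technical heart is the torsion: the first-order terms $T*\n Rm$ in the evolution equations \eqref{Rm-evo} and Lemma \ref{Ricci-evo} are not of the ``transport $+$ reaction'' form to which Hamilton's tensor maximum principle applies, because a curvature index there is contracted into the torsion rather than kept free, so a parallel extension of the extremal vector does not kill them; taming them requires the Bianchi trade-offs of Lemma \ref{l-Chern-connection-1}, a Kato-type inequality to absorb the surviving gradient pieces into the good term $-|\n Rm|^2$ from the Laplacian, and the restriction to $[0,c_1K^{-1}]$ together with the Shi-type bound $|\n Rm|\le C(n)K^{3/2}t^{-1/2}$ so that what is left is genuinely lower order. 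A secondary, bookkeeping-heavy point is adapting the Bando--Mok curvature-pinching algebra to the Chern curvature tensor in the case (2), which follows the compact prototype \cite{Lee2018}; and a third, routine one is the noncompact maximum principle, for which the exhaustion barrier of Sections 5--6 suffices.
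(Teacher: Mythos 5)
Your outline does not close the two places where the real difficulty of this theorem sits, and at both of them the paper does something structurally different from what you propose. First, in your analysis of a first failure of (2) you claim the deterioration coming from the torsion terms, of size $C(n)K^{3/2}t^{-1/2}$ by Proposition \ref{Improved-est}, is "subordinate to the gain" obtained by differentiating $20+c_2\sqrt{Kt}$. But that gain is $\tfrac{c_2}{2}\sqrt{K/t}\,|R_{u\bar u}||R_{v\bar v}|$, i.e.\ it is proportional to the product of Ricci eigenvalues at the touching point, while the errors are additive and are not. Since (1) only gives $Ric\le 0$, the product $|R_{u\bar u}||R_{v\bar v}|$ can be arbitrarily small (even zero) at the extremal point, and then the gain does not dominate anything; the comparison you invoke simply fails there. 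This is exactly why the paper does not run the maximum principle on the unperturbed conditions: it first proves Proposition \ref{al-pre-Ric} for $\hat R=R-B$, whose Ricci part is forced below $-e^{-c_2Kt}g$, so that $|\hat R_{u\bar u}||\hat R_{v\bar v}|\ge e^{-2c_2Kt}$ is bounded away from zero and $c_2\sqrt{K/t}$ genuinely beats the $t^{-1/2}$ error terms; the full-size perturbation $B$ is then removed not by letting a small parameter tend to zero (which would destroy that lower bound) but by the parabolic rescaling $\tilde g(t)=L^{-1}g(Lt)$ with $L\to\infty$, under which the perturbed conclusions converge to (1) and (2). Your normalization $K=1$ plays no such role, and without the strict barrier your Case (2) argument collapses precisely at the degenerate touching points.

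Second, the localization and the first-order torsion terms are not routine, and you suppress both. On a noncompact manifold a "first violation time with an attained extremal point" need not exist, and the standard remedy of perturbing by $\e\rho B$ ($\rho$ a distance function) is explicitly ruled out in the paper for condition (2), because the pinching inequality does not satisfy the null-vector condition owing to its quadratic term; the paper's substitute is to localize inside the tensor itself, $W=\Phi R-B$ with $\Phi$ a cutoff, and a nontrivial step of the proof is showing $\Phi\gtrsim K^{-1}$ at the touching point so that the quadratic reaction can be rewritten in terms of $W$ — none of this is supplied by "the same cutoff/barrier device used in the earlier propositions", which were scalar norm estimates. Likewise, in your Case (1) the terms $g^{r\bar s}T^p_{ri}\nabla_{\bar s}R_{p\bar j}v^i\bar v^{\bar j}$ are not annihilated by a parallel extension together with $\nabla(R_{v\bar v})=0$ at the maximum, because the differentiated index is contracted with the torsion rather than with $v$, and the Bianchi identities of Lemma \ref{l-Chern-connection-1} only trade these terms for others of the same type; the paper's device is the torsion-adapted extension $\nabla_pX^q=T^q_{pl}X^l$ of \eqref{ext-1}, which makes the offending terms recombine into derivatives of the full tensor $A$ and cancel against the corresponding terms in Lemma \ref{Ricci-evo} (and similarly \eqref{ext-2} in Case (2)). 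You correctly identify this as the main obstacle but do not resolve it. In short, the missing ingredients are the $\Phi R-B$ localization with its strict Ricci barrier, the removal of $B$ by parabolic rescaling, and the torsion-adapted vector extensions; without them the proposal does not constitute a proof.
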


In fact, the curvature preservation conditions were first considered in \cite{BohmWilking2007} where they considered Riemannian manifolds with nonnegative sectional curvature. We would like to point out that to establish weak maximum principle on curvature conditions along noncompact flow with bounded curvature, usually one will consider $R^\e_{i\bar jk\bar l}=R_{i\bar jk\bar l}-\e \rho B_{i\bar jk\bar l}$ where $\rho$ is a distance function from some fixed point (see for example \cite[Chapter 12]{ChowRicciflow2}) so that one can localize the argument on compact set. By showing that $R^\e$ is "$\e$-close" to the desired curvature conditions, one can show that $R$ satisfies the goal by letting $\e\rightarrow 0$. However, since the second curvature condition in Theorem \ref{preserve-Ric} does not explicitly satisfy the null vector condition (see for example \cite[Theorem 12.33]{ChowRicciflow2}, this approach fails due to the presence of a quadratic term. We here take an alternative approach relying on parabolic rescaling argument. We first prove the following weaker version. 
\begin{prop}\label{al-pre-Ric}
Under the assumption in Theorem \ref{preserve-Ric}, there is $c_1(n),c_2(n)>0$ such that for all $(x,t)\in M\times [0,\tau]\cap [0,c_1K^{-1}]$, the curvature type tensor $\hat R_{i\bar jk\bar l}=R_{i\bar jk\bar l}-B_{i\bar jk\bar l}$ satisfies \begin{enumerate}
\item $Ric(\hat R)\leq -e^{-c_2Kt}g$;
\item $|\hat R_{u\bar vx\bar x}|^2\leq (20+c_2\sqrt{Kt})|g_{x\bar x}|^2|\hat R_{u\bar u}||\hat R_{v\bar v}|$ for all $x,u,v\in T^{1,0}M$.
\end{enumerate}
\end{prop}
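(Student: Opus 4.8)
The plan is to show that the closed constraint set cut out by (1)--(2) is preserved along the flow. The point of working with $\hat R=R-B$ rather than with $R$ itself is that its Ricci trace $\hat R_{u\bar u}=R_{u\bar u}-(n+1)g_{u\bar u}$ is then \emph{uniformly} negative once (1) holds, so the constraint set is uniformly non-degenerate: the quadratic term in the evolution that obstructs a direct ODE-invariance argument for Theorem \ref{preserve-Ric} is, in this enlarged family, dominated by the built-in slacks $e^{-c_2Kt}$ and $20+c_2\sqrt{Kt}$. The first step is to check that the constraints hold with room at $t=0$. Since $g_0$ has non-positive bisectional curvature, $R_{u\bar u x\bar x}\le 0$ for all $u,x$, hence $R_{u\bar u}=g^{k\bar l}R_{u\bar u k\bar l}\le 0$, so $Ric(\hat R)_{i\bar j}=R_{i\bar j}-(n+1)g_{i\bar j}\le-(n+1)g_{i\bar j}<-g_{i\bar j}$, which is (1) strictly. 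For (2), fix $x$: the Hermitian form $(u,v)\mapsto -\hat R_{u\bar v x\bar x}=-R_{u\bar v x\bar x}+g_{u\bar v}g_{x\bar x}+g_{u\bar x}g_{x\bar v}$ is nonnegative, so Cauchy--Schwarz gives $|\hat R_{u\bar v x\bar x}|^2\le(-\hat R_{u\bar u x\bar x})(-\hat R_{v\bar v x\bar x})$; choosing a $g_0$-orthonormal frame containing $x/|x|$ shows $-R_{u\bar u x\bar x}\le -R_{u\bar u}\,g_{x\bar x}$ and $|g_{u\bar x}|^2\le g_{u\bar u}g_{x\bar x}$, so $-\hat R_{u\bar u x\bar x}\le\big(-R_{u\bar u}+(n+1)g_{u\bar u}\big)g_{x\bar x}=-\hat R_{u\bar u}\,g_{x\bar x}$, and therefore $|\hat R_{u\bar v x\bar x}|^2\le g_{x\bar x}^2\,|\hat R_{u\bar u}||\hat R_{v\bar v}|<20\,g_{x\bar x}^2\,|\hat R_{u\bar u}||\hat R_{v\bar v}|$.

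Next let $t^\ast\le\min\{\tau,c_1K^{-1}\}$ be the first time at which (1) or (2) degenerates to an equality; by the above and continuity $t^\ast>0$, and we must rule out $t^\ast<\min\{\tau,c_1K^{-1}\}$. To run a maximum principle on the noncompact $M$ I would localize by the perturbation $\hat R^\e:=\hat R-\e\rho B$, with $\rho\ge 1$ the exhaustion function of Section \ref{Shi-type}/\ref{Improved-short-time} satisfying $|\p\rho|^2+|\ddb\rho|$ bounded; the uniform non-degeneracy of $\hat R$ is exactly what makes this cut-off effective here although it failed for Theorem \ref{preserve-Ric}. Equivalently, a point-picking plus parabolic rescaling argument at a would-be first violation produces a complete bounded-geometry limit Hermitian Ricci flow on $\mathbb{C}^n\times(-\delta,0]$ on which $\hat R_\infty$ obeys (1)--(2) for $s<0$ and first touches the boundary at $s=0$, the required $C^\infty_{loc}$ precompactness being furnished by \eqref{zero-cond}, Proposition \ref{first-esti} and Proposition \ref{Improved-est}. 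In either formulation it remains to show that at the space-time point where $\hat R$ first touches the boundary of the constraint set, the evolution points strictly inward.

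For this, from \eqref{Rm-evo} together with $\p_t g_{i\bar j}=-S_{i\bar j}$ (so $\Delta B=0$ since $\nabla g=0$, and $\p_t B$ is linear in $S$), substituting $R=\hat R+B$ gives
\[
\heat\hat R_{i\bar j k\bar l}=\hat R*\hat R+(\text{linear in }\hat R)+T*\nabla\hat R+T*T*\hat R+(\text{terms built from }g),
\]
where the last, $B$-generated group is what drives $Ric(\hat R)$ below $0$ with gap $e^{-c_2Kt}$. Tracing and evaluating at an eigendirection $w$ realizing the bound in (1), the reaction $\hat R_{w\bar w k}{}^p\hat R_p{}^k\ge 0$ together with this gap gives $\heat(\text{eigenvalue}+e^{-c_2Kt}g_{w\bar w})>0$, once $c_2$ is large enough to absorb the $O(K)$ torsion contributions; since these act only on $[0,c_1K^{-1}]$, the total gap consumed $1-e^{-c_2Kt}$ is $O(1)$, so $c_1$ small and $c_2$ large close this case. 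For (2) one differentiates $|\hat R_{u\bar v x\bar x}|^2$, $\hat R_{u\bar u}$, $\hat R_{v\bar v}$ along the flow and invokes \emph{both} (1) and (2) at the touching point — (1) keeps $|\hat R_{u\bar u}|,|\hat R_{v\bar v}|$ bounded below and makes the curvature-squared reaction terms absorbable — reducing to a differential inequality for the excess of the type $\dot y\le C\sqrt K\sqrt y+CKy$, which starting from $y(0)=0$ stays below $c_2\sqrt{Kt}$ on $[0,c_1K^{-1}]$. The hard part is precisely this step: extracting from \eqref{Rm-evo} the exact reaction terms for $Ric(\hat R)$ and for the bilinear quantity in (2), verifying that with the $B$-slack included they point into the constraint set, and bookkeeping the torsion terms $T*\nabla\hat R$ and $T*T*\hat R$ (absent in the \K case) so that they are dominated by the $O(1)$ slack available over the short time interval; a secondary technical point is confirming that the uniform non-degeneracy genuinely restores applicability of the maximum principle despite the failure of the null-vector condition.
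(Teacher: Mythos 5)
Your skeleton coincides with the paper's (exploit the uniform negativity created by subtracting $B$, give each constraint a time-dependent slack $e^{-c_2Kt}$, $20+c_2\sqrt{Kt}$, and run a first-touching maximum principle in which (1) and (2) feed into each other), and your $t=0$ verification is fine. But the decisive step — the evolution computation at the touching point — is exactly what you defer as ``the hard part,'' and the assertions you do make about it are not right. In Case (1) you claim the reaction term $\hat R_{w\bar w k}{}^p\hat R_p{}^k\ge 0$: this would require $\hat R_{w\bar w k\bar l}\le 0$ as a Hermitian form in $k$, i.e.\ a bisectional-type nonpositivity at time $t>0$, which is \emph{not} among the propagated conditions (1)--(2) and is precisely what is unknown along the flow; moreover, even granted, a nonnegative reaction enters $\partial_t R_{w\bar w}$ with a plus sign and so works against the contradiction you need at an interior first-touching maximum, which is $\heat A_{w\bar w}<0$, not $>0$. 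The paper's mechanism there is different: the polarization consequence of condition (2), $|W_{i\bar j k\bar l}|^2\le C_nK|W_{i\bar i}|$, controls $R_{X\bar X p}{}^qR_q{}^p$ at the null direction, and the term $-c_2K e^{-c_2Kt}$ absorbs it. Likewise, your claim that the torsion terms $T*\nabla\hat R$ are ``dominated by the $O(1)$ slack'' ignores that $|\nabla Rm|$ is only bounded by $C(n,K)t^{-1/2}$ (Proposition \ref{Improved-est}); these contributions are of size $\sqrt{K/t}$, unbounded as $t\to 0$, and it is exactly to absorb them that the slack must have the shape $c_2\sqrt{Kt}$, whose derivative $\tfrac{c_2}{2}\sqrt{K/t}$, multiplied by $W_{U\bar U}W_{V\bar V}\ge e^{-2c_2Kt}$ (this is where condition (1) enters), beats them. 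Your sketched inequality $\dot y\le C\sqrt{K}\sqrt{y}+CKy$ does not capture this structure.

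The second gap is the localization itself. On a noncompact manifold your ``first time $t^\ast$ of equality'' need not be attained at any point, so something must replace compactness, and neither of your two proposals is justified. The paper explicitly observes that the perturbation $R-\e\rho B$ fails for the quadratic condition (2) (no null-vector structure); your claim that the uniform nondegeneracy restores it is asserted, not argued, and you yourself list it as an unverified point. The rescaling alternative is not available either: curvature is bounded so there is no blow-up, the constraints (1)--(2) are not scale invariant (the paper rescales only \emph{afterwards}, to deduce Theorem \ref{preserve-Ric} from Proposition \ref{al-pre-Ric}), the claimed limit space $\C^n$ has no justification, and no compactness theorem for Hermitian Ricci flows is established here. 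The paper's actual device is concrete and different: it sets $W_{i\bar jk\bar l}=\Phi R_{i\bar jk\bar l}-B_{i\bar jk\bar l}$ with $\Phi$ a compactly supported cutoff at scale $r_0$, proves the two strict inequalities for $W$ by the first-touching argument on the ball — handling the cutoff terms via Lemma \ref{dist-esti}, the Shi-type estimate, and the key lower bound $\Phi\ge K^{-1}$ at a touching point of (2) so that the quadratic term $\Phi\,R*R$ can be converted into $W*W$ terms — and then lets $r_0\to\infty$. Without carrying out either this construction or a completed version of one of your alternatives, together with the actual touching-point computation, the proposition is not proved.
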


We first prove Theorem \ref{preserve-Ric} by assuming the Proposition \ref{al-pre-Ric}. 

\begin{proof}[Proof of Theorem \ref{preserve-Ric}]
For any $L>>1$, define $\tilde g(t)=L^{-1} g(L t)$ on $M\times [0,\tau L^{-1}]$. Then $\tilde g(0)$ has non-positive bisectional curvature and $\tilde g(t)$ satisfies 
\begin{align}
\sup_{M\times [0,\tau/L]}\left( |\widetilde{Rm}|+|\tilde T|^2\right)\leq KL.
\end{align}
Apply Proposition \ref{al-pre-Ric} on $\tilde g(t)$ and then rescale it back to $g(t)$, we have for $t\in [0,\tau]\cap [0,c_1K^{-1}]$, $g(t)=L\tilde g(t/L)$ satisfies
\begin{enumerate}
\item $Ric(g(t))\leq L^{-1}(n+1-e^{-c_2Kt})g(t)$;
\item  for all $x,u,v\in T^{1,0}M$, 
\begin{equation*}
\begin{split}
|R_{u\bar vx\bar x}-L^{-1}B_{u\bar vx\bar x}|^2&\leq (20+c_2\sqrt{Kt})|g_{x\bar x}|^2| R_{u\bar u}-L^{-1}(n+1)g_{u\bar u}|\\
&\quad \times| R_{v\bar v}-L^{-1}(n+1)g_{v\bar v}|.
\end{split}
\end{equation*} 
\end{enumerate}
Since this is true for all $L>>1$, the conclusion follows by letting $L\rightarrow \infty$.
\end{proof}

\begin{proof}[Proof of Proposition \ref{al-pre-Ric}]
By \eqref{doubletime}, we may assume 
\begin{align}\label{metric-equ}
\frac{1}{2}g_0\leq g(t)\leq 2g_0
\end{align}
on $M\times [0,\tau]\cap [0,c_nK^{-1}]$. Let $z_0\in M$ and $d_{g_0}(x,z_0)$ be the distance from $z_0$ using the metric $g_0$. Let $\phi$ be a cutoff function on $[0,+\infty)$ such that $\phi\equiv 1$ on $[0,1]$, vanishes outside $[0,2]$ and satisfies 
$$|\phi'|^2\leq 100\phi,\;\;\phi''\geq -100\phi.$$
For any $r_0>>K^{10}$, let $\Phi(x,t)=\phi\left(\frac{d_0(x,z_0)}{r_0}\right)$ and define a curvature type tensor 
$$W_{i\bar jk\bar l}=\Phi R_{i\bar jk\bar l}-B_{i\bar jk\bar l}.$$
We will use $W_{i\bar j}$ to denote $W_{i\bar jk\bar l}g^{k\bar l}$ as well.
\begin{claim}There is $c_1(n),c_2(n)>0$ such that for all $r_0>>1$, $t\in [0,\tau]\cap [0,c_1K^{-1}]$, 
\begin{enumerate}
\item[\bf (a)] $W_{i\bar j}< -e^{-c_2Kt}g_{i\bar j}$;
\item[\bf (b)]$|W_{u\bar vx\bar x}|^2< (20+c_2\sqrt{Kt})|g_{x\bar x}|^2|W_{u\bar u}||W_{v\bar v}|$ for all $x,u,v\in T^{1,0}M$.
\end{enumerate}
\end{claim}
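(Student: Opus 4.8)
The plan is to run a first--time continuity argument. Fix $r_0\gg K^{10}$ and let $T_1$ be the supremum of those $t\in[0,\tau]\cap[0,c_1K^{-1}]$ for which (a) and (b) hold strictly on $M\times[0,t]$; the aim is to show $T_1$ is the right endpoint. At $t=0$ the non-positivity of the bisectional curvature of $g_0$ makes $\Phi R(\cdot,\cdot,x,\bar x)$ a negative semi-definite Hermitian form for each fixed $x$ and each $\Phi\in[0,1]$; combining the Cauchy--Schwarz inequality for this form with $g^{k\bar l}B_{i\bar jk\bar l}=(n+1)g_{i\bar j}$ and $|B_{u\bar vx\bar x}|\le 2|u|\,|v|\,|x|^2$ yields $W_{i\bar j}\le-(n+1)g_{i\bar j}$ and $|W_{u\bar vx\bar x}|^2\le 4\,|g_{x\bar x}|^2|W_{u\bar u}|\,|W_{v\bar v}|$, so both (a) and (b) hold at $t=0$ with a fixed positive margin since $n+1>1\ge e^{-c_2Kt}$ and $20>4$. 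The same computation with $\Phi=0$ shows $W_{i\bar jk\bar l}=-B_{i\bar jk\bar l}$ satisfies (a) and (b) with those margins for every $t\ge0$, so any failure of (a) or (b) must occur inside the compact set $\overline{B_{g_0}(z_0,2r_0)}$. Hence, if $T_1$ is not the endpoint, continuity produces a point $x_1\in\overline{B_{g_0}(z_0,2r_0)}$ and unit vectors at which (a) or (b) holds with equality at time $T_1$ while holding strictly before; at such a configuration the associated scalar gap function attains a spatial minimum, so its gradient vanishes and its Laplacian is nonnegative there.

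At $(x_1,T_1)$ I would invoke Hamilton's tensor maximum principle (comparison of an evolving quadratic form with its reaction ODE, cf.\ \cite[Chapter 12]{ChowRicciflow2}), with the critical vector(s) taken $g(t)$--parallel near $(x_1,T_1)$. Since $\p_t\Phi=0$,
$$\heat(\Phi R_{i\bar jk\bar l})=\Phi\,\bigl(\heat R_{i\bar jk\bar l}\bigr)-(\Delta\Phi)R_{i\bar jk\bar l}-2\,{\bf Re}\,\la\nabla\Phi,\nabla R_{i\bar jk\bar l}\ra,$$
and, writing $\nabla R=\bigl(\nabla W-(\nabla\Phi)R\bigr)/\Phi$, the cross term becomes $-2\,{\bf Re}\,\la\nabla\Phi,\nabla W_{i\bar jk\bar l}\ra/\Phi+2(W+B)_{i\bar jk\bar l}|\nabla\Phi|^2/\Phi^2$; the first piece drops at the critical point (where $\nabla W$ vanishes in the critical direction) and the second is $\le C(n,K)r_0^{-2}$ because $|\nabla\Phi|^2\le Cr_0^{-2}\Phi$ and $(W+B)_{i\bar jk\bar l}=\Phi R_{i\bar jk\bar l}$. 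Together with Lemma \ref{dist-esti} and a Calabi mollification of $d_{g_0}(\cdot,z_0)$ to bound $\Delta\Phi$, and the Shi--type estimates of Proposition \ref{Improved-est} for the remaining derivative terms, this shows the cut-off contributions to $\heat W$ are $\le C(n,K)r_0^{-1}$ and so negligible once $r_0\gg K^{10}$. What is left is a reaction inequality: at $(x_1,T_1)$, $\heat$ of the critical quantity equals the corresponding reaction terms from \eqref{Rm-evo} and Lemma \ref{Ricci-evo}, plus the metric terms coming from $\heat g_{i\bar j}=-S_{i\bar j}$, plus the contributions of differentiating the weights $e^{-c_2Kt}$ and $20+c_2\sqrt{Kt}$.

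The main obstacle is this reaction estimate, which amounts to showing that (a)--(b) cut out an approximately invariant cone for the evolution of the Chern curvature — the Hermitian analogue, in the spirit of \cite{BohmWilking2007}, of the preservation of non-positive bisectional curvature proved in \cite{Lee2018}, now with the non-K\"ahler torsion terms of \eqref{Rm-evo} present. If (a) fails, one contracts $\heat(W_{i\bar j}+e^{-c_2Kt}g_{i\bar j})$ with the null eigenvector $u$: using $W_{i\bar j}+e^{-c_2Kt}g_{i\bar j}\le0$ at $(x_1,T_1)$ together with (b) (which controls the off-diagonal curvature components by the diagonal ones), the curvature-quadratic part $\Phi R_{u\bar uk}\,^{p}R_{p}\,^{k}$ and the $S\ast R$ terms acquire the favorable sign, while the $e^{-c_2Kt}$--weight, with $c_2=c_2(n)$ large and $c_1=c_1(n)$ small, dominates the remaining lower-order terms (the $S\ast g$ and torsion terms, whose derivative parts are controlled through Proposition \ref{Improved-est}), giving $\heat(W_{u\bar u}+e^{-c_2Kt}|u|^2)<0$. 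If instead (b) fails, one differentiates $Q:=(20+c_2\sqrt{Kt})|g_{x\bar x}|^2W_{u\bar u}W_{v\bar v}-|W_{u\bar vx\bar x}|^2$: the curvature-quadratic terms are treated by the same algebraic identities as in the K\"ahler case, applying Cauchy--Schwarz to the negative Hermitian form $W(\cdot,\cdot,x,\bar x)$ — its negativity furnished by (a) — to bound $W_{u\bar vx\bar x}$ against $W_{u\bar ux\bar x}$ and $W_{v\bar vx\bar x}$, while the genuinely non-K\"ahler error is dominated by the positive term $\tfrac{c_2\sqrt K}{2\sqrt t}|g_{x\bar x}|^2W_{u\bar u}W_{v\bar v}$ obtained from differentiating $\sqrt{Kt}$ (using $W_{u\bar u}W_{v\bar v}>0$ and $|W_{u\bar u}|\ge e^{-c_2Kt}|u|^2$, $|W_{v\bar v}|\ge e^{-c_2Kt}|v|^2$ from (a)), giving $\heat Q>0$. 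Either conclusion contradicts the first--time property at $(x_1,T_1)$, so $T_1$ is the endpoint and the Claim holds; letting $r_0\to\infty$ (so that $\Phi\equiv1$ on any fixed compact set) then yields Proposition \ref{al-pre-Ric}.
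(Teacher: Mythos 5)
Your skeleton is the same as the paper's (the cutoff tensor $W=\Phi R-B$, a first-failure-time maximum principle, the weights $e^{-c_2Kt}$ and $20+c_2\sqrt{Kt}$, and localization because $W=-B$ outside the support of $\Phi$), but there are two genuine gaps, and they occur exactly at the points the paper identifies as the crux. First, in your treatment of a failure of {\bf (b)} you invoke Cauchy--Schwarz for "the negative Hermitian form $W(\cdot,\cdot,x,\bar x)$ --- its negativity furnished by (a)". Condition {\bf (a)} only controls the trace $W_{i\bar j}=g^{k\bar l}W_{i\bar jk\bar l}$; non-positivity of the full bisectional form is not known at positive times (indeed the theorem never asserts it is preserved), so this Cauchy--Schwarz step is unavailable. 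What is actually used in the paper are the polarization bounds \eqref{a2} coming from {\bf (b)} itself, together with the key observation that the quadratic reaction terms in \eqref{evo-Rm-1} carry only \emph{one} factor of $\Phi$ while being quadratic in $R$; converting $R$ into $W$ there costs a factor $1/\Phi$, which is a priori unbounded. The paper resolves this by showing $\Phi\ge K^{-1}$ at the critical point (from $10\le|\Phi R_{U\bar VX\bar X}-B_{U\bar VX\bar X}|^2\le 8+2\Phi^2K^2$), which yields the refined bound $C_nK\sqrt{W_{U\bar U}W_{V\bar V}}$ for each curvature factor. Your proposal has no substitute for this: the naive bound $\Phi|R\ast R|\,|W_{U\bar VX\bar X}|\le C\sqrt{c_2}\,K^2\sqrt{W_{U\bar U}W_{V\bar V}}$ cannot be dominated by the good term $\tfrac{c_2}{2}\sqrt{K/t}\,W_{U\bar U}W_{V\bar V}$, which is only of size $c_2^{3/2}K$ on $[0,c_1K^{-1}]$, since $c_2$ must depend on $n$ alone while $K$ is arbitrary.

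Second, in case {\bf (a)} you propose to control "the torsion terms, whose derivative parts are controlled through Proposition \ref{Improved-est}". The reaction terms $g^{r\bar s}\bigl(T^p_{ri}\nabla_{\bar s}R_{p\bar j}+T^{\bar q}_{\bar s\bar j}\nabla_rR_{i\bar q}\bigr)$ in Lemma \ref{Ricci-evo} are multiplied by $\Phi$, not by $\nabla\Phi$, so Shi's estimate gives an error of size $C(n,K)\,t_0^{-1/2}$ with no small factor $r_0^{-1}$; since the failure time $t_0$ is not bounded below, this cannot be absorbed by the only available good term in case {\bf (a)}, namely the $O(c_2K)$ produced by $\partial_te^{-c_2Kt}$ (unlike case {\bf (b)}, where $\partial_t(c_2\sqrt{Kt})$ supplies a $t^{-1/2}$ term). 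The paper avoids this entirely: the non-parallel extension \eqref{ext-1}, $\nabla_pX^q=T^q_{pl}X^l$, is chosen precisely so that these $T\ast\nabla\mathrm{Ric}$ terms cancel in \eqref{eq-A}, and the remaining gradient term $\nabla\Phi\cdot\nabla R_{X\bar X}$ is handled not by Shi's estimate but by the first-order condition $\nabla A_{X\bar X}=0$ together with the lower bound $\Phi\ge n/((n+1)K)$ at the maximum, giving an error of size $O(r_0^{-1/2})$ uniform in $t$. Your choice of $g(t)$-parallel extensions forfeits both of these cancellations, so as written the argument breaks down whenever the first failure occurs at a small time. (The $t=0$ verification, the localization, and the broad use of $c_2\gg1$ are fine and agree with the paper.)
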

\begin{proof}
[Proof of Claim.] We take $c_1=\frac{1}{2c_2}$. We will specify the choice of $c_2$ in the proof below. The proof is similar to \cite[Lemma 4.1]{Lee2018} except that we have to take care of the cutoff function. Clearly, the claim is true at $t=0$, see \cite[Lemma 4.2]{Lee2018} for detailed computation. Due to the cutoff function $\Phi$, if the claim is false, there is $t_0\in (0,\tau]\cap (0,c_1K^{-1}]$ such that both {\bf (a)} and {\bf (b)} are true on $[0,t_0)$ and one of them fails at $t=t_0$. In particular, we have for all $z\in M$, $t\in [0,t_0]$, $y,u,v\in T_z^{1,0}M$ with $|y|=1$,
\begin{equation}\label{a1}
\begin{split}
W_{y,\bar y}&\leq -  e^{-c_2Kt};\\
|W_{u\bar v y\bar y }|^2&< (20+c_2\sqrt{Kt})W_{u\bar u}W_{v\bar v}.
\end{split}
\end{equation}

As in \cite[Page 1599]{Liu2014}, we may use polarization and \eqref{doubletime} to infer that for any $e_k,e_l\in T^{1,0}$ with unit $1$ and $e_i,e_j\in T^{1,0}$,
\begin{align}\label{a2}
|W_{i\bar jk\bar l}|^2 \leq C_n W_{i\bar i}W_{j\bar j}, \;\; |W_{i\bar jk\bar l}|^2 \leq C_nK |W_{i\bar i}|.
\end{align}
\noindent

{\bf Case 1:} Condition {\bf (a)} is  true on $[0,t_0)$ and fails at $t=t_0$. Then there is $p\in M$, $X_0\in T^{1,0}_p M$ with $|X_0|=1$ such that 
$$W_{X_0,\bar X_0}=- e^{-c_2Kt}.$$

Consider the following tensor $$A_{i\bar j}=\Phi R_{i\bar j}+\left[e^{-c_2Kt}-(n+1) \right] g_{i\bar j}=W_{i\bar j}+ e^{-c_2Kt}g_{i\bar j}$$
which satisfies $A(X_0,\bar X_0)=0$ and $A(Y,\bar Y)\leq 0$ for all $Y\in T^{1,0}_xM$, $x\in M$. We may assume $|X_0|_{g(t_0)}=1$ by rescaling.

Extend $X_0$ locally to a $T^{1,0}$ vector field around $(p,t_0)$ { such that at $(p,t_0)$,
\begin{equation}
\label{ext-1}
\begin{split}
\nabla_{\bar q} X^p&=0;\quad 
\nabla_{p}X^q =T^q_{pl}\;X^l.
\end{split}
\end{equation}
}
Locally, $X=X^i\frac{\partial}{\partial z^i}$. We will denote $\bar X=\overline{X^i}\frac{\partial}{\partial \bar z^i}=X^{\bar i}\frac{\partial}{\partial \bar z^i}$. Then $A(X,\bar X)$ defined a function locally and satisfies 
\begin{align}\label{max-1}
\Box \Big|_{(p,t_0)}A(X,\bar X) \geq 0.
\end{align}
where we denote $\heat$ by $\Box$ for notational convenience. Now we compute the evolution equation for $A(X,\bar X)$. At $(p,t_0)$, 
\begin{equation}\label{equ-A-1}
\begin{split}
\frac{\partial}{\partial t} A(X,\bar X)
&=\left(\partial_t A_{i\bar j}\right) X^iX^{\bar j} +A_{i\bar j} \left( \partial_t X^i X^{\bar j} +X^i \partial_t X^{\bar j}\right)\\
&=\left[\Phi\cdot \partial_t  R_{i\bar j}-[e^{-c_2Kt}-(n+1)]S_{i\bar j}- c_2Ke^{-c_2Kt}g_{i\bar j}\right]X^i X^{\bar j}\\
&\leq\Phi\cdot \partial_t  R_{i\bar j}\cdot X^i X^{\bar j}-\frac{1}{2}c_2K.
\end{split}
\end{equation}
provided that $c_2>>1$ is sufficiently large. Here we have used  \eqref{doubletime} and  the fact that for any $Y\in T^{1,0}_pM$,
\begin{align}\label{firstorder}
A_{X_0\bar Y}=0
\end{align}

Now we compute the $\Delta A(X,\bar X)$. We may in addition assume that at $(p,t_0)$, $g_{i\bar j}=\delta_{i\bar j}$. Using $\nabla g=0$, \eqref{ext-1} and \eqref{firstorder}, we have
\begin{equation}\label{equ-A-2}
\begin{split}
&\quad \Delta A(X,\bar X)\\
&=\frac{1}{2} g^{r\bar s} (\nabla_r \nabla_{\bar s}+\nabla_{\bar s}\nabla_r) \Big( A_{i\bar j }X^i X^{\bar j}\Big)\\
&=\Delta (\Phi R_{i\bar j})\cdot X^i X^{\bar j}+A_{i\bar j;\bar r} T^i_{ rp}X^pX^{\bar j} +A_{i\bar j;r}T^{\bar j}_{\bar r\bar q} X^iX^{\bar q}  +A_{i\bar j} T^i_{rp}T^{\bar j}_{\bar r\bar q}X^p X^{\bar q}\\
&=\Delta \Phi \cdot R_{i\bar j} X^i X^{\bar j}+\Phi \cdot \Delta R_{i\bar j}X^i X^{\bar j} + 2{\bf Re}\left( \nabla_r \Phi \cdot \nabla_{\bar r} R_{i\bar j}\cdot X^i X^{\bar j}\right)\\
&\quad +\nabla_{\bar r}\left(\Phi R_{i\bar j}\right) T^i_{ rp}X^pX^{\bar j} +\nabla_r\left( \Phi R_{i\bar j}\right)T^{\bar j}_{\bar r\bar q} X^iX^{\bar q}  +A_{i\bar j} T^i_{rp}T^{\bar j}_{\bar r\bar q}X^p X^{\bar q}\\
&=\Delta \Phi \cdot R_{i\bar j} X^i X^{\bar j}+\Phi \cdot \Delta R_{i\bar j}X^i X^{\bar j} + 2{\bf Re}\left( \nabla_r \Phi \cdot \nabla_{\bar r} (R_{X\bar X})\right)\\
&\quad +\Phi R_{i\bar j;\bar r} T^i_{ rp}X^pX^{\bar j} +\Phi  R_{i\bar j;r}T^{\bar j}_{\bar r\bar q} X^iX^{\bar q}  +A_{i\bar j} T^i_{rp}T^{\bar j}_{\bar r\bar q}X^p X^{\bar q}.
\end{split}
\end{equation}

By combining \eqref{equ-A-1} and \eqref{equ-A-2}, we have
\begin{equation}\label{eq-A}
\begin{split}
\Box A(X,\bar X)
&\leq -\frac{1}{2}c_2K+ \Phi \Box R_{i\bar j}\cdot X^i X^{\bar j} -2{\bf Re}\left( \nabla_r \Phi \cdot \nabla_{\bar r} (R_{X\bar X})\right)\\
&\quad -\Phi R_{i\bar j;\bar r} T^i_{ rp}X^pX^{\bar j} -\Phi  R_{i\bar j;r}T^{\bar j}_{\bar r\bar q} X^iX^{\bar q} \\
&\quad -\Delta \Phi \cdot R_{i\bar j}X^i X^{\bar j}-A_{i\bar j} T^i_{rp}T^{\bar j}_{\bar r\bar q}X^p X^{\bar q}\\
&=-\frac{1}{2}c_2K -\Delta \Phi \cdot R_{X\bar X}+(n+1-e^{-c_2Kt}) |T^i_{rX}|^2\\
& \quad -2{\bf Re}\left( \nabla_r \Phi \cdot \nabla_{\bar r} (R_{X\bar X})\right)+\Phi R_{X\bar Xp}\,^qR_{q}\,^p\\
&\quad-(n+1-e^{-c_2Kt})S_{X\bar X} 
\end{split}
\end{equation}
where we have used \eqref{firstorder} in the last step.

Since at $(p,t_0)$, we have $\nabla_r A_{X\bar X}=0$. Hence,
$$\nabla_{\bar r} \Phi \cdot R_{X\bar X}+\Phi \nabla_{\bar r} R_{X\bar X}=\left( (n+1)-e^{-c_2Kt}\right) g_{X\bar j}T^{\bar j}_{\bar r\bar X}.$$

On the other hand, as $\Phi R_{X\bar X} =(n+1-e^{-c_2Kt})$ at $(p,t_0)$ and $t_0\in [0,\tau]\cap [0,c_1K^{-1}]$, 
$$\Phi \geq \frac{1}{(n+1)K} \left[ n+1-e^{-c_2Kt}\right]\geq \frac{n}{(n+1)K}.$$

Hence, by using the properties of $\phi$ and combining with \eqref{doubletime}, \eqref{metric-equ} and $r_0>>K^2$, we have
\begin{equation}
\begin{split}\label{cutoff-1}
 -2{\bf Re}\left( \nabla_r \Phi \cdot \nabla_{\bar r} (R_{X\bar X})\right)
&=\frac{|\nabla_r \Phi|^2}{\Phi}R_{X\bar X} -\frac{n+1-e^{-c_2Kt}}{\Phi}g_{X\bar j}T^{\bar j}_{\bar r\bar X}\nabla_r \Phi\\
&\leq \frac{c'_n}{\sqrt{r_0}}.
\end{split}
\end{equation}

Similarly, 
\begin{equation}\label{cutoff-2}
\begin{split}
-\Delta \Phi \cdot R_{X\bar X}&\leq \frac{c_n}{\sqrt{r_0}}.
\end{split}
\end{equation}

We now combine \eqref{eq-A}, \eqref{cutoff-1}, \eqref{cutoff-2} and \eqref{doubletime} to show that if $c_2$ is sufficiently large depending only on $n$, then 
\begin{equation}
\begin{split}
\Box A(X,\bar X) &\leq -\frac{1}{3}c_2K+W_{X\bar X p\bar q}R_{q\bar p}\\
&\leq -\frac{1}{3}c_2K+C_n K |W_{X\bar X}|\\
&<0
\end{split}
\end{equation}
where we have used \eqref{a2}.  But this contradicts with \eqref{max-1}.

\noindent\\

{\bf Case 2:} Condition {\bf (b)} is fail at $t=t_0$. Then there is $p\in M$, $x_0,u_0,v_0\in T^{1,0}_pM$ with $|x_0|_{t_0}=1$ such that 
$$|W_{u_0\bar v_0 x_0\bar x_0}|^2=(20+c_2\sqrt{Kt}) W_{u_0 \bar u_0}W_{v_0\bar v_0}.$$
By rescaling, we may assume $|u_0|_{t_0}=|v_0|_{t_0}=1$. 
As in case 1, we extend $x_0,u_0,v_0$ to local vector field $X,U,V$ around $(p,t_0)$. {We extend $x_0$ so that along each geodesics $\gamma$  emanating from $p$, $\nabla_{\dot\gamma}X=0$ at $t=t_0$ and constant in $t$. On the other hand, we extend $u_0,v_0$ to $U$ and $V$ such that at $(p,t_0)$,
\begin{equation}\label{ext-2}
\begin{split}
\nabla_{\bar s} U^r=0 &, \quad \nabla_{p} U^r=T^r_{p q}U^q,\quad \Box U^r=\frac{1}{2}S^r_p U^p;\\
 \nabla_{\bar s} V^r=0& ,\quad \nabla_{p} V^r=T^r_{p q}V^q;\quad  \Box V^r=\frac{1}{2}S^r_p V^p;\\
 \nabla_{\bar s} X^r=0& ,\quad \nabla_{p} X^r=0;\quad\quad\quad   \Box X^r=0.\\
\end{split}
\end{equation}
}
Hence the function $$F(x,t)=g_{X\bar X}^{-2}|W_{U\bar V X\bar X}|^2-(20+c_2\sqrt{Kt})W_{U\bar U}W_{V\bar V}$$ attains its local maximum at $(p,t_0)$ and therefore satisfies
\begin{align}\label{max-2}
\Box F\Big|_{(p,t_0)}\geq 0.
\end{align}

We now differentiate each of them carefully. Using \eqref{ext-2} and Lemma \ref{Ricci-evo}, a similar calculation as in {\bf Case 1} yields 
\begin{equation*}
\begin{split}
\Box W_{U\bar U}
&=\Box W_{i\bar j}\cdot U^i U^{\bar j}+W_{i\bar j} U^i \Box U^{\bar j} +W_{i\bar j}\Box U^i\cdot  U^{\bar j}\\
&\quad -g^{r\bar s}W_{i\bar j} U^i_{;r} U^{\bar j}_{;\bar s}
-g^{r\bar s}W_{i\bar j;r}  U^iU^{\bar j}_{;\bar s}
-g^{r\bar s}W_{i\bar j;\bar s} U^i_{;r} U^{\bar j}\\
&=\Box \Phi \cdot R_{U\bar U}-2{\bf Re}\left(g^{r\bar s}\Phi_r \,\nabla_{\bar s}R_{i\bar j}\cdot U^i U^{\bar j} \right)+(n+1)S_{U\bar U}\\
&\quad -(n+1)\Phi S_{U\bar U}+(n+1) g_{i\bar j} g^{r\bar s} T^i_{rU}T^{\bar j}_{\bar s\bar U}\\
&\quad -g^{r\bar s}\Phi_r R_{U\bar j} T^{\bar j}_{\bar s \bar U}-g^{r\bar s} \Phi_{\bar s} R_{i\bar U} T^i_{rU}+\Phi R_{U\bar U k}\,^p R_{p}^k.
\end{split}
\end{equation*}
By Proposition \ref{Improved-est}, we have $|\nabla Rm|\leq C(n,K)t^{-1/2}$. Using the choice of $\phi$, Lemma \ref{dist-esti}, \eqref{doubletime},  and $r_0>>K$, we have 
\begin{equation}\label{evo-Ric-1}
\begin{split}
\Box W_{U\bar U}
&\leq C_nK+\frac{C_n}{\sqrt{t}}+\Phi R_{U\bar U k}\,^p R_{p}^k\\
&\leq \frac{C_n}{\sqrt{t}}-C_nKW_{U\bar U}
\end{split}
\end{equation}
where we have used \eqref{a1} and \eqref{a2} in the last inequality.
Similarly, 
\begin{equation}\label{evo-Ric-2}
\begin{split}
\Box W_{V\bar V}
& \leq \frac{C_n}{\sqrt{t}}-C_nKW_{V\bar V}.
\end{split}
\end{equation}

By combining \eqref{evo-Ric-1}, \eqref{evo-Ric-2} with \eqref{a1}, \eqref{a2}, \eqref{doubletime} and using the fact that $c_2>>1$, we arrive at the following inequality.
\begin{equation}
\begin{split}
&\quad \Box \left[(20+c_2\sqrt{Kt})W_{U\bar U}W_{V\bar V}\right]\\
&\geq -2(20+c_2\sqrt{Kt}){\bf Re}\left(g^{r\bar s}\nabla_r W_{U\bar U} \cdot \nabla_{\bar s}W_{V\bar V} \right)+\frac{c_2}{2}\sqrt{\frac{K}{t}} W_{U\bar U }W_{V\bar V}.
\end{split}
\end{equation}

\noindent\\
Now we derive the evolution equation of $|W_{U\bar VX\bar X}|^2$. Similar to the computation of $\Box W_{U\bar U}$, using \eqref{ext-2} and Lemma \ref{Rm-evo}, we have 
\begin{equation}\label{evo-Rm-1}
\begin{split}
\Box W_{U\bar VX\bar X}
&=\Box (\Phi R)_{i\bar jk\bar l}\cdot U^iV^{\bar j}X^kX^{\bar l}- \Box B_{i\bar jk\bar l}\cdot U^iV^{\bar j}X^kX^{\bar l}\\
&\quad + W_{i\bar jk\bar l}(\Box U^i) V^{\bar j}X^kX^{\bar l}+ W_{i\bar jk\bar l} U^i(\Box V^{\bar j})X^kX^{\bar l}\\
&\quad -g^{r\bar s} (\Phi R_{i\bar jk\bar l})_{;r}\,U^iV^{\bar j}_{;\bar s}X^kX^{\bar l} -g^{r\bar s}(\Phi R_{i\bar jk\bar l})_{;\bar s}\,U^i_{;r}V^{\bar j}X^kX^{\bar l} \\
&\quad  -W_{i\bar jk\bar l} U^i_{;r}V^{\bar j}_{;\bar s}X^kX^{\bar l} \\
&= \Phi g^{r\bar s}\Big[R_{U\bar V r}\,^pR_{p\bar sX\bar X}+R_{r\bar V X}\,^p R_{U\bar s p\bar X} -R_{r\bar Vp\bar X} R_{U\bar s X}\,^p\Big] \\
&\quad -\frac{1}{2} \Phi \left[S^p_X R_{U\bar Vp\bar X}+S^{\bar q}_{\bar X}R_{U\bar V X\bar q} \right]\\
&\quad +  \left(S_{U\bar V}g_{X\bar X}+g_{U\bar V}S_{X\bar X}+S_{U\bar V}g_{X\bar X}+g_{U\bar V}S_{X\bar X}\right)\\
&\quad +  g^{r\bar s} B_{i\bar jX\bar X} T^i_{rU}T^{\bar j}_{\bar s\bar V}-\frac{1}{2} \left(B_{i\bar VX\bar X}S^i_U +B_{U\bar j X\bar X}S^{\bar j}_{\bar V} \right)\\
&\quad - \left(g^{r\bar s} \Phi_r \nabla_{\bar s}R_{i\bar jk\bar l}U^iV^{\bar j}X^k X^{\bar l}+g^{r\bar s} \Phi_{\bar s} \nabla_{r}R_{i\bar jk\bar l}U^iV^{\bar j}X^k X^{\bar l} \right)\\
&\quad -\left( g^{r\bar s}\Phi_r T^{\bar j}_{\bar s \bar V}R_{U\bar j X\bar X}+g^{r\bar s}\Phi_{\bar s} T^{i}_{r U}R_{i\bar V X\bar X}\right)+\Box \Phi \cdot R_{U\bar VX\bar X}.
\end{split}
\end{equation}
The equation of $\Box W_{V\bar UX\bar X}$ is similar. 

 Noted that we have $|\nabla Rm|\leq C(n,K)t^{-1/2}$ by Proposition \ref{Improved-est}. Therefore by combining \eqref{evo-Rm-1}, \eqref{doubletime}, \eqref{a2} and \eqref{a1} and using the property of $\phi$ and $r>>K$, we have
\begin{equation*}
\begin{split}
\Box |W_{U\bar VX\bar X}|^2
&\leq -|\nabla W_{U\bar VX\bar X}|^2-|\bar \nabla W_{U\bar VX\bar X}|^2+C_n\sqrt{\frac{K}{t}}W_{U\bar U}W_{V\bar V}\\
&\quad +\Phi \left |R^{\bar s}\,_{\bar V X}\,^p R_{U\bar s p\bar X} -R^{\bar s}\,_{\bar Vp\bar X} R_{U\bar s X}\,^p\right| |W_{V\bar UX\bar X}|\\
&\quad +\Phi \left |R^{\bar s}\,_{\bar U X}\,^p R_{V\bar s p\bar X} -R^{\bar s}\,_{\bar Up\bar X} R_{V\bar s X}\,^p\right| |W_{U\bar VX\bar X}|
\end{split}
\end{equation*}
The main trouble is the quadratic term appeared on the right hand side because of $\Phi$ there and $U,V$ take places at different curvature term.
\begin{equation}\label{cut-prob}
\begin{split}
&\quad \Phi \left |R^{\bar s}\,_{\bar V X}\,^p R_{U\bar s p\bar X} -R^{\bar s}\,_{\bar Vp\bar X} R_{U\bar s X}\,^p\right| \\
&\leq \frac{1}{\Phi}\left |W^{\bar s}\,_{\bar V X}\,^p W_{U\bar s p\bar X} -W^{\bar s}\,_{\bar Vp\bar X} W_{U\bar s X}\,^p\right|+C_nK\\
&\leq \left(\frac{C_n}{\Phi}+C_nK \right)\sqrt{W_{U\bar U}W_{V\bar V} }
\end{split}
\end{equation}
where we have used \eqref{a1} and \eqref{a2} in the last inequality. On the other hand, since at $(p,t_0)$, 
\begin{equation}
\begin{split}
10&\leq (20+c_2\sqrt{Kt}) W_{U\bar U }W_{V\bar V}\\
&= |\Phi R_{U\bar V X\bar X}-B_{U\bar V X\bar X}|^2\\
&\leq 8+2\Phi^2 K^2.
\end{split}
\end{equation}
Therefore, $\Phi \geq K^{-1}$ at $(p,t_0)$. Combines with \eqref{cut-prob}, 
\begin{equation}\label{cut-prob}
\begin{split}
 \Phi \left |R^{\bar s}\,_{\bar V X}\,^p R_{U\bar s p\bar X} -R^{\bar s}\,_{\bar Vp\bar X} R_{U\bar s X}\,^p\right| 
&\leq C_nK \sqrt{W_{U\bar U}W_{V\bar V}}.
\end{split}
\end{equation}

And hence at $(p,t_0)$, 
\begin{equation}
\begin{split}
\heat F&\leq 2(20+c_2\sqrt{Kt}){\bf Re}\left(g^{r\bar s}\nabla_r W_{U\bar U} \cdot \nabla_{\bar s}W_{V\bar V} \right)\\
&\quad  -|\nabla W_{U\bar VX\bar X}|^2-|\bar \nabla W_{U\bar VX\bar X}|^2\\
&\quad -\frac{c_2}{4}\sqrt{\frac{K}{t}} W_{U\bar U}W_{V\bar V}+2S_{X\bar X}|W_{U\bar VX\bar X}|^2
\end{split}
\end{equation}

By using the fact that $\nabla F=0$ and $F=0$ at $(p,t_0)$, one can conclude that
\begin{equation}
2(20+c_2\sqrt{Kt}){\bf Re}\left(g^{r\bar s}\nabla_r W_{U\bar U} \cdot \nabla_{\bar s}W_{V\bar V} \right) \leq |\nabla W_{U\bar VX\bar X}|^2+|\bar \nabla W_{U\bar VX\bar X}|^2.
\end{equation}

Using \eqref{doubletime} and \eqref{a2} again, we deduce that $$2S_{X\bar X}|W_{U\bar VX\bar X}|^2\leq C_nKW_{U\bar U}W_{V\bar V}$$ and hence at $(p,t_0)$, 
\begin{align}
\heat F<-\frac{c_2}{8}\sqrt{\frac{K}{t}} W_{U\bar U} W_{V\bar V}
\end{align}
which contradicts with \eqref{max-2} provided that $c_2(n)>>1$. This proves the claim.
\end{proof}
The assertion follows by letting $r_0\rightarrow\infty$.
\end{proof}

An immediate consequence is the following splitting theorem in \K case based on the strong maximum principle along the noncompact \KR flow and the De Rham decomposition theorem.
\begin{cor}
Let $g(t)$ be a complete solution to the \KR flow on a noncompact simply connected complex manifold $M^n$ with bounded curvature. If the initial metric $g_0$ has non-positive bisectional curvature. Then for sufficiently small $t>0$, either $g(t)$ has negative Ricci curvature on $M$ or $(M,g(t))$ splits holomorphically isometrically into a product $\mathbb{C}^k\times N^{n-k}$.
\end{cor}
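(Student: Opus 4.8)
The plan is to combine the curvature control from Theorem~\ref{preserve-Ric} with Hamilton's strong maximum principle for tensors and the de Rham decomposition theorem. Since the Hermitian Ricci flow coincides with the \KR flow when the torsion vanishes, Theorem~\ref{preserve-Ric} applies to $g(t)$: there is $t_1>0$ so that on $(0,t_1]$ the metric $g(t)$ is complete \K with bounded curvature, $\Ric_{g(t)}\le 0$, and the pinching $|R_{u\bar vx\bar x}|^2\le(20+c_2\sqrt{Kt})|g_{x\bar x}|^2|R_{u\bar u}||R_{v\bar v}|$ holds for all $x,u,v\in T^{1,0}M$. The first step is pointwise: fix $(p,t)$ and suppose $X\in T^{1,0}_pM$ satisfies $\Ric(X,\bar X)=0$. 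Taking $u=X$ and $v$ arbitrary in the pinching forces $R_{X\bar v x\bar x}=0$ for all $v,x$, and then the polarization argument used in the proof of Proposition~\ref{al-pre-Ric}, together with the symmetries of the \K curvature tensor, upgrades this to $R_{X\bar v k\bar l}=0$ for all indices. Hence the null distribution $\mathcal{K}_{p,t}:=\{X\in T^{1,0}_pM:\ \Ric(X,\bar X)=0\}$ of $-\Ric$ is exactly the null distribution of the full Chern $(=$ Riemann$)$ curvature tensor; in particular the curvature tensor vanishes whenever one of its entries lies in $\mathcal{K}_{p,t}$.

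Next I would run Hamilton's strong maximum principle for systems on the non-negative Hermitian tensor $-\Ric$ along the flow. By Lemma~\ref{Ricci-evo} with $T\equiv0$, the reaction term of $-R_{i\bar j}$ is $-\left(R_{i\bar jk}{}^pR_p{}^k-\frac{1}{2}(R^p_iR_{p\bar j}+R^{\bar q}_{\bar j}R_{i\bar q})\right)$. Contracting with a null eigenvector $X$ of $-\Ric$ and using that $R_{i\bar q}X^i=0$ (as $-\Ric\ge 0$) together with the vanishing $R_{X\bar X x\bar x}=0$ established above, every term of this reaction vanishes at $X$; thus the null-eigenvector condition of Hamilton's theorem is met. (The classical statement is for compact manifolds, but since $g(t)$ has bounded curvature the argument localizes, or one invokes the parabolic-rescaling/weak-maximum-principle machinery already developed in this paper.) The conclusion is that for $t>0$ the distribution $\mathcal{K}\subset T^{1,0}M$ has locally constant complex rank, is invariant under parallel transport, and is invariant under the reaction ODE; by connectedness its rank $k$ is globally constant, so $\mathcal{K}$ is a parallel, $J$-invariant holomorphic distribution, its $g(t)$-orthogonal complement $\mathcal{K}^\perp$ is parallel as well, and by the first step $\mathcal{K}$ is flat.

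Finally I would split the cases. If $k=0$, then $-\Ric$ is positive definite, i.e. $\Ric_{g(t)}<0$ on $M$. If $k\ge 1$, then since $M$ is simply connected and $(M,g(t))$ is complete, the de Rham decomposition theorem in the \K category, applied to the parallel $J$-invariant splitting $TM=\mathcal{K}\oplus\mathcal{K}^\perp$, gives a holomorphic isometric splitting $(M,g(t))\cong \widetilde M_1\times N^{n-k}$ with $T^{1,0}\widetilde M_1=\mathcal{K}$; since $\widetilde M_1$ is a complete, simply connected, flat \K manifold it is biholomorphically isometric to $\mathbb{C}^k$ with its Euclidean metric, giving the claimed product $\mathbb{C}^k\times N^{n-k}$. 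Moreover, by uniqueness of the \KR flow with bounded curvature a splitting at one time persists for all times in the interval, so $k$ is independent of $t$ and the dichotomy holds for all sufficiently small $t>0$. I expect the main obstacle to be the clean verification that the reaction term of $-\Ric$ annihilates its own kernel --- this is precisely where Theorem~\ref{preserve-Ric}(2) is needed, since non-positivity of the bisectional curvature is not itself preserved by the flow --- together with the routine but not automatic task of making Hamilton's strong maximum principle rigorous in the complete noncompact bounded-curvature setting; once those are in place, the reduction to a parallel flat $J$-invariant distribution and the de Rham splitting are standard.
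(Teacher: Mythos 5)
Your proposal is correct and follows essentially the same route as the paper: invoke Theorem \ref{preserve-Ric} for the sign and pinching conditions, use the pinching (2) to see that the kernel of $\Ric_{g(t)}$ lies in the null space of the full curvature tensor, apply Hamilton's strong maximum principle (as in \cite[Theorem 12.50]{ChowRicciflow2}, following \cite{Liu2014}) to make that kernel parallel in space and time, and conclude with the de Rham decomposition; your explicit verification of the null-eigenvector condition plays the role of the paper's auxiliary tensor $e^{-At}\Ric_{g(t)}$.
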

\begin{proof}
Since we have established the preservation of curvature condition in Theorem \ref{preserve-Ric} in the noncompact case. By theorem \ref{preserve-Ric}, the tensor $e^{-At}Ric_{g(t)}$ satisfies the null vector condition for $A$ sufficiently large and $t$ sufficiently small. Then we can apply the standard Strong maximum principle argument in \cite[Theorem 12.50]{ChowRicciflow2} to show that the kernel of $Ric_{g(t)}$ is parallel in space and time, see also \cite[Page 1602-1603]{Liu2014} for the original argument in compact case. The flatness of the kernel follows by the second conclusion in Theorem \ref{preserve-Ric}.
\end{proof}

 As an application of Strong maximum principle, we have the following.
\begin{thm}\label{main-thm}
Suppose $(M,g(t)),\;t\in[0,\tau]$ is a complete solution to \eqref{HRF} satisfying the assumption in Theorem \ref{preserve-Ric}. If the initial metric $g_0$ has quasi-negative Chern-Ricci curvature, then $Ric(g(t))<0$ on $M\times (0,\tau']$ for some $\tau'>0$. In particular, $M$ supports a \K metric which is possibly incomplete.
\end{thm}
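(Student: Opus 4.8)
The plan is to upgrade the non-strict bound $Ric_{g(t)}\le 0$ of Theorem~\ref{preserve-Ric} to strict negativity by Hamilton's strong maximum principle for systems, and then to read off the \K metric as the negative Chern--Ricci form of the evolved metric; the quasi--negativity of $Ric_{g_0}$ enters only to rule out the degenerate alternative in the strong maximum principle. First shrink $\tau$ so that $\tau\le c_1K^{-1}$, so that Theorem~\ref{preserve-Ric} applies on $M\times[0,\tau]$ and delivers both $Ric_{g(t)}\le 0$ and the pinching inequality $|R_{u\bar vx\bar x}|^2\le(20+c_2\sqrt{Kt})\,|g_{x\bar x}|^2\,|R_{u\bar u}|\,|R_{v\bar v}|$ for all $x,u,v\in T^{1,0}M$. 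Put $M_{i\bar j}:=-R_{i\bar j}(g(t))\ge 0$. By Lemma~\ref{Ricci-evo}, $M$ solves a reaction--diffusion system
$$\heat M_{i\bar j}=g^{r\bar s}\big(T^p_{ri}\nabla_{\bar s}M_{p\bar j}+T^{\bar q}_{\bar s\bar j}\nabla_rM_{i\bar q}\big)+\mathcal N(M)_{i\bar j},$$
with bounded first--order (drift) coefficients, the reaction term being $\mathcal N(M)_{i\bar j}=R_{i\bar jk}\,^pM_p\,^k+\tfrac12\big(S_i^pR_{p\bar j}+S_{\bar j}^{\bar q}R_{i\bar q}\big)+g^{r\bar s}T^p_{ri}T^{\bar q}_{\bar s\bar j}M_{p\bar q}$, which is linear in $M$ with bounded coefficients.

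The key point is to verify Hamilton's null--eigenvector condition for $\mathcal N$ on the boundary of the cone $\{M\ge 0\}$: if $X\in T^{1,0}_xM$ is a null vector of $M$ at $(x,t)$, then $\mathcal N(M)(X,\bar X)\ge 0$. Since $M\ge 0$ and $M(X,\bar X)=0$, one has $R_{p\bar X}=R_{X\bar q}=0$ for all $p,q$, so the $S$--terms vanish. Next, the pinching inequality applied with $v=X$, for which $|R_{v\bar v}|=M(X,\bar X)=0$, forces $R_{u\bar Xx\bar x}=0$ for all $u,x$, and in particular $R_{X\bar Xk\bar k}=0$; diagonalising $Ric$ then gives $R_{X\bar Xk}\,^pM_p\,^k=\sum_k R_{X\bar Xk\bar k}\,\nu_k=0$, where $\nu_k\ge 0$ are the eigenvalues of $M$. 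Finally $g^{r\bar s}T^p_{rX}T^{\bar q}_{\bar s\bar X}M_{p\bar q}\ge 0$ because the torsion factor is positive semidefinite in $(p,q)$ and $M\ge 0$. (Possibly after the exponential normalisation $e^{-At}M$, $A=A(n,K)$ large, used in the corollary above, which leaves $\ker M$ unchanged.)

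With the null--eigenvector condition in hand, I would apply Hamilton's strong maximum principle for systems in the complete noncompact, bounded--curvature setting (\cite[Theorem~12.50]{ChowRicciflow2}, localised by a cutoff as in the proof of Theorem~\ref{preserve-Ric}; cf.\ \cite[pp.~1602--1603]{Liu2014}): there is $\tau'\in(0,\tau]$ such that for every $t\in(0,\tau']$ the kernel $\mathcal K:=\ker Ric_{g(t)}\subseteq T^{1,0}M$ is a parallel distribution, the same for all such $t$. If $\mathcal K\ne 0$, then $\mathcal K_x\ne 0$ at every $x$ ($M$ being connected), and letting $t\to 0^+$ and using $g(t)\to g_0$ smoothly gives $Ric_{g_0}(X,\bar X)=0$ for $0\ne X\in\mathcal K_x$; taking $x=z$ contradicts the quasi--negativity $Ric_{g_0}(X,\bar X)\le\kappa(z)g_0(X,\bar X)<0$. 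Hence $\mathcal K=0$, i.e.\ $Ric_{g(t)}<0$ on $M\times(0,\tau']$. Fixing $t_1\in(0,\tau']$ and using $R_{i\bar j}(g(t_1))=-\p_i\p_{\bar j}\log\det g(t_1)$, the real $(1,1)$--form $\chi:=\ddb\log\det g(t_1)$ is globally well defined (the coordinate ambiguity of $\log\det g$ is pluriharmonic, hence annihilated by $\p\bar\p$), $d$--closed, and has local coefficients $-R_{i\bar j}(g(t_1))>0$; since a closed positive real $(1,1)$--form is a \K form, $\chi$ is a \K metric on $M$, possibly incomplete because we have no lower bound for it. This proves Theorem~\ref{main-thm}.

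The main obstacle is the second paragraph. From $Ric\le 0$ alone, the quadratic curvature term $R_{X\bar Xk}\,^pM_p\,^k$ in the Ricci reaction need not vanish at a null eigenvector of $Ric$, and it is precisely conclusion~(2) of Theorem~\ref{preserve-Ric} --- proved there jointly with $Ric\le 0$ for exactly this reason --- that forces it to vanish; one must also check that the non--K\"ahler torsion contributions, both the zeroth--order term $g^{r\bar s}T^p_{ri}T^{\bar q}_{\bar s\bar j}M_{p\bar q}$ and the first--order drift $T*\nabla M$, do not spoil the maximum principle, and run Hamilton's strong maximum principle in the complete noncompact category. Granted all this, the extraction of the \K metric in the third paragraph is routine.
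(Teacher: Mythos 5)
Your verification of the null--eigenvector condition is correct (the $S$--terms die because $M_{p\bar X}=0$, the quadratic term dies because conclusion (2) of Theorem \ref{preserve-Ric} with $v=X$ forces $R_{u\bar Xx\bar x}=0$, and the torsion square pairs two nonnegative Hermitian forms), and this is exactly the mechanism that powers the paper's computation as well. But your overall route is genuinely different from the paper's. The paper never invokes a strong maximum principle for tensor systems: given the point $y$ where $Ric(g_0)<0$ and an arbitrary target point $x$, it chooses a bounded domain $\Omega$ containing both, solves the Dirichlet heat equation on $\Omega$ with nonnegative initial data $\phi_0$ supported near $y$ and satisfying $Ric(g_0)+\phi_0g_0\le 0$, applies the \emph{scalar} strong maximum principle only to $\phi$ to get $\phi>0$ on $\Omega\times(0,\tau']$, and then shows $A^{\e}=Ric_{g(t)}+e^{-kt}\phi^2g(t)-\e e^{Bt}g(t)<0$ by a purely local second--derivative test at a first violation time (carrying over the compact computation of \cite{Lee2018}), letting $\e\to0$ at the end; connectedness of $\Omega$ is what transports the strict negativity from $y$ to $x$. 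Your route instead deduces a parallel kernel for $-Ric$ via a systems strong maximum principle and contradicts quasi--negativity as $t\to0^+$. What the paper's barrier buys is elementariness and locality (only a weak, pointwise maximum principle for tensors is needed, and the same device is reused in Proposition \ref{inf-beh}); what your route would buy is a cleaner qualitative statement in the spirit of the splitting corollary following Theorem \ref{preserve-Ric}.

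The soft spot is exactly the step you flag but do not carry out: \cite[Theorem 12.50]{ChowRicciflow2} is formulated for reactions that are fiberwise functions of the evolving tensor and for equations without the drift $T*\nabla Ric$, whereas here the ``reaction'' $R_{i\bar jk}{}^pM_p{}^k-\frac12(S^p_iM_{p\bar j}+S^{\bar q}_{\bar j}M_{i\bar q})+g^{r\bar s}T^p_{ri}T^{\bar q}_{\bar s\bar j}M_{p\bar q}$ has the ambient Chern curvature and torsion as external coefficients, so the theorem cannot be quoted off the shelf, and a null--vector identity at exact kernel points is not by itself the hypothesis of a quotable systems SMP. To close this you would have to rerun the kernel--propagation argument in this setting; the ingredients are all at hand in the paper: the drift can be absorbed by extending the null vector as in \eqref{ext-1} ($\nabla_{\bar q}X^p=0$, $\nabla_pX^q=T^q_{pl}X^l$), and the dangerous quadratic term has a \emph{linear}, not merely square--root, lower bound, since (2) with $u=v=X$ gives $|R_{X\bar Xk\bar k}|\le C(n)|R_{X\bar X}|$ for $t\le c_1K^{-1}$, so at a minimizing eigenvector the right--hand side is $\ge -C(n)K\,M_{X\bar X}$ (the $S$--terms are also $O(M_{X\bar X})$ there because the off--diagonal entries $M_{p\bar X}$ are proportional to the smallest eigenvalue at an eigenvector). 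Note that the paper's own corollary after Theorem \ref{preserve-Ric} runs your kind of argument, but only for the K\"ahler--Ricci flow, where the torsion drift is absent; in the Hermitian case the paper deliberately sidesteps the systems SMP with the barrier. Also, the strong maximum principle needs no cutoff localisation (it is a local propagation statement), so that parenthetical is a red herring. Your final step --- that $-Ric(g(t_1))$ is a closed positive $(1,1)$--form and hence a possibly incomplete \K metric --- coincides with the paper's conclusion.
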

\begin{proof}
The argument is exactly the same as the argument in the  compact case \cite{Lee2018} except that we construct the barrier by solving Dirichlet problem on some compact set instead of the whole manifold $M$. We will closely follow the argument in \cite{ChowRicciflow2}. We here only point out the necessary modifications. Let $\tau'=\min\{c_1K^{-1},\tau\}$ be the number obtained from Theorem \ref{preserve-Ric}.

Let $y\in M$ be a point at which the Chern-Ricci curvature of $g_0$ is negative. For any $x\in M$, let $\Omega$ be a connected open set with smooth boundary and containing both $x$ and $y$. Let $\phi_0$ be a smooth nonnegative function such that $\phi_0(y)>0$, $\phi_0=0$ near $\partial \Omega$ and $$Ric(g_0)+\phi_0g_0\leq 0$$ on $\Omega$. Let $\phi(z,t)$ be the solution to the heat equation
\begin{equation}
\label{heat}
\begin{split}
\left( \frac{\partial}{\partial t}-\Delta_{g(t)}\right) \phi(x,t)&=0,\quad\text{on}\;\;\Omega\times [0,\tau'];\\
\phi(x,0)=\phi_0,\quad&\text{and}\quad \phi(x,t)|_{\partial\Omega}=0.
\end{split}
\end{equation}
It then follows by strong maximum principle that $\phi(x,t)>0$ on $\Omega\times(0,\tau']$. We may assume that $\phi(x,t)\leq 1$ by rescaling. As in \cite{Lee2018}, we consider the tensor 
$$A^\e=Ric_{g(t)}+e^{-kt}\phi^2 g(t)-\e e^{Bt}g(t)$$
where $B,k$ is some large constant. Then if $A^\e$ fail to be negative on $\overline{\Omega}\times [0,\tau']$, it can only happen at $(x_0,t_0)$ where $x_0\in int(\Omega)$ and $t_0>0$. Without loss of generality, we may assume $t_0$ to be the first time such that $A^\e$ fails to be negative. And we may apply second derivatives test at $t=t_0$.

Now the argument in \cite{Lee2018} can be carried over since the argument is purely local. Hence we can show that for any $\e>0$, $A^\e< 0$ on $\Omega\times [0,\tau']$. By letting $\e\rightarrow 0$, it shows that $Ric(x,t)<0$ for $t\in (0,\tau']$. Since $x$ is arbitrary, this completes the proof. By taking $h=-Ric(g(t_1))$, we see that $(M,h)$ is a \K manifold since the Chern-Ricci form is $d$-closed by definition.
\end{proof}

\section{Existence of \KE metric}
In this section, we will show that under the assumption in Theorem \ref{preserve-Ric}, if in addition $g_0$ has uniformly negative Chern-Ricci curvature outside a compact set, then $M$ supports a complete \KE metric with negative scalar curvature.

We first show that the uniform negativity at infinity will be preserved  along the flow with bounded Chern curvature and torsion.
\begin{prop}\label{inf-beh}
Suppose $(M,g(t)),\;t\in [0,\tau]$ is a complete solution to \eqref{HRF} with 
$$\sup_{M\times [0,\tau]}|Rm|+|T|^2\leq K_0$$
for some $K_0>0$. If there is $p\in M$, $r>0$ such that outside $B_{g_0}(p,r)$, the Chern-Ricci curvature $Ric(g_0)<-\delta$ for some $\delta>0$, then there is $\tilde \tau(n,K_0,\delta)>0$ so that on $[0,\tau]\cap [0,\tilde \tau]$, $Ric(g(t))<-\delta/2$ outside $B_{g_0}(p,r+1)$.
\end{prop}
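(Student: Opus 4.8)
Fix $p,r,\delta$ as in the statement. Since $\partial_t g_\ijb=-S_\ijb$ and $|S|\leq|Rm|\leq K_0$ along $g(t)$, there is $c_n>0$ with $\tfrac12 g_0\leq g(t)\leq 2g_0$ on $M\times\big([0,\tau]\cap[0,c_nK_0^{-1}]\big)$; all of the following is on this time interval, and $C$ will denote a constant depending only on $n,K_0,\delta$. The plan is to run a tensor maximum principle, following the scheme of the proof of Proposition \ref{al-pre-Ric} (the argument for condition (a)/Case~1). Fix a large $\Lambda=\Lambda(n,K_0,\delta)$, set $c(t)=\tfrac{7\delta}{12}+\tfrac{\delta}{12}e^{-c_2K_0t}$ (so $\tfrac{7\delta}{12}\leq c(t)\leq\tfrac{2\delta}{3}$) with $c_2=c_2(n,K_0,\delta)$ to be chosen, and for $r_0\gg 1+r+K_0^{-1}$ pick a plateau cut-off $\eta_{r_0}(x)=\psi\big(d_{g_0}(x,p)\big)$ equal to $1$ on $B_{g_0}(p,r_0)\setminus B_{g_0}(p,r+\tfrac34)$, equal to $0$ on $B_{g_0}(p,r+\tfrac12)$ and outside $B_{g_0}(p,2r_0)$, with $0\leq\psi\leq1$, $|\psi'|^2\leq C_0\psi$, and (using Lemma \ref{dist-esti} and Calabi's trick) $\partial_i\partial_{\bar j}\eta_{r_0}\leq\big(C+O(r_0^{-2})\big)(g_0)_\ijb$, the $O(\cdot)$ and the first-order terms being $O(r_0^{-1})$ on the outer transition region. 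Define the Hermitian $(1,1)$-tensor
\be
A_\ijb(x,t)=\eta_{r_0}(x)R_\ijb(x,t)+\big[c(t)-\Lambda(1-\eta_{r_0}(x))\big]g_\ijb(x,t).
\ee
Using $R_\ijb(\cdot,0)<-\delta g_0$ outside $B_{g_0}(p,r)$, $\eta_{r_0}=0$ on $B_{g_0}(p,r+\tfrac12)$, $c(0)=\tfrac{2\delta}{3}$, $|R_\ijb|\leq C_nK_0 g_0$ and $\Lambda$ large, one checks directly that $A_\ijb(\cdot,0)<0$ on $M$; likewise $A_\ijb(\cdot,t)<0$ on $\{\eta_{r_0}\leq\tfrac12\}$ for all $t$, a set whose complement is precompact.

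Suppose the preservation fails. Then there are a first time $t_0$, a point $x_0$, and $X_0\in T^{1,0}_{x_0}M$ with $|X_0|_{g(t_0)}=1$ such that $A(X_0,\bar X_0)=0$ and $A(Y,\bar Y)\leq0$ for all $Y$; by the previous remark $x_0$ lies in a fixed compact set, so the second-derivative test applies. Evaluating $A(X_0,\bar X_0)=0$ and using $|R_\ijb|\leq C_nK_0 g$, $0\leq\eta_{r_0}\leq1$, $c(t_0)\leq\delta$ gives $\Lambda(1-\eta_{r_0}(x_0))\leq C_nK_0+\delta$, hence $\eta_{r_0}(x_0)\geq\tfrac12$ once $\Lambda\geq2(C_nK_0+\delta)$, so at $x_0$ one may divide by $\eta_{r_0}$. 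Since $A$ is negative semidefinite at $(x_0,t_0)$ with null vector $X_0$, one also has $A(X_0,\bar Y)=0$ for all $Y$, i.e.\ $R_{X_0\bar j}=\mu\,g_{X_0\bar j}$ at $(x_0,t_0)$ with $\mu:=-\eta_{r_0}(x_0)^{-1}\big(c(t_0)-\Lambda(1-\eta_{r_0}(x_0))\big)$ real and $|\mu|\leq C$. Extend $X_0$ near $(x_0,t_0)$ as in \eqref{ext-1}, put $a(x,t)=A_\ijb X^iX^{\bar j}$, so that $\heat a\big|_{(x_0,t_0)}\geq0$ and $\nabla a(x_0,t_0)=0$.

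Now compute $\heat a$ at $(x_0,t_0)$ exactly as in \eqref{equ-A-1}--\eqref{eq-A}, using Lemma \ref{Ricci-evo} for $\partial_tR_\ijb$. As there: the $\Delta R_\ijb$ terms cancel; the curvature-gradient--torsion terms produced by $\partial_tR_\ijb$ cancel those produced by $\Delta(R_\ijb X^iX^{\bar j})$, so no term involving $\nabla Rm$ survives; the identity $A(X_0,\bar Y)=0$ kills the $\partial_tX$-terms and converts $-\tfrac12\eta_{r_0}\big(S^p_XR_{p\bar X}+S^{\bar q}_{\bar X}R_{X\bar q}\big)$ into $-\mu\eta_{r_0}S_{X\bar X}$, which cancels the $-\mu\eta_{r_0}S_{X\bar X}$ coming from $\partial_t$ of $[c(t)-\Lambda(1-\eta_{r_0})]g_\ijb$; and the cut-off cross term $g^{r\bar s}(\nabla_r\eta_{r_0})\nabla_{\bar s}R_{X\bar X}$ is rewritten, via $\nabla a(x_0,t_0)=0$, as $\eta_{r_0}^{-1}\big(-|\nabla\eta_{r_0}|^2(R_{X\bar X}+\Lambda g_{X\bar X})+\eta_{r_0}\mu\,g_{X\bar j}T^{\bar j}_{\bar r\bar X}\big)$, which involves no $\nabla Rm$. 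Using $\eta_{r_0}(x_0)\geq\tfrac12$, $|Rm|+|T|^2\leq K_0$, $\tfrac12 g_0\leq g(t)\leq2g_0$, $|\nabla\eta_{r_0}|^2\leq C_0\eta_{r_0}$ and the Hessian bound, this cross term, the term $\Delta\eta_{r_0}\cdot R_{X\bar X}$, and the $\Lambda(1-\eta_{r_0})S_{X\bar X}$-, torsion-squared-, and $\eta_{r_0}R_{X\bar Xk}\,^pR_p\,^k$-terms are all bounded by $C+O(r_0^{-1})$. What survives is the good term $c'(t_0)g_{X\bar X}=-\tfrac{\delta}{12}c_2K_0e^{-c_2K_0t_0}$, and since $g_{X\bar X}(x_0,t_0)=1$ and $t_0\leq\tilde\tau\leq(c_2K_0)^{-1}$ this is $\leq-\tfrac{\delta c_2K_0}{12e}$. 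Hence
\be
0\leq\heat a\big|_{(x_0,t_0)}\leq-\frac{\delta c_2K_0}{12e}+C+O(r_0^{-1}).
\ee
Choosing $c_2=c_2(n,K_0,\delta)$ with $\tfrac{\delta c_2K_0}{12e}\geq2C$ and then $r_0$ large yields a contradiction, so $A_\ijb<0$ on $M\times\big([0,\tau]\cap[0,\tilde\tau]\big)$ with $\tilde\tau=\min\{c_nK_0^{-1},(c_2K_0)^{-1}\}=\tilde\tau(n,K_0,\delta)$. On $\{\eta_{r_0}=1\}$ this reads $R_\ijb<-c(t)g_\ijb\leq-\tfrac{7\delta}{12}g_\ijb$; letting $r_0\to\infty$, $Ric(g(t))<-\tfrac{7\delta}{12}g(t)<-\tfrac\delta2 g(t)$ on $M\setminus B_{g_0}(p,r+\tfrac34)\supseteq M\setminus B_{g_0}(p,r+1)$, which is the claim.

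The main obstacle is the maximum-principle computation of $\heat a$: controlling the cut-off error terms — in particular the a priori uncontrolled $\nabla Rm$ entering through $\nabla\eta_{r_0}\cdot\nabla R$ — and guaranteeing that the first failure point lies where the cut-off is bounded below. This is precisely the difficulty met in Proposition \ref{al-pre-Ric} and is handled in the same way: the artificial potential $-\Lambda(1-\eta_{r_0})g$ forces $\eta_{r_0}(x_0)\geq\tfrac12$ so one can divide by $\eta_{r_0}$, the first-order condition $\nabla a(x_0,t_0)=0$ trades $\eta_{r_0}\nabla R_{X\bar X}$ for bounded cut-off and torsion terms, and the slowly decaying exponential in $c(t)$ supplies a negative term strong enough to dominate the sign-indefinite reaction term $\eta_{r_0}R_{X\bar Xk}\,^pR_p\,^k$, at the cost of the constraint $\tilde\tau\lesssim(c_2K_0)^{-1}$.
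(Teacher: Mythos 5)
Your overall scheme (first-violation argument, the artificial potential $\Lambda(1-\eta_{r_0})$ forcing $\eta_{r_0}(x_0)\ge\tfrac12$, the null-vector identity, and the use of $\nabla a=0$ to trade $\eta_{r_0}\nabla R_{X\bar X}$ for bounded terms) correctly mirrors Case 1 of Proposition \ref{al-pre-Ric}, and the idea of getting the good term from $c'(t)$ with $c_2$ large is coherent. But there is a genuine gap in the cut-off geometry. Writing $a=\eta_{r_0}\,\rho+(c(t)-\Lambda)g_{X\bar X}$ with $\rho:=R_{X\bar X}+\Lambda g_{X\bar X}$, the operator $\heat$ applied to $a$ produces the term $-\Delta\eta_{r_0}\cdot\rho$, and at the touching point $\rho=(\Lambda-c(t_0))/\eta_{r_0}>0$. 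To close the contradiction you therefore need an \emph{upper} bound on $-\Delta_{g(t)}\eta_{r_0}$, i.e.\ a \emph{lower} bound on $\Delta_{g(t)}\eta_{r_0}=\psi''|\partial d_{g_0}|^2+\psi'\,g^{i\bar j}\partial_i\partial_{\bar j}d_{g_0}$. On the inner transition annulus $B_{g_0}(p,r+\tfrac34)\setminus B_{g_0}(p,r+\tfrac12)$ your plateau cut-off is \emph{increasing} in $d_{g_0}$ ($\psi'\ge0$), so this requires a lower bound on $\partial_i\partial_{\bar j}d_{g_0}$. Lemma \ref{dist-esti} (and Calabi's trick) give only the opposite inequality $\partial_i\partial_{\bar j}d_{g_0}\le C g_0$, valid as an upper barrier; no lower Hessian bound for the distance is available here (the hypotheses of the proposition give only bounded Chern curvature and torsion, not a Riemannian curvature bound, and in any case $\Delta d_{g_0}$ has no lower bound across the cut locus). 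The bound you quote, $\partial_i\partial_{\bar j}\eta_{r_0}\le Cg_0$, is the wrong direction for this term, and the second-derivative test at a non-smooth (cut) point is also not justified on the increasing portion, since Calabi's upper support for $d_{g_0}$ yields an upper, not lower, support for $a$ there. Since $\mu=R_{X\bar X}$ can be positive and the touching point can sit in the inner annulus, this term cannot be dismissed, so the inequality $\heat a<0$ is not established.

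This is exactly why the paper's proof does not use an annular plateau: it fixes an arbitrary $z$ with $B_{g_0}(z,1)$ disjoint from $B_{g_0}(p,r)$ and uses a cut-off $\Phi=\phi(d_{g_0}(\cdot,z))$ that is \emph{non-increasing} in the distance, so only the available one-sided Hessian bound is needed, and compensates for the cut-off error terms (including $\nabla\Phi\cdot\nabla R$, which it bounds directly by the Shi-type estimate $|\nabla Rm|\le Ct^{-1/2}$ of Proposition \ref{Improved-est}) with the barrier $-(\e+2L\sqrt t)g$, whose time derivative $-Lt^{-1/2}$ dominates. Your argument can be repaired along the same lines: run your scheme with the cut-off centered at each such $z$ (so the cut-off is monotone non-increasing in $d_{g_0}(\cdot,z)$ and the artificial potential is unnecessary), or keep your $c(t)$-barrier but replace the plateau by these localized cut-offs; as written, however, the treatment of $\Delta\eta_{r_0}$ on the inner transition region is a genuine gap.
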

\begin{proof}
The proof is standard. For the sake of completeness, we give the proof here. By rescaling, we may assume $\delta=1$. Let $z\in M$ such that $B_{g_0}(z,1)$ is disjoint from $B_{g_0}(p,r)$. Let $\phi$ be a cutoff function on $[0,+\infty)$ such that $\phi\equiv 1$ on $[0,1/2]$, vanishes outside $[0,1]$ and satisfies 
$$|\phi'|^2\leq 100\phi ,\;\; \phi''\geq -100\phi.$$
Let $\Phi=\phi(d_{g_0}(x,z))$ be a cutoff function on $M$. For $\e>0$, consider the $(1,1)$ type tensor $A^\e_{i\bar j}=\Phi(R_{i\bar j}+g_{i\bar j})-(\e+2L\sqrt{t}) g_{i\bar j}$ with $L>>1$. Our goal is to show that for any $\e>0$, $A^\e< 0$ for $t$ sufficiently small independent of $\e$. We will omit the index $\e$ for notational convenient. Clearly by continuity, it holds for $t< \tau(\e, g(t))$ sufficiently small. Let $t_0>0$ be the first such that $A$ fails to be negative. Then at $t=t_0$, there is $x\in B_{g_0}(z,1)$, $X_0\in T^{1,0}_xM$ such that 
$$A_{X_0\bar X_0}=0.$$ 
We may assume $|X_0|=1$ by rescaling. Extends $X_0$ to local vector field $X\in T^{1,0}M$ such that $\nabla X=0$ at $(p,t_0)$. Using the fact that $A_{X_0Y}=0$ for all $Y\in T^{1,0}M$ and the extension,
\begin{equation}
\begin{split}
0&\leq \heat A_{X\bar X}\\
&= \Box A_{i\bar j} \cdot X^i X^{\bar j}\\
&= \Box \left( \Phi R_{i\bar j}+\Phi g_{i\bar j} -\e g_{i\bar j} -2L\sqrt{t}g_{i\bar j} \right) \cdot X^i X^{\bar j}.
\end{split}
\end{equation}
Using the estimate on the cutoff function, Lemma \ref{dist-esti}, curvature assumptions, Proposition \ref{Improved-est} and Lemma \ref{Ricci-evo}, we have 
$$\heat A_{X\bar X} \leq c(n,K_0)t^{-1/2}-Lt^{-1/2}.$$

 Hence if $L>>1$ is large enough, we have got contradiction. By letting $\e\rightarrow 0$, we have shown that there is $L(n,K_0)>0$ such that for any $t\in [0,\tau]$, $x$ outside $B_{g_0}(p,r+1)$, 
$$R_{i\bar j} \leq (-1+L\sqrt{t})g_{i\bar j}.$$
The assertion follows by choosing $\tilde \tau$ small enough.
\end{proof}

Now we are ready to prove the existence of \KE metric.
\begin{thm}\label{KE-exist}
Suppose $(M,g_0)$ is a complete noncompact Hermitian manifold with
$$\sup_M \left\{ |Rm(g_0)|+|T_{g_0}|^2+|Rm^L(g_0)|\right\}<+\infty$$
 and non-positive bisectional curvature. If in addition there is a compact set $\Omega$, $\delta>0$ such that outside $\Omega$, $Ric(g_0)<-\delta g_0$. Then there is a  K\"ahler-Einstein metric $g_{KE}=-Ric(g_{KE})$ on $M$. Furthermore, the curvature tensor $Rm$ of $g_{KE}$ and all its covariant derivatives are bounded.
\end{thm}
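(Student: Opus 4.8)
The plan is to use the Hermitian Ricci flow as a smoothing device: flow $g_0$ for a short time, take $-\Ric$ of the evolved metric to obtain a complete \K background of bounded geometry, and then solve a complex Monge--Amp\`ere equation on that background. First I would apply Theorem \ref{Improved-short-time}: since $g_0$ has bounded Riemannian curvature there is an exhaustion function $\rho\ge 1$ with $\sup_M\bigl(|\p\rho|^2+|\ddb\rho|\bigr)<\infty$, so hypotheses \textbf{(A)} and \textbf{(B)} hold for some $K_0>0$, and \eqref{HRF} has a solution $g(t)$ on $M\times[0,c_nK_0^{-1}]$, uniformly equivalent to $g_0$, with $|Rm(g)|+|T_g|^2\le 4K_0$ and, by Proposition \ref{Improved-est}, $|\nabla^m Rm(g)|+|\nabla^m T_g|\le C(n,m,K_0)\,t^{-m/2}$ for every $m$.

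Next I would build the background metric. As $g_0$ has non-positive bisectional curvature its Chern--Ricci curvature is $\le 0$, which together with $\Ric(g_0)<-\delta g_0$ off $\Omega$ makes $\Ric(g_0)$ quasi-negative; hence Theorem \ref{main-thm} gives $\tau'>0$ with $\Ric(g(t))<0$ on $M\times(0,\tau']$. Fix a small $t_1\in(0,\tau']$ and set $h_0:=-\Ric(g(t_1))$; this is a positive $(1,1)$-tensor, and it is \K since the Chern--Ricci form is $d$-closed. By Proposition \ref{inf-beh}, after shrinking $t_1$ we get $\Ric(g(t_1))<-\tfrac{\delta}{2}g(t_1)$ outside some $g_0$-ball $B$, while $\tfrac12 g_0\le g(t_1)\le 2 g_0$ on $M$; since $\overline B$ is compact and $h_0$ is smooth and positive definite on it, $c\,g_0\le h_0\le C\,g_0$ on all of $M$, so $(M,h_0)$ is complete and uniformly equivalent to $g_0$. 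Moreover, by the first paragraph $g(t_1)$ has bounded curvature and torsion together with all their covariant derivatives, hence bounded geometry of infinite order, and the same then holds for $h_0$, which is a fixed tensorial expression in $g(t_1)$ and its first two covariant derivatives and is uniformly equivalent to it.

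Finally I would run the standard complex Monge--Amp\`ere machinery of Cheng--Yau \cite{ChengYau1980} (see also \cite{Wu2008}). Set $f:=\log\!\bigl(\det g(t_1)/\det h_0\bigr)$, a globally defined smooth function, bounded with bounded covariant derivatives by the bounds just obtained; since locally $\Ric(\omega_{h_0})=-\ddb\log\det h_0$ and $\omega_{h_0}=-\Ric(\omega_{g(t_1)})=\ddb\log\det g(t_1)$, one has $\ddb f=\Ric(\omega_{h_0})+\omega_{h_0}$. Any solution of
\begin{equation}
\bigl(\omega_{h_0}+\ddb u\bigr)^n=e^{u+f}\,\omega_{h_0}^n,\qquad \omega_{h_0}+\ddb u>0
\end{equation}
then gives $\Ric\bigl(\omega_{h_0}+\ddb u\bigr)=-\bigl(\omega_{h_0}+\ddb u\bigr)$, and on a complete \K manifold of bounded geometry of infinite order with bounded $f$ such a solution $u$ exists, is bounded with all covariant derivatives bounded, and yields a metric uniformly equivalent to $\omega_{h_0}$: openness is the implicit function theorem in bounded-geometry H\"older spaces, and closedness follows from the maximum principle (the $e^u$ term makes the $C^0$ bound immediate), a Yau-type second-order estimate, and Schauder estimates. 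Then $g_{KE}:=\omega_{h_0}+\ddb u$ is complete, uniformly equivalent to $g_0$, satisfies $\Ric(g_{KE})=-g_{KE}$, and has $Rm$ and all covariant derivatives bounded. The main obstacle, and where the hypotheses are really spent, is the second step: producing $h_0=-\Ric(g(t_1))$ as a \emph{complete} metric --- which is exactly what the negativity $\Ric(g_0)<-\delta g_0$ near infinity buys, through Proposition \ref{inf-beh} --- that is simultaneously of \emph{bounded geometry of infinite order}, which rests on the instantaneous higher-order smoothing of Proposition \ref{Improved-est}. Once that is in place the Monge--Amp\`ere step is routine.
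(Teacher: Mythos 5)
Your proposal follows essentially the same route as the paper: run the Hermitian Ricci flow via Theorem \ref{Improved-short-time}, use Theorem \ref{main-thm} together with Proposition \ref{inf-beh} to make $h=-\Ric(g(t_1))$ a complete \K metric uniformly equivalent to $g_0$ with all covariant derivatives controlled, and then solve the resulting complex Monge--Amp\`ere problem $-\Ric(h)=h+\ddb F$ with $F$ bounded with bounded derivatives. The only difference is in the final step, where the paper simply cites \cite{LottZhang2011} (see also \cite{ChauLee2019,HuangLeeTam2019}) while you sketch the Cheng--Yau continuity-method argument directly; both yield the same conclusion.
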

\begin{proof}
By Theorem \ref{Improved-short-time}, there is a short-time solution $g(t)$ starting from $g(0)=g_0$ on $M\times [0,\tau]$ with bounded Chern curvature and torsion. By Theorem \ref{main-thm} and Proposition \ref{inf-beh}, there is another Hermitian metric $g(\tau)$ with $$-C g(\tau)\leq Ric(g(\tau))<-\sigma g(\tau)$$ for some $\sigma,C>0$. Let $h=-Ric(g(\tau))$. As the Chern-Ricci curvature is $d$-closed,  $h$ is a complete \K metric uniformly equivalent to $g(\tau)$. Moreover, the higher order estimate in Theorem \ref{Improved-short-time} implies that for all $m\in \mathbb{N}$, there is $C(n,m,K_0)>0$ such that 
\begin{align}\label{higher-order-est}|\nabla_{g(\tau)}^m h|\leq C(n,m,K_0).
\end{align}
Rewrite
$$-Ric(h)=-Ric(g(\tau)) +\ddb \log \frac{\det h}{\det g(\tau)}=h+\ddb F.$$

By \eqref{higher-order-est}, all the covarient derivatives of $F$ with respect to $h$ are bounded. By \cite[Theorem 5.1]{LottZhang2011}, there is a complete \KE metric $g_{KE}=-Ric(g_{KE})$ on $M$, see also \cite{ChauLee2019,HuangLeeTam2019}. Furthermore, by Shi's estimate \cite{Shi1989} or Proposition \ref{Improved-est}, the curvature tensor of $g_{KE}$ and all its covariant derivatives are bounded. 
\end{proof}

\end{document}